\DeclareMathAlphabet{\mathcalligra}{T1}{calligra}{m}{n}
\DeclareFontShape{T1}{calligra}{m}{n}{<->s*[2.2]callig15}{}
\theoremstyle{plain}
\newtheorem{thm}{Theorem}[section]
\newtheorem{conj}[thm]{Conjecture}
\newtheorem{lemma}[thm]{Lemma}
\newtheorem*{thm*}{Theorem}
\newtheorem*{question*}{Question}
\newtheorem*{conj*}{Conjecture}
\newtheorem*{cor*}{Corollary}
\newtheorem*{prop*}{Proposition}
\newtheorem{prop}[thm]{Proposition}
\newtheorem{cor}[thm]{Corollary}
\theoremstyle{definition}
\newtheorem{defi}[thm]{Definition}
\theoremstyle{remark}
\newtheorem{rmk}[thm]{Remark}
\numberwithin{equation}{section}
\newcommand{\overbar}[1]{\mkern 1.5mu\overline{\mkern-1.5mu#1\mkern-1.5mu}\mkern 1.5mu}
\DeclareMathOperator{\Br}{Br}
\DeclareMathOperator{\End}{End}
\DeclareMathOperator{\Pic}{Pic}
\DeclareMathOperator{\Hom}{Hom}
\DeclareMathOperator{\Gal}{Gal}
\DeclareMathOperator{\Spec}{Spec}
\DeclareMathOperator{\im}{Im}
\DeclareMathOperator{\Gl}{GL}
\newcommand{\MT}{\operatorname{MT}}
\newcommand{\Hdg}{\operatorname{Hdg}}
\newcommand{\Aut}{\operatorname{Aut}}
\newcommand{\NS}{\operatorname{NS}}
\newcommand{\N}{\mathbb{N}}
\newcommand{\Z}{\mathbf{Z}}
\newcommand{\Q}{\mathbf{Q}}
\newcommand{\A}{\mathbf{A}}
\newcommand{\Oo}{\mathcal{O}}
\newcommand{\et}{\textrm{\'{e}t}}
\newcommand{\C}{\mathbf{C}}
\newcommand{\adjunction}[4]{\xymatrix@1{#1{\ } \ar@<-0.3ex>[r]_{ {\scriptstyle #2}} & {\ } #3 \ar@<-0.3ex>[l]_{ {\scriptstyle #4}}}}
\title{Neighbors and arithmetic of isogenous K3 surfaces}
\author{Domenico Valloni}
\begin{document}
\maketitle

\begin{abstract}
We use lattice theory to study the isogeny class of a K3 surface. Starting from isotropic Brauer classes, we construct isogenies via Kneser method of neighboring lattices. We also determine the fields of definition of isogenous K3 surfaces, and study Kneser construction over number fields. We then apply our results to relate conjectures about the finiteness of Brauer groups and N\'{e}ron-Severi lattices of K3 surfaces.
\end{abstract}

\section{Introduction} 
Two complex K3 surfaces $X$ and $Y$ are said to be \textit{isogenous} if there exists a Hodge isometry $f \colon \mathrm{T}(Y,\Q) \xrightarrow{\sim} \mathrm{T}(X,\Q)$ between their rational transcendental lattices. Isogenies between K3 surfaces have been studied since Mukai's seminal work \cite{MR893604} on the moduli spaces of stable sheaves on K3 surfaces, and his ideas eventually led to a proof that every rational Hodge isometry is algebraic (see also \cite{NikulinHodgeK3} and \cite{Buskin} for the cases not considered by Mukai, and \cite{MR4014777} where isogenies are studied via twisted derived equivalences). 
Our first aim in this paper, is to apply a lattice-theoretical method due to Kneser to construct isogenies. In fact, it is difficult in general to produce K3 surfaces that are isogenous to a given $X$: Mukai's construction of isogenous K3 surfaces as moduli spaces can only yield finitely many new K3 surfaces, corresponding to transcendental lattices that contain $\mathrm{T}(X)$ with finite index and cyclic cokernel. Given an even lattice $L$, Kneser method of neighboring lattices allows one to construct new lattices $L' \subset L \otimes \Q$ that belong to the same genus of $L$ starting from isotropic lines in $L \otimes \Z/d\Z$, which we interpret as cyclic subgroups of the relevant Brauer group. To explain his construction, consider $\mathrm{T} \subset \mathrm{T}(X)$ a sublattice such that $\mathrm{T}(X)/\mathrm{T} \cong \Z / d \Z$ and such that $d$ and $\mathrm{discr}(\mathrm{T}(X))$ coprime to each other. The sublattice $\mathrm{T}$ is said to be \textit{split} if its discriminant form $A_\mathrm{T} \coloneqq \mathrm{T}^\vee/ \mathrm{T}$ splits as $A_\mathrm{T} = A_X \oplus \Z/d\Z \oplus \Z/d \Z$, where $A_X \cong \mathrm{T}(X)^\vee/ \mathrm{T}(X)$ and $\mathrm{T}^\vee$ denotes the dual of $\mathrm{T}$. The results of Kneser can be formulated as a natural one-to-one correspondence between
\begin{enumerate}
    \item Isotropic lines in $\mathrm{T}(X) \otimes \Z / d \Z \cong \Br(X)[d]$;
    \item Split sublattices of index $d$ in $\mathrm{T}(X)$;
    \item And $d$-neighbors of $\mathrm{T}(X)$, that is, even lattices $\mathrm{T} \subset \mathrm{T}(X,\Q)$ such that $$\mathrm{T}(X) / \mathrm{T}(X) \cap \mathrm{T} \cong \mathrm{T} / \mathrm{T}(X) \cap \mathrm{T} \cong \Z/d\Z.$$
\end{enumerate}
An isotropic line is by definition a cyclic subgroup $\Z/ d \Z \subset \mathrm{T}(X) \otimes \Z / d \Z$ which is isotropic with respect to the natural perfect pairing on $\mathrm{T}(X) \otimes \Z / d \Z$. It is a non-trivial result of Kneser that $d$-neighbors always belong to the same genus (see Theorem \ref{Kneser thm}). 

 In \ref{more neighbors} we generalize this correspondence to the case when $d$ and $\mathrm{discr}(\NS(X))$ are not coprime. Thanks to the global Torelli theorem for K3 surfaces \cite{MR0284440}, to any complex K3 surface $X$ and any isotropic line $\ell \subset \Br(X)[d]$ we associate a well-defined K3 surface $X_\ell$. This comes with an isogeny $f_\ell \colon \mathrm{T}(X_\ell, \Q) \rightarrow \mathrm{T}(X, \Q)$ such that the image of $\mathrm{T}(X_\ell)$ is the corresponding neighbor of $\mathrm{T}(X)$, and such that $\NS(X_\ell) \cong \NS(X)$. We call this K3 surface the $d$-neighbor of $X$ associated to $\ell$. When $\End_{\Hdg}(\mathrm{T}(X)_\Q) = \Q$ or a totally real field, then $X_\ell \cong X_{\ell'}$ if and only if $\ell = \ell'$. When $\rho(X) \geq 3$, we also construct another K3 surface $Y_\ell$, this time well-defined up to derived equivalence, endowed with a Hodge isometry $g_\ell \colon \mathrm{T}(Y_\ell, \Q) \rightarrow \mathrm{T}(X,\Q)$ such that $g_\ell(\mathrm{T}(Y_\ell)) = f_\ell(\mathrm{T}(X_\ell)) \cap \mathrm{T}(X)$ is the associated $d$-split sublattice. In particular $\mathrm{discr}(\NS(Y_\ell)) = d^2 \cdot \mathrm{discr}(\NS(X))$. When the K3 surface $Y_\ell$ exists, both the original $X$ and its neighbor $X_\ell$ can be interpreted as moduli spaces of sheaves on some Fourier-Mukai partner of $Y_\ell$ (\ref{prop Y}). 
 
 Although we do not have a precise geometric interpretation of this construction, we believe that `taking neighbors' is a very natural operation for K3 surfaces, which in some sense is the analogue of taking quotients of abelian varieties. For instance, we show that for a Kummer surface $X = \mathrm{Km}(E_1 \times E_2)$, where $E_1, E_2$ are two non-isogenous, non-CM elliptic curves, and for any odd $d$, the $d$-neighbors of $X$ are in one-to-one correspondence to $\mathrm{Km}(E_1/C_1 \times E_2/C_2)$, where $C_1 \cong C_2 \cong \Z/d\Z$.  
 
Next, we study the surfaces $X_\ell$ and $Y_\ell$ over number fields. In fact, we prove a more general criterion that allows us to understand whether an isogenous K3 surface is defined over a given number field:
\begin{thm} \label{main thm introduction}
Let $K \subset \C$ be a number field and let $X/K$ be a K3 surface. Denote by $\rho_X$ the Galois representation associated to $\mathrm{T}(\overline{X}, \A_f) = \mathrm{T}(\overline{X}, \Q) \otimes \widehat{\Z}$. Let $Y$ be a complex K3 surface and let $f \colon \mathrm{T}(Y,\Q) \rightarrow \mathrm{T}(X_\C,\Q)$ be a Hodge isometry. Then, the K3 surface $Y$, the Hodge isometry $f$, and $\NS(Y)$ can defined over $K$ if and only if:
\begin{enumerate}
    \item The $\widehat{\Z}$-lattice $f(\mathrm{T}(Y)) \otimes \widehat{\Z}$ is stable under $\rho_X$;
    \item The induced $G_K$ action on $A_Y \coloneqq \mathrm{T}(Y)^\vee / \mathrm{T}(Y)$ is trivial.
\end{enumerate}
\end{thm}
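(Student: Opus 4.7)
Suppose $Y$ admits a model $Y_K/K$ such that $f$ and $\NS(Y)$ are defined over $K$. The \'etale--Betti comparison gives a canonical $G_K$-action on $H^2(Y_{\overline{K},\et},\widehat{\Z}(1))\cong H^2(Y(\C),\Z)\otimes\widehat{\Z}$. Since every class in $\NS(\overline{Y_K})$ is $K$-rational, this action is trivial on $\NS(Y)\otimes\widehat{\Z}$, and therefore preserves the orthogonal complement $\mathrm{T}(Y,\widehat{\Z})$; pushing forward through the Galois-equivariant $f$ matches it with the restriction of $\rho_X$ to $f(\mathrm{T}(Y))\otimes\widehat{\Z}$, which is (1). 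The standard anti-isomorphism $A_Y\cong A_{\NS(Y)}$ arising from the finite-index embedding $\mathrm{T}(Y)\oplus \NS(Y)\hookrightarrow H^2(Y,\Z)$ then forces the induced action on $A_Y$ to be trivial, which is (2).

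\textbf{Sufficiency, construction of a coherent Galois action.} The plan is to build a $G_K$-equivariant level structure on $Y$ from (1) and (2), and then descend through a moduli space. First, use (1) to transport $\rho_X$ through $f^{-1}$ to a $G_K$-action on $\mathrm{T}(Y,\widehat{\Z})$; on $\NS(Y)\otimes\widehat{\Z}$ declare the action to be trivial. By (2), both induced actions on the common discriminant group $A_Y\otimes\widehat{\Z}\cong A_{\NS(Y)}\otimes\widehat{\Z}$ are trivial, so they agree; hence they glue through $\mathrm{T}(Y)\oplus \NS(Y)\hookrightarrow H^2(Y,\Z)$ to a well-defined $G_K$-action $\rho_Y$ on $H^2(Y,\widehat{\Z}(1))$.

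\textbf{Descent step.} Fix an ample class $h\in \NS(Y)$ of degree $2d$ and, for $N$ large enough, an $N$-level structure $\alpha$ on $Y$. Then $(Y,h,\alpha)$ defines a complex point of the moduli stack $\mathcal{M}_{2d,N}$ of polarized K3 surfaces of degree $2d$ with $N$-level structure. By the work of Rizov and Madapusi--Pera, $\mathcal{M}_{2d,N}$ admits a canonical model over $\Q$ whose $F$-points, for $F\subset\C$ a number field, parametrize polarized K3 surfaces with level structure defined over $F$. The Galois action $\rho_Y$ constructed above expresses exactly that $\alpha$ is $G_K$-equivariant, which is equivalent to saying that $(Y,h,\alpha)$ descends to a $K$-rational point of $\mathcal{M}_{2d,N}$. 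This yields the desired model $Y_K/K$. The triviality of $\rho_Y$ on $\NS(Y)\otimes\widehat{\Z}$ implies that every class in $\NS(\overline{Y_K})$ is $G_K$-invariant and hence defined over $K$; and $f$, being $G_K$-equivariant by (1), is defined over $K$ as a morphism of Galois modules in the required sense.

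\textbf{Main obstacle.} The crux is the descent step, which is far from elementary: it rests on the realization of polarized K3 surfaces with level structure as a Shimura variety of orthogonal type admitting a canonical model over $\Q$, together with the characterization of its $K$-rational points in terms of Galois-equivariant level structures. The role of condition (2) is precisely to ensure compatibility of the two partial Galois actions across the discriminant form; without it, the natural data on $\mathrm{T}(Y,\widehat{\Z})$ and $\NS(Y)\otimes\widehat{\Z}$ would fail to assemble into a coherent $G_K$-equivariant level structure, and no $K$-model would be produced.
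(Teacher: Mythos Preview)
Your necessity argument is correct. The sufficiency argument, however, has a genuine gap at exactly the place you label the ``main obstacle''.

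You build an abstract $G_K$-action $\rho_Y$ on $\mathrm{H}^2(Y,\widehat{\Z}(1))$ by gluing the $f^{-1}$-transport of $\rho_X$ on $\mathrm{T}(Y,\widehat{\Z})$ with the trivial action on $\NS(Y)\otimes\widehat{\Z}$; condition~(2) is correctly identified as what makes this gluing consistent across the discriminant form. But $\rho_Y$ is, at this stage, purely linear-algebraic: nothing ties it to the \emph{geometric} Galois action on the conjugates $Y^\sigma$. The sentence ``$\alpha$ is $G_K$-equivariant, which is equivalent to $(Y,h,\alpha)$ descending to a $K$-rational point of $\mathcal{M}_{2d,N}$'' is circular as written. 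The description of $\mathcal{M}_{2d,N}(K)$ you invoke says that a $K$-point \emph{is} a polarized K3 surface over $K$ with level structure over $K$; so ``$\alpha$ is $G_K$-equivariant'' only makes sense relative to the Galois action coming from an already-existing $K$-model of $Y$, which is precisely what you are trying to produce. Equivalently, to show that the $\C$-point $(Y,h,\alpha)$ lies in $\mathcal{M}_{2d,N}(K)$ you must show $(Y^\sigma,h^\sigma,\alpha^\sigma)\cong(Y,h,\alpha)$ for every $\sigma\in G_K$, and you have not produced any isomorphism $Y\cong Y^\sigma$ at all. The canonical model of the orthogonal Shimura variety does not give an intrinsic formula for the Galois conjugate of a non-special point in terms of adelic data alone, so the appeal to Rizov and Madapusi--Pera does not close this gap.

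The paper fills this gap by an explicit descent-datum construction. Since rational Hodge isometries between transcendental lattices of K3 surfaces are absolute Hodge (motivated) classes, each $\sigma\in G_K$ yields a conjugate Hodge isometry $f^\sigma\colon \mathrm{T}(Y^\sigma,\Q)\to \mathrm{T}(X_\C,\Q)$ satisfying $\sigma_{X,\mathrm{tr}}^*\circ f^\sigma=f\circ\sigma_{Y,\mathrm{tr}}^*$. Condition~(1) forces $f^\sigma(\mathrm{T}(Y^\sigma))=f(\mathrm{T}(Y))$, so $(f^\sigma)^{-1}\circ f\colon \mathrm{T}(Y)\to \mathrm{T}(Y^\sigma)$ is an \emph{integral} Hodge isometry. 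Condition~(2) is then used, via Nikulin's extension criterion (Proposition~\ref{Nikulin extension}), to glue $(f^\sigma)^{-1}\circ f$ with $(\sigma_{Y,\mathrm{alg}}^*)^{-1}$ on the N\'eron--Severi side into a Hodge isometry $\mathrm{H}^2_{\mathrm{sing}}(Y,\Z)\to \mathrm{H}^2_{\mathrm{sing}}(Y^\sigma,\Z)$. The global Torelli theorem converts this into an honest isomorphism $g_\sigma\colon Y\xrightarrow{\sim}Y^\sigma$; one verifies the cocycle condition for $\sigma\mapsto g_\sigma$, and standard Galois descent for quasi-projective varieties yields the model over $K$ with the required properties. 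The passage through $Y^\sigma$ and Torelli is exactly the missing ingredient in your argument; the moduli-theoretic packaging does not allow you to bypass it.
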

By (2) we mean the following: since $f(\mathrm{T}(Y)) \otimes \widehat{\Z}$ is stable under $G_K = \Gal(\overline{K}/K)$ by point (1), there is an induced Galois representation on $$\frac{(f(\mathrm{T}(Y)) \otimes \widehat{\Z})^{\vee}}{f(\mathrm{T}(Y)) \otimes \widehat{\Z}} \cong A_Y,$$ and we require this to be trivial. We remark that $\NS(Y)$ and $f$ can be defined after a field extension of degree bounded independently from the K3 surfaces involved ( \ref{lemma Neron-Severi} and \ref{prop def hodge isom}). This means that the difference between a smaller field of definition of $Y$ and a field where $Y$, $f$, and $\NS(Y)$ are simultaneously defined can be universally bounded (\ref{corollary minimal field}). We also remark that point (2) is the only difference between the analogue result for abelian varieties, and it is also the most difficult thing to verify in practice. Regarding neighboring K3 surfaces, we have the following corollary:
\begin{cor}
Let $K$ be a number field and let $X/K$ be a K3 surface such that $\NS(\overline{X})^{G_K} = \NS(\overline{X})$. Let $\ell \subset \Br(\overline{X})[d]$ be an isotropic line. 
\begin{enumerate}
    \item If $\ell$ is $G_K$-stable, then $(X_\ell,f, \NS(X_\ell))$ can be defined over $K$;
    \item It $G_K$ acts trivially on $\ell$ and $\rho_{X} \geq 3$, then $(Y_\ell,g_\ell, \NS(Y_\ell))$ can be defined over $K$. 
\end{enumerate}
\end{cor}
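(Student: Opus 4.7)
The plan is to deduce both assertions from Theorem \ref{main thm introduction}, applied respectively to the pairs $(Y,f) = (X_\ell, f_\ell)$ and $(Y,f) = (Y_\ell, g_\ell)$. The common preliminary observation is that the hypothesis $\NS(\overline{X})^{G_K} = \NS(\overline{X})$ forces the $G_K$-action on the discriminant group $A_X$ to be trivial: since $\mathrm{T}(\overline{X})$ and $\NS(\overline{X})$ are orthogonal complements in the unimodular K3 lattice, there is a canonical Galois-equivariant isomorphism (up to a sign) $A_X \cong A_{\NS(\overline{X})}$, and the assumed trivial action on $\NS(\overline{X})$ descends to its discriminant group.

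For part (1) I would verify conditions (1) and (2) of the main theorem for $(X_\ell, f_\ell)$. Condition (1) is automatic from the canonicity of the Kneser correspondence reviewed in \ref{more neighbors}: the split sublattice $\mathrm{T} \subset \mathrm{T}(\overline{X})$ and its $d$-neighbor $f_\ell(\mathrm{T}(X_\ell))$ are functorially determined by the isotropic line $\ell$, so $G_K$-stability of $\ell$ transports to both lattices and hence to their $\widehat{\Z}$-completions. For condition (2) I would use the hypothesis that $d$ and $\mathrm{discr}(\mathrm{T}(X))$ are coprime, which makes the splitting $A_\mathrm{T} = A_X \oplus H$ with $H \cong \Z/d\Z \oplus \Z/d\Z$ coincide with the primary decomposition of $A_\mathrm{T}$; this decomposition is therefore canonical and Galois-equivariant. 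Writing $\ell^*$ for the isotropic complement of $\ell$ inside $H$, a direct calculation inside $A_\mathrm{T}$ identifies $A_{X_\ell} \cong (\ell^*)^\perp / \ell^* \cong A_X$ as Galois modules, and condition (2) follows from the preliminary observation.

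For part (2), $g_\ell(\mathrm{T}(Y_\ell))$ equals the split sublattice $\mathrm{T}$, so condition (1) again reduces to the $G_K$-stability of $\ell$. The analysis of condition (2) involves the full discriminant $A_{Y_\ell} \cong A_\mathrm{T} = A_X \oplus H$. Triviality on $A_X$ is as above; for the summand $H = \ell \oplus \ell^*$, the stronger hypothesis that $G_K$ acts trivially on $\ell$, combined with the Galois-equivariance of the perfect hyperbolic pairing on $H$, forces the action on the dual line $\ell^*$ to be by the inverse character, hence also trivial. Thus the action on $A_{Y_\ell}$ is trivial and the main theorem applies.

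The main technical obstacle I anticipate is justifying rigorously that the identifications $A_{X_\ell} \cong A_X$ in part (1) and $A_{Y_\ell} \cong A_\mathrm{T}$ in part (2) are genuinely Galois-equivariant rather than merely abstract isomorphisms of groups. This is precisely where the coprimality of $d$ and $\mathrm{discr}(\mathrm{T}(X))$ plays the crucial role: it canonically characterizes the $A_X$-summand inside $A_\mathrm{T}$ as the prime-to-$d$ part, and it is the presence of this canonical Galois-stable splitting that allows the action on $\ell^*$ to be deduced from that on $\ell$ through the pairing.
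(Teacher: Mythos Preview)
Your proposal is correct and follows essentially the same route as the paper. The paper's proof is terser only because it has already packaged the discriminant-form computations you spell out: for part (1) it cites the natural isomorphism $A_{L'}\cong A_L$ established in Proposition~\ref{gen knes} (your computation $(\ell^*)^\perp/\ell^*\cong A_X$ is exactly the content of that naturality together with Lemma~\ref{Lemma on iso of discriminant forms}(2)), and for part (2) it invokes Lemma~\ref{gen lemma transverse} to identify the two cyclic summands of $A_{\ker(\ell)}$ with $\ell$ and its dual, which is precisely your pairing argument that the action on $\ell^*$ is by the inverse character.

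One minor remark on labelling: in the paper's conventions the isotropic subgroup of $A_{\ker(\ell)}$ corresponding to the overlattice $f_\ell(\mathrm{T}(X_\ell))$ is naturally identified with $\ell$ itself (via $v\mapsto \tfrac{1}{d}v \bmod \ker(\ell)$), while its transverse $H=\mathrm{T}(X)/\ker(\ell)$ is naturally $\Hom(\ell,\Q/\Z)$; your $\ell$ and $\ell^*$ appear to be swapped relative to this. The argument is of course symmetric in the two pieces, so nothing is affected, but aligning the notation with the explicit construction in the proof of Proposition~\ref{prop line map} would make the Galois-equivariance of each identification transparent rather than something to be checked.
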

We also apply Theorem \ref{main thm introduction} to understand wheter a K3 surface $X/K$ that is geometrically a moduli space of sheaves on some other K3 surface, is also a moduli space over $K$, see Corollary \ref{corollary moduli space} and the discussion that precedes it. 

In the last part of the paper, we give some applications of our results. 
\begin{conj}[Shafarevich]  \label{Sha conj}
Only finitely many lattices can appear as $\NS(\overline{X})$ where $X$ is any K3 surface over any number field of given degree. 
\end{conj}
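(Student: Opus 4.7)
The plan is to reduce Shafarevich's conjecture to a pair of finiteness statements that the neighbor machinery of the paper controls separately. Fix a degree bound $n \geq 1$ for $[K:\Q]$. I would stratify the collection of possible lattices $\NS(\overline{X})$ by the geometric isogeny class of $X$, then argue finiteness within each stratum and finiteness of strata.

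First I would show that only finitely many $\overline{K}$-isogeny classes of K3 surfaces admit a model defined over some number field of degree $\leq n$. Via the Kuga-Satake construction and Madapusi Pera's extension of Faltings to K3s, such an isogeny class is controlled by the isogeny class of an associated polarized abelian variety of bounded dimension and weight, and Faltings' theorem yields finiteness there. This step is known in large generality and reduces the problem to bounding the number of lattices $\NS(\overline{Y})$ within a single isogeny class, as $Y$ ranges over K3s defined over number fields of degree $\leq n$.

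Next, I would translate the intra-class count into a count of sublattices. Fix a representative $X$ of the isogeny class. Any isogenous $Y$ yields a rational Hodge isometry $f \colon \mathrm{T}(Y,\Q) \xrightarrow{\sim} \mathrm{T}(X,\Q)$, and the isomorphism class of $\NS(\overline{Y})$ is determined, up to finite ambiguity inside the K3 lattice, by the integral sublattice $f(\mathrm{T}(Y)) \subset \mathrm{T}(X,\Q)$. By the generalized neighbor correspondence stated in the introduction, any such sublattice is joined to $\mathrm{T}(X)$ by a finite chain of $d_i$-neighbor operations, each parametrized by an isotropic line in the Brauer group of an intermediate K3 surface. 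Theorem \ref{main thm introduction} then forces any step occurring in a chain that lands on an arithmetic $Y/K$ to correspond to a $G_K$-stable isotropic line, up to a bounded auxiliary extension controlled by the action on the discriminant form.

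The hard part will be to bound the pool of such $G_K$-stable isotropic Brauer classes uniformly in $X$ and in $K$ of degree $\leq n$. This is precisely a uniform boundedness statement for arithmetic Brauer groups of K3 surfaces, in the spirit of the conjectures of V\'arilly-Alvarado and Skorobogatov-Zarhin; granting it, only finitely many neighbor chains originate from $X$ over such $K$, and hence only finitely many $\NS(\overline{Y})$ appear. The plan therefore does not give an unconditional proof of Shafarevich, but recasts it as a consequence of (i) Faltings-type finiteness of isogeny classes for K3s of bounded arithmetic complexity and (ii) uniform boundedness of the Galois-stable part of the geometric Brauer group; obstacle (ii) is where the essential difficulty lies.
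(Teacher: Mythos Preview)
The statement you are trying to prove is labelled \emph{Conjecture} in the paper and is used only as a hypothesis in Theorem~\ref{Thm implications}; the paper does not prove it, so there is no proof to compare against. More importantly, your plan cannot work as stated.

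Your first step claims that only finitely many geometric isogeny classes of K3 surfaces admit a model over some number field of degree $\leq n$. This is false already for $n=1$: there are continuous families of K3 surfaces over $\Q$ (for instance quartic surfaces in $\PP^3_\Q$, or Kummer surfaces $\mathrm{Km}(E_1\times E_2)$ with varying $j$-invariants), and the generic members of such families lie in pairwise distinct isogeny classes since their rational transcendental lattices carry inequivalent Hodge structures. Faltings-type results, whether for abelian varieties or for K3 surfaces via Kuga--Satake, bound the number of isomorphism classes \emph{within a fixed isogeny class} (or the number of isogeny classes with good reduction outside a fixed finite set of places), not the number of isogeny classes over all of $\Q$ with no reduction constraint. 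Without a bound on the number of strata your stratification is vacuous.

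Your second step also inverts the logic of the paper. Theorem~\ref{Thm implications} shows that \emph{assuming} Shafarevich one obtains uniform bounds on level structures in the Brauer group, via the auxiliary surfaces $Y_\ell$ whose discriminants grow with $d$. You are attempting the converse: assuming a uniform Brauer bound, deduce Shafarevich. Even granting (ii), the neighbor machinery only moves between K3 surfaces with isometric N\'eron--Severi lattices (for $X_\ell$) or with discriminants differing by a square factor (for $Y_\ell$); it does not let you reach an arbitrary $\NS(\overline{Y})$ in the isogeny class by a chain of such moves, so the claimed reduction of the intra-class count to isotropic Brauer lines is not justified.
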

Coleman conjectured an analogue of \ref{Sha conj} for abelian varieties, where the N\'{e}ron-Severi lattice is replaced by the endomorphism ring of the variety. It is known that Colemann conjecture implies a uniform bound of $|A(K)_{\mathrm{tors}}|$ for abelian varieties of bounded dimension over fields of bounded degree; as well as of the size of their isogeny classes \cite[Theorem 1.1.]{MR3815154}. In \cite{wrap131525}, the authors show that Colemann conjecture also bounds $|\Br(\overline{X})^{G_K}|$ uniformily for abelian varieties and K3 surfaces. On the other hand, there are no known relations between Shafarevich conjecture and the boundeness of the Brauer groups of K3 surfaces. 
\begin{conj}[Folklore] \label{conj finiteness isogenous}
Let $N$ be an integer and let $X/ \overline{\Q}$ be a K3 surface. Then, the number of isomorphism classes of K3 surfaces isogenous to $X$ that can be defined over extensions of degree at most $N$ can be bounded only in terms on $N$.
\end{conj}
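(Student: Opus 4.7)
My plan is to combine Theorem \ref{main thm introduction} with a lattice-counting argument driven by the Galois representation $\rho_X$ on the adelic transcendental lattice. Set $V \coloneqq \mathrm{T}(X_\C,\Q)$. By the theorem of Mukai--Nikulin--Buskin mentioned in the introduction, every K3 surface $Y$ isogenous to $X$ is determined, up to the action of $\Aut_{\mathrm{Hdg}}(V)$, by the data of a $\Z$-lattice $T \subset V$ of the form $f(\mathrm{T}(Y))$ for some Hodge isometry $f \colon \mathrm{T}(Y,\Q) \xrightarrow{\sim} V$. Bounding isogenous K3 surfaces definable over a field of degree $\leq N$ thus reduces to bounding the number of such $T \subset V$ admitting an open subgroup $H \leq G_\Q$ of index at most $N$ that satisfies both conditions (1) and (2) of Theorem \ref{main thm introduction}.

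The attack proceeds in two steps. First, bound the discriminant $|T^\vee/T|$ purely in terms of $N$. Condition (2) forces $\rho_X(H)$ to lie inside the principal congruence subgroup $\Gamma_T \subseteq \widehat{O}(V \otimes \widehat{\Z})$ cut out by trivial action on $T^\vee/T$; since $[\widehat{O}:\Gamma_T]$ grows at least polynomially in $|T^\vee/T|$, while $[\rho_X(G_\Q):\rho_X(H)]\leq N$, a suitable uniform open-image statement for $\rho_X$, in the style of Cadoret--Moonen, would yield $|T^\vee/T|\leq C(N)$. Second, among even lattices of the same signature as $\mathrm{T}(X)$ with discriminant at most $C(N)$, the classical Cassels--Minkowski finiteness result ensures only finitely many isometry classes; the Kneser neighboring description of \S\ref{more neighbors} then shows that any such $T$ is reached from $\mathrm{T}(X)$ by a bounded chain of $d$-neighbor moves with $d\leq C(N)$, delivering the desired bound depending only on $N$.

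The genuinely hard step is the first one: obtaining a uniform lower bound on the index of any given congruence subgroup inside $\rho_X(G_\Q)$ as $X$ varies. This is exactly the Mumford--Tate--style openness of the transcendental Galois representation, and it is closely parallel to, and in the spirit of the introduction I expect essentially equivalent to, the uniform boundedness of $|\Br(\overline{X})^{G_K}|$ for $[K:\Q]\leq N$. Accordingly, I view Conjecture \ref{conj finiteness isogenous} as most naturally approached conditionally on Shafarevich's Conjecture \ref{Sha conj} together with such a uniform Brauer-finiteness statement; the role of the present paper, namely Theorem \ref{main thm introduction} and the $Y_\ell$ construction, is then precisely to provide the bridge converting these inputs into finiteness of isogeny classes in bounded degree.
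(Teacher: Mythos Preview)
The statement you are addressing is a \emph{conjecture} in the paper, and the paper does not prove it; it is invoked only as a hypothesis in Theorem~\ref{Thm implications}(2). The closest thing the paper actually proves is Theorem~\ref{finiteness thm}, which is the \emph{non-uniform} version (the bound is allowed to depend on the fixed $X$) under the additional hypothesis $\End_{\Hdg}(\mathrm{T}(X_\C))=\Z$. That proof, carried out in Section~\ref{finiteness and uniformity}, does not go through a discriminant bound at all: it uses Cadoret--Moonen open image for the single $X$ to reduce first to maximal K3 surfaces, and then controls those via finiteness properties of maximal compact-open subgroups of $\mathrm{SO}_X(\A_f)$ containing a fixed open subgroup.

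Your outline also reverses the logical direction of Theorem~\ref{Thm implications}. There the paper deduces uniform bounds on Galois-invariant level structures in $\Br(\overline{X})$ \emph{from} Conjecture~\ref{conj finiteness isogenous}, by using the $X_\ell$ and $Y_\ell$ constructions to manufacture many isogenous K3 surfaces over $K$ out of many fixed Brauer classes. You are proposing to go the other way; the paper does not attempt this, and your last paragraph misreads the role of Theorem~\ref{main thm introduction} and the $Y_\ell$ construction.

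On the substance of your sketch, two steps are genuine obstacles rather than routine. Step~1 requires an open-image statement that is \emph{uniform in $X$}, and no such result is available; Theorem~\ref{integral mt conj} and Proposition~\ref{Cadoret-Moonen proposition} produce constants depending on $X$. Step~2 does not close even granting a uniform bound $|T^\vee/T|\leq C(N)$. Cassels--Minkowski bounds the number of abstract isometry classes of $T$, but isomorphism classes of isogenous K3 surfaces correspond (up to finite Fourier--Mukai ambiguity) to $U_X$-orbits of lattices inside $V=\mathrm{T}(X,\Q)$, and a single abstract isometry class can split into infinitely many such orbits when $U_X$ is finite. The neighbor-chain argument does not rescue this: $d$-neighbors stay within a fixed genus (Theorem~\ref{Kneser thm}), so lattices $T$ not in the genus of $\mathrm{T}(X)$ are never reached, and even within a genus there is no a priori bound on the length of a connecting chain.
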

In the following, $\ell(A_X)$ denotes the minimal number of generators of the finite abelian group $A_X$. We prove: 
\begin{thm} \label{Thm implications}
\begin{enumerate}
\item[] 
\item  Assume that Shafarevich Conjecture is true and let $N >0 $ be an integer. Then, there is a integer $M$ such that, for any K3 surface $X$ defined over a number field $K$ with $[K \colon \Q] \leq N$, which satisfies both $\rho_{\bar{X}} \geq 3 + \ell(A_X)$ and $\ell(A_X) \leq 19 - \rho(X)$, if $\Br(\overline{X})[d] \subset \Br(\overline{X})^{G_K}$ then $d \leq M$.
 \item Assume that Conjecture \ref{conj finiteness isogenous} is true and let $N >0 $ be an integer. Then, there is a integer $M$ such that, for any K3 surface $X$ defined over a number field $K$ with $[K \colon \Q] \leq N$, such that $\End(\mathrm{T}(X_\C))= \Q$ or a totally real field and $\ell(A_X) \leq 19 - \rho(X)$, if $\Br(\overline{X})[d] \subset \Br(\overline{X})^{G_K}$ then $d \leq M$.
\end{enumerate}
If one considers only level structures with $d$ and $\mathrm{discr}(\NS(X))$ coprime, one can remove the condition $\ell(A_X) > 19 - \rho(X)$ from both statements, and one can replace the condition $\rho_{\bar{X}} \geq 3 + \ell(A_X)$ with $\rho_{\bar{X}} \geq 3$ in the first statement. 
\end{thm}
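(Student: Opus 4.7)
The plan is to argue by contradiction, using the machinery of Kneser neighbors developed above to turn the hypothesis $\Br(\overline{X})[d] \subset \Br(\overline{X})^{G_K}$ for large $d$ into a production mechanism for K3 surfaces over number fields of bounded degree, which then contradicts the relevant finiteness conjecture. Suppose no such $M$ exists; then there is a sequence $(X_n/K_n, d_n)$ satisfying the hypotheses, with $[K_n:\Q]\le N$ and $d_n\to\infty$. After enlarging each $K_n$ to an extension $L_n/K_n$ of degree bounded by a universal constant, we may assume $\NS(\overline{X_n})^{G_{L_n}} = \NS(\overline{X_n})$: indeed the Galois action on $\NS(\overline{X_n})$ factors through a finite subgroup of $\GL$ of rank at most $20$, whose order is universally bounded. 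Since $G_{L_n}\subset G_{K_n}$, the inclusion $\Br(\overline{X_n})[d_n] \subset \Br(\overline{X_n})^{G_{L_n}}$ is preserved, and $[L_n:\Q]$ is still bounded in terms of $N$.

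For part (1), pick any isotropic line $\ell_n \subset \Br(\overline{X_n})[d_n]$. The hypotheses $\rho_{\overline{X_n}} \geq 3 + \ell(A_{X_n})$ and $\ell(A_{X_n}) \leq 19 - \rho(X_n)$ let us invoke the non-coprime generalisation of Kneser's correspondence from Section \ref{more neighbors} to produce the K3 surface $Y_{\ell_n}$, and item (2) of the corollary following Theorem \ref{main thm introduction} shows that $(Y_{\ell_n}, g_{\ell_n}, \NS(Y_{\ell_n}))$ descends to $L_n$. The construction yields $\disc(\NS(Y_{\ell_n})) = d_n^2 \cdot \disc(\NS(X_n))$. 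Applying Shafarevich's Conjecture \ref{Sha conj} to $\{Y_{\ell_n}\}$ bounds $\disc(\NS(Y_{\ell_n}))$ uniformly in $n$, so $d_n^2 \le d_n^2 \cdot \disc(\NS(X_n))$ is bounded, contradicting $d_n\to\infty$. When $d_n$ is coprime to $\disc(\NS(X_n))$ one only needs the original Kneser correspondence, which is why both hypotheses on $\rho(X_n)$ and $\ell(A_{X_n})$ can be relaxed.

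For part (2), the hypothesis $\ell(A_{X_n})\le 19-\rho(X_n)$ forces the rank of $\mathrm{T}(X_n)$ to be at least $3$, so the number of isotropic lines in the quadratic module $\Br(\overline{X_n})[d_n]\cong \mathrm{T}(X_n)\otimes\Z/d_n\Z$ grows without bound as $d_n\to\infty$. By item (1) of the corollary, each such line $\ell$ yields a K3 surface $X_\ell$ defined over $L_n$ and isogenous to $X_n$; and since $\End_{\Hdg}(\mathrm{T}(X_n)_\Q)$ is $\Q$ or a totally real field, the assignment $\ell\mapsto X_\ell$ is injective up to isomorphism, as recorded in the introduction. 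This produces arbitrarily many non-isomorphic K3 surfaces in the isogeny class of $X_n$ over extensions of degree bounded uniformly in $N$, contradicting the uniform statement of Conjecture \ref{conj finiteness isogenous}. The main obstacle in both parts is precisely the descent of the neighbors $X_\ell$ and $Y_\ell$ to $L_n$: this is the content of the corollary, whose hypothesis $\NS(\overline{X})^{G_K} = \NS(\overline{X})$ is what forced the initial passage from $K_n$ to $L_n$, and whose non-coprime case is what imposes the bounds on $\rho$ and $\ell(A_X)$.
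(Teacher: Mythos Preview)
Your argument is correct and follows essentially the same route as the paper: pass to an extension of bounded degree to make $\NS$ Galois-trivial, then in part (1) build the auxiliary surface $Y_{\ell}$ whose N\'eron--Severi discriminant is $d^2$ times that of $X$ and invoke Shafarevich, while in part (2) count the neighbors $X_\ell$ and use the injectivity of $\ell\mapsto X_\ell$ in the totally real case to contradict the uniform bound of Conjecture~\ref{conj finiteness isogenous}. Two small points worth tightening: in part (1) you should note explicitly that the hypothesis $\ell(A_{X_n})\le 19-\rho(X_n)$, i.e.\ $\ell(A_{X_n})\le \mathrm{rank}\,\mathrm{T}(X_n)-3$, is what guarantees the \emph{existence} of a generalized isotropic line (Remark~\ref{remark generalized isotropic lines}), not just the existence of $Y_\ell$; and in part (2) the same remark gives the quantitative lower bound $N_{d_n}(\mathrm{T}(X_n))\ge d_n$, which is sharper and more direct than appealing to asymptotic growth.
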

Note that the condition $\ell(A_X) \leq 19 - \rho(X)$ holds automatically when $\rho(X) \leq 9$. The result can be strengthened if instead of level structures one considers the finiteness of the isotropic Brauer lines or classes.

For sake of completeness, we include in the end of the paper an alternative proof of the following finiteness:

\begin{thm} \label{finiteness thm}
Let $X/ \overline{\Q}$ be a K3 surface and suppose that $\End_{\Hdg}(T(X_\C)) = \Z$. Then, for any integer $N>0$, only finitely many isomorphism classes of K3 surfaces $Y / \overline{\Q}$ that are isogenous to $X$ can be defined over field extensions $K/ \Q$ of degree $[K \colon \Q] \leq N$. 
\end{thm}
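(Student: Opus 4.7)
The plan is to combine Theorem~\ref{main thm introduction} with the big-monodromy theorem for K3 surfaces of generic Hodge type, reducing the claim to a uniform bound on the discriminant of a certain sublattice of $\mathrm{T}(X_\C,\Q)$.

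Suppose $Y/\overline{\Q}$ is isogenous to $X$ and admits a model over a number field $K$ with $[K:\Q]\le N$. After enlarging $K$ by an extension of degree bounded in terms of $X$ alone (so that $X$ is defined over $K$ and so that the universal bound of \ref{corollary minimal field} applies), we may assume that $X$, $Y$, $\NS(Y)$ and a Hodge isometry $f\colon \mathrm{T}(Y_\C,\Q)\to\mathrm{T}(X_\C,\Q)$ are all simultaneously defined over $K$, now with $[K:\Q]\le N'=cN$ for a constant $c=c(X)$. Set $T_Y:=f(\mathrm{T}(Y))\subset\mathrm{T}(X_\C,\Q)$. Because $\End_{\Hdg}(\mathrm{T}(X_\C,\Q))=\Q$, the only Hodge isometries of $\mathrm{T}(X_\C,\Q)$ are $\pm\id$; by the global Torelli theorem, $Y$ is then determined, up to a universally bounded ambiguity, by the lattice $T_Y$. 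Thus it suffices to bound the number of possible $T_Y$.

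By Theorem~\ref{main thm introduction}, every such $T_Y$ satisfies (a) $T_Y\otimes\widehat{\Z}$ is stable under $\rho_X(G_K)$, and (b) $\rho_X(G_K)$ acts trivially on $A_Y=T_Y^\vee/T_Y$. The hypothesis $\End_{\Hdg}(\mathrm{T}(X_\C))=\Z$ forces the Mumford--Tate group of $\mathrm{T}(X_\C,\Q)$ to be $\operatorname{SO}(\mathrm{T}(X_\C,\Q))$; combining the Mumford--Tate conjecture for K3 surfaces (Tankeev, André) prime by prime with a Serre-type adelic openness argument (using the surjectivity of the mod-$\ell$ representation for almost all $\ell$), the image $\rho_X(G_\Q)$ is open in $O(\mathrm{T}(X,\widehat{\Z}))$, of some index $c_0=c_0(X)$. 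Hence $[\,O(\mathrm{T}(X,\widehat{\Z})):\rho_X(G_K)\,]\le c_0 N'$. Let $H(T_Y)\subset O(\mathrm{T}(X,\widehat{\Z}))$ be the subgroup of isometries preserving $T_Y\otimes\widehat{\Z}$ and inducing the trivial action on $A_Y$; by (a)--(b) we have $\rho_X(G_K)\subset H(T_Y)$, and so $[O(\mathrm{T}(X,\widehat{\Z})):H(T_Y)]\le c_0 N'$.

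The technical heart of the argument is to convert this uniform index bound into a uniform bound on $|\mathrm{discr}\,T_Y|$. Working prime by prime in the decomposition $O(\mathrm{T}(X,\widehat{\Z}))=\prod_\ell O(\mathrm{T}(X,\Z_\ell))$, for each prime $\ell$ with $\ell^k\mid |A_Y|$ the $\ell$-component of $H(T_Y)$ is contained in a proper congruence subgroup of $O(\mathrm{T}(X,\Z_\ell))$ of index growing polynomially in $\ell^k$ (with degree roughly $\binom{r}{2}$, where $r=\mathrm{rk}\,\mathrm{T}(X)$). Multiplying the local indices, the global bound $c_0 N'$ forces a uniform estimate $|\mathrm{discr}\,T_Y|\le D=D(N,X)$. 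Once such a $D$ is obtained, a full-rank sublattice $T_Y\subset\mathrm{T}(X_\C,\Q)$ with $|\mathrm{discr}\,T_Y|\le D$ is necessarily commensurable with $\mathrm{T}(X)$, with the indices $[T_Y+\mathrm{T}(X):T_Y]$ and $[T_Y+\mathrm{T}(X):\mathrm{T}(X)]$ bounded in terms of $D$ and $|\mathrm{discr}\,\mathrm{T}(X)|$. Hence every admissible $T_Y$ lies inside a fixed overlattice $\tfrac{1}{M}\mathrm{T}(X)$ with index at most $M'$ for universal $M,M'=M(N,X)$, and there are only finitely many such sublattices, finishing the proof.

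The main obstacle is the quantitative step just sketched: one must bound the index of $H(T_Y)$ in $O(\mathrm{T}(X,\widehat{\Z}))$ below by a strictly increasing function of $|\mathrm{discr}\,T_Y|$, and this requires additional care at the (finitely many) primes dividing $|\mathrm{discr}\,\mathrm{T}(X)|$, where $\mathrm{T}(X)\otimes\Z_\ell$ fails to be self-dual and the naive principal-congruence-subgroup estimate on $O(\mathrm{T}(X,\Z_\ell))$ must be corrected.
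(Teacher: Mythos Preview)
Your overall strategy---use open image to bound $[O(\mathrm{T}(X,\widehat{\Z})):H(T_Y)]$ and then deduce finiteness of the admissible lattices $T_Y$---is reasonable, but the passage from ``bounded index'' to ``bounded discriminant'' to ``finitely many $T_Y$'' has a real gap. The assertion that a full-rank lattice $T_Y\subset\mathrm{T}(X,\Q)$ with $|\mathrm{discr}\,T_Y|\le D$ has $[T_Y+\mathrm{T}(X):T_Y]$ and $[T_Y+\mathrm{T}(X):\mathrm{T}(X)]$ bounded in terms of $D$ is simply false: the $d$-neighbors of $\mathrm{T}(X)$ (Section~\ref{subsection Kneser}) give infinitely many pairwise distinct lattices all with $|\mathrm{discr}\,T_Y|=|\mathrm{discr}\,\mathrm{T}(X)|$ by Theorem~\ref{Kneser thm}, yet $[T_Y+\mathrm{T}(X):\mathrm{T}(X)]=d$ is unbounded. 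So a discriminant bound alone cannot force $T_Y$ into a fixed overlattice $\tfrac{1}{M}\mathrm{T}(X)$.

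This is not just a failure of the last step; it reflects that your local index estimate looks only at primes $\ell\mid|A_Y|$. For a $d$-neighbor with $(d,\mathrm{discr}\,\mathrm{T}(X))=1$, those are exactly the primes where $T_Y\otimes\Z_\ell=\mathrm{T}(X)\otimes\Z_\ell$, so your bound says nothing about $d$. What actually controls $T_Y$ is the set of primes where $T_Y\otimes\Z_\ell\neq\mathrm{T}(X)\otimes\Z_\ell$: there the stabilizer of $T_Y\otimes\Z_\ell$ inside $O(\mathrm{T}(X,\Z_\ell))$ is an intersection of two distinct maximal compacts, and its index grows with the building distance. The paper organizes this differently: it first reduces to \emph{maximal} lattices, which form a single $\mathrm{SO}_X(\A_f)$-orbit (Remark~\ref{remark on class and genus}), and then invokes the Bruhat--Tits fact that a fixed compact-open subgroup lies in only finitely many maximal compacts, together with finiteness of $\mathcal{N}/\mathbf{SO}_X(\widehat{\Z})$ for the normalizer $\mathcal{N}$; the non-maximal case is handled separately via a Brauer-group bound (Proposition~\ref{Finiteness I}). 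Your approach could be salvaged by bounding $[\mathrm{T}(X):T_Y\cap\mathrm{T}(X)]$ rather than $|\mathrm{discr}\,T_Y|$, but as written it does not close.
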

This is well-known to the experts, and it can be proved, for instance, using the Masser-W\"{u}stholz's isogeny estimates for abelian varieties \cite{MR1217345} and the Zarhin trick for K3 surfaces invented by Charles (see \cite{MR3548531}) in complete analogy to the strategy employed in \cite{wrap131525} to prove the finiteness of CM K3 surfaces. We give an elementary proof more in line with the techniques expressed in the paper, and which only relies on the open-adelic image proved by Cadoret and Moonen in \cite{cadoret_moonen} and some classical finiteness properties of adelic algebraic groups. 
\subsection{Acknowledgment}
This work was funded by the Europen Research Council (ERC) under the European Union's Horizon 2020 research and innovation programme - grant agreement No 948066. 

I wish to thank Salvatore Floccari, Giacomo Mezzedimi, Nebojsa Pavic, and Stefan Schreieder for the many enjoyable discussions about mathematics and more.   

\subsection{Notation and convention}
All the fields in this paper are considered inside a fixed field $\C$ of the complex numbers. If $X/K$ is a K3 surface over a number field, we denote by $\overline{X}$ its base-change to $\overline{K}$, and by $X_\C$ its base-change to $\C$. By a lattice we shall always mean a free, finitely generated $\Z$-module $L$ endowed with a non-degenerate bilinear form $L \times L \rightarrow \Z$. We denote by $L^\vee$ its dual and we say that $L$ is even if $(x,x) \in 2\Z$ for every $x \in L$. For a finite abelian group $A$, we denote by $\ell(A)$ its length, that is, the minimal number of generators.
\section{Preliminaries on Hodge structures of K3 surfaces and Galois representations} \label{section 2}
In this section we set-up the notation and we recall some well-known theory of K3 surfaces. Our only contribution is Proposition \ref{prop def hodge isom}.
\subsection{Singular cohomology of K3 surfaces} 
 Let $X/ \C$ be an algebraic K3 surface and let $\mathrm{H}^2_{\text{sing}}(X, \Z)$ be its second singular cohomology group. This is a free $\Z$ module of rank $22$. The intersection pairing $$\mathrm{H}^2_{\text{sing}}(X, \Z)(1) \times \mathrm{H}^2_{\text{sing}}(X, \Z)(1) \rightarrow \mathrm{H}_{\text{sing}}^4(X, \Z)(2) \cong \Z$$ endows $\mathrm{H}^2_{\text{sing}}(X, \Z)(1)$ with a non-degenerate quadratic form, and from Poincare duality one deduces that $\mathrm{H}^2_{\text{sing}}(X, \Z)(1)$ is \textit{unimodular}, i.e., that $|\mathrm{discr}(\mathrm{H}^2_{\text{sing}}(X, \Z)(1))| =1$. The isomorphism class of this lattice does not depend on the chosen $X$, and one can also prove that $\mathrm{H}^2_{\text{sing}}(X, \Z)(1)) \cong U^{3} \oplus E_8(-1)^2,$ where $U$ denotes the hyperbolic lattice and $E_8(-1)$ the unique even, negative definite, unimodular lattice of rank $8$. There is a natural identification between $\Pic(X)$ and the N\'{e}ron-Severi lattice $\NS(X) \subset \mathrm{H}^2_{\text{sing}}(X, \Z)(1)$. The orthogonal complement of $\NS(X)$ is the transcendental lattice $\mathrm{T}(X) \subset \mathrm{H}^2_{\text{sing}}(X, \Z)(1).$ This is an integral Hodge structure of weight $0$ with weights $(1,-1), (0,0), (-1,1)$ and with $\dim_{\C}(T^{1,-1}_X) = 1$. The Hodge structure $\mathrm{T}(X,\Q) = \mathrm{T}(X) \otimes_\Z \Q$ is irreducible whenever $X$ is projective, and $\mathrm{T}(X)$ can also be characterized as the biggest, primitive sub-Hodge structure of $\mathrm{H}^2(X,\Z)(1)$ whose complexification contains the period of $X$. 
\begin{defi}
The finite group $A_X = \NS(X)^\vee / \NS(X)$ is called the \textit{discriminant form} of $X$.
\end{defi}
Here $L^\vee$ denotes the dual of a lattice $L$. The group $A_X$ carries a natural quadratic form $q \colon A_X \rightarrow \Q/ 2\Z$ given by $q(\ell + \NS(X) ) := (\ell, \ell) + 2\Z$ (this is well defined since $\NS(X)$ is even). Similarly, one has a quadratic form $q'$ on $\mathrm{T}(X)^\vee / \mathrm{T}(X)$, and a result of Nikulin \cite[Proposition 1.6.1]{MR525944} shows that there is a natural isomorphism
 \begin{equation} \label{Nikulin's isomorphism}
     (A_X, q) \cong (\mathrm{T}(X)^\vee / \mathrm{T}(X), -q'),
 \end{equation}
 due to the fact that $\NS(X)$ and $\mathrm{T}(X)$ are orthogonal complements in the unimodular lattice $\mathrm{H}^2_{\text{sing}}(X, \Z)$. Note that $|A_X| = |\mathrm{disc}(\mathrm{T}(X))| = |\mathrm{disc}(\NS(X))|.$ For the next proposition, see \cite[Corollary 1.5.2]{MR525944}.
\begin{prop} \label{Nikulin extension}
Let $X,Y$ be complex K3 surfaces, and let $f_t \colon \mathrm{T}(X) \rightarrow \mathrm{T}(Y)$ and $f_n \colon \NS(X) \rightarrow \NS(Y)$ be isometries such that the two induced maps $A_X \rightarrow A_Y$ coincide. Then there exists a unique isometry $\mathrm{H}^2_{\text{sing}}(X, \Z)(1) \rightarrow \mathrm{H}^2_{\text{sing}}(Y, \Z)(1)$ that restrict to $f_t$ and $f_n$ on $\mathrm{T}(X)$ and $\NS(X)$ respectively. 
\end{prop}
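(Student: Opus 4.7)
The plan is to reduce the statement to the standard discriminant-gluing description of an even unimodular overlattice. Set $H_X := \mathrm{H}^2_{\text{sing}}(X,\Z)(1)$ and $L_X := \mathrm{T}(X) \oplus \NS(X)$, a finite-index sublattice of $H_X$; define $H_Y$, $L_Y$ analogously. Because $H_X$ is unimodular and contains $L_X$ with finite index, the quotient $\Gamma_X := H_X/L_X$ embeds canonically into the discriminant group $L_X^{\vee}/L_X = (\mathrm{T}(X)^{\vee}/\mathrm{T}(X)) \oplus A_X$ as an isotropic subgroup. Combining the index identity $|\Gamma_X|^2 = |\mathrm{T}(X)^{\vee}/\mathrm{T}(X)| \cdot |A_X|$ with the fact that the projections $\Gamma_X \to \mathrm{T}(X)^{\vee}/\mathrm{T}(X)$ and $\Gamma_X \to A_X$ are both injective (since $\mathrm{T}(X)$ and $\NS(X)$ are primitively embedded as mutual orthogonal complements in the unimodular $H_X$), one concludes that $\Gamma_X$ is the graph of an anti-isometry $\phi_X \colon \mathrm{T}(X)^{\vee}/\mathrm{T}(X) \to A_X$. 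Unwinding the construction, $\phi_X$ is precisely the isomorphism~(\ref{Nikulin's isomorphism}), and analogously for $\phi_Y$.

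Next I would form $f := f_t \oplus f_n \colon L_X \to L_Y$, which is tautologically a lattice isometry, and extend it uniquely to a $\Q$-linear isometry $f_{\Q}$ of the ambient rational orthogonal direct sums. Since $H_X$ is recovered as the preimage of $\Gamma_X$ in $L_X^{\vee}$ (and likewise for $Y$), the map $f_{\Q}$ restricts to an isometry $H_X \to H_Y$ if and only if $(\overline{f}_t \oplus \overline{f}_n)(\Gamma_X) = \Gamma_Y$. Under the identifications $\Gamma_\bullet = \mathrm{graph}(\phi_\bullet)$, this condition rewrites as the commutativity $\phi_Y \circ \overline{f}_t = \overline{f}_n \circ \phi_X$, which is exactly the assumption that $f_t$ and $f_n$ induce the same map $A_X \to A_Y$ after transport via Nikulin's isomorphisms.

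Uniqueness is immediate: any extension is determined by its restriction to the finite-index sublattice $L_X$, and two $\Z$-linear maps between torsion-free abelian groups agreeing on a finite-index subgroup must agree everywhere. I do not anticipate a genuine obstacle here; the only delicate point is to carry the sign convention of~(\ref{Nikulin's isomorphism}) consistently—the quadratic form is negated on one side so that $\phi_X$ and $\phi_Y$ are truly anti-isometries—but this sign enters symmetrically on both $X$ and $Y$ and cancels in the graph-compatibility identity above.
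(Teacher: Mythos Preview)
Your argument is correct and is precisely the standard discriminant-gluing proof of this fact. The paper itself does not supply a proof: it simply refers the reader to \cite[Corollary 1.5.2]{MR525944}, Nikulin's original result, of which your write-up is a clean reproduction. So there is nothing to compare beyond noting that you have unpacked what the paper leaves as a citation.
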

We recall that the Grothendieck-Brauer group $\Br(X) = \mathrm{H}^2_{\et}(X, \Oo_X^\times)$ of $X$ can be written via the exponential sequence as $\Br(X) \cong \mathrm{T}^\vee (X)_X \otimes_\Z \Q/ \Z \cong \Hom(\mathrm{T}(X), \Q / \Z).$ In particular, $\Br(X)$ is a torsion abelian group of the form $(\Q/ \Z)^{22 - \rho_X}.$
\subsection{\`{E}tale cohomology and Galois action}
Suppose now that $X$ is defined over a number field $K \subset \C$ and let $\bar{K}$ be the algebraic closure of $K$ in $\C.$ Let $\mathrm{H}^2_{\et}(\bar{X}, \widehat{\Z})(1)$ be the Tate twist of the second \et ale cohomology group of $\bar{X} : = X \times_K \Spec(\bar{K})$. The inclusion $\bar{K} \subset \C$ induces a comparison isomorphism
\begin{equation} \label{Comparison isomorphism}
\mathrm{H}^2_{\et}(\bar{X}, \widehat{\Z})(1) \cong \mathrm{H}^2_{\text{sing}}(X_\C, \Z)(1) \otimes \widehat{\Z},
\end{equation}
so that we can view $\mathrm{H}^2_{\text{sing}}(X, \Z)(1) \subset \mathrm{H}^2_{\et}(\bar{X}, \widehat{\Z})(1)$ via the map $v \mapsto v \otimes 1.$ We denote the orthogonal complement of $\NS(\bar{X})$ in $\mathrm{H}^2_{\et}(\bar{X}, \widehat{\Z})(1)$ by $T(\overline{X}, \widehat{\Z})$. As the notation suggests, via the isomorphism \ref{Comparison isomorphism}, one has $$T(\overline{X}, \widehat{\Z}) \cong \mathrm{T}(X_\C) \otimes \widehat{\Z}$$ and a natural isomorphism $$\Br(\bar{X}) \cong \mathrm{T}(\overline{X}, \widehat{\Z})^\vee \otimes \Q / \Z.$$ 
To $X / K$ there is associated a (continuous) Galois representation $$\rho_X \colon G_K \rightarrow \Gl(\mathrm{H}_{\text{sing}}^2(X, \Z))(\widehat{\Z}).$$ 
Since the Galois action respects Poincar\'{e} duality, one sees that $\rho_X$ acts by $\widehat{\Z}$-isometries. The action on the N\'{e}ron-Severi lattice is not very interesting due to the following well-known phenomenon: 
\begin{lemma} \label{lemma Neron-Severi}
There exists a constant $C_n$ with the following property: for any K3 surface $X/K$ as above, there is a field extension $K'/K$ of degree $[K \colon K'] \leq C_n$ such that $G_{K'}$ acts trivially on $\NS(\overline{X})$.
\end{lemma}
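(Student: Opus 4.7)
The plan is to establish this via the combination of two classical facts: the Galois image on $\NS(\bar X)$ is always finite, and finite subgroups of $\GL_n(\Z)$ have uniformly bounded order.

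First, I would verify that the representation $G_K \to \Aut_{\Z}(\NS(\bar X))$ induced by $\rho_X$ has finite image, uniformly by a continuity argument. Choose $\Z$-module generators $D_1,\dots,D_\rho$ of $\NS(\bar X)$, where $\rho = \rho(\bar X) \leq 22$. Each $D_i$ is represented by an algebraic cycle defined over some finite extension $K_i/K$, hence fixed by $G_{K_i}$. The compositum $L := K_1\cdots K_\rho$ is a finite extension of $K$ over which every generator, and therefore every class, of $\NS(\bar X)$ is Galois-invariant. So the action of $G_K$ on $\NS(\bar X)$ factors through $\Gal(L/K)$, and in particular the image of $G_K$ in $\Aut_{\Z}(\NS(\bar X))$ is a finite subgroup.

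Second, I would invoke Minkowski's classical bound: there exists a constant $M(n)$, depending only on $n$, such that every finite subgroup of $\GL_n(\Z)$ has order at most $M(n)$. Since $\NS(\bar X)$ is free of rank $\rho \leq 22$, the image $H$ of $G_K$ in $\Aut_{\Z}(\NS(\bar X)) \hookrightarrow \GL_{22}(\Z)$ satisfies $|H| \leq M(22)$. Taking $C_n := M(22)$ and letting $K' \subset \bar K$ be the fixed field of the kernel of the action on $\NS(\bar X)$, we obtain $[K':K] = |H| \leq C_n$, and by construction $G_{K'}$ acts trivially on $\NS(\bar X)$.

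There is no serious obstacle; the lemma is a general statement about N\'eron-Severi groups of smooth projective varieties over number fields (it does not actually use the K3 hypothesis beyond the bound $\rho \leq 22$). The only slightly delicate point is to make the constant truly independent of $X$, which is handled by Minkowski's theorem rather than by any geometric input specific to K3 surfaces.
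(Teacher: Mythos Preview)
Your proof is correct and follows essentially the same approach as the paper: both arguments show that the Galois image in $\Aut_{\Z}(\NS(\overline{X}))$ is finite and then invoke Minkowski's uniform bound on finite subgroups of $\GL_n(\Z)$. The only cosmetic differences are that the paper deduces finiteness from continuity of $\rho_X$ rather than from your explicit field-of-definition argument, cites the specific form of Minkowski via the embedding into $\GL_n(\Z/3\Z)$, and uses the sharper bound $\rho \leq 20$ for K3 surfaces.
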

\begin{proof}
Since $\NS(\overline{X})$ is a free $\Z$-module and $\rho_X$ is continuous, the image of $\rho_X$ must be a finite subgroup of $\Gl(\NS(\overline{X}))(\Z)$. The statement follows from the classical fact that any such subgroup embedds into $\Gl(\NS(\overline{X}))(\Z / 3)$, and that for K3 surfaces the rank of the N\'{e}ron-Severi group is always smaller than $20$.
\end{proof}
Let $\MT_X \subset \mathrm{GL}(\mathrm{T}(X, \Q))$ be the Mumford-Tate group of $X$; Mumford-Tate groups of K3 surfaces are completely classified by Zarhin, and they only depends on the field $E_X \coloneqq \End_{\Hdg}(\mathrm{T}(X,\Q))$ and on its action on the quadratic space $\mathrm{T}(X, \Q)$. When $E_X = \Q$, one has $\MT_X = \mathrm{SO}(\mathrm{T}(X, \Q))$. Since the Mumford conjecture holds for K3 surfaces, see \cite{zbMATH01181394} and \cite{zbMATH00931955}, one knows due to a result of Serre (see \cite[Proposition 6.14]{zbMATH00039403}) that that there is a finite extension $K'/K$ such that $\rho_X(G_K) \subset \MT_X(\A_f)$. The following important theorem is due to Cadoret and Moonen: 
\begin{thm}[\cite{cadoret_moonen}] \label{integral mt conj}
Let $X$ be a K3 surface over a number field $K$ and assume that $\rho_X(G_K) \subset \MT_X(\A_f)$. Then, $\rho_X(G_K) \subset \MT_X(\A_f)$ is a compact open subgroup for the adelic topology. 
\end{thm}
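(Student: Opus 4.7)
The plan is to build on two inputs: first, openness in each $\ell$-adic factor, which is the $\ell$-adic Mumford--Tate conjecture for K3 surfaces already alluded to in the excerpt via Tankeev--Andr\'e; and second, a group-theoretic Goursat argument to consolidate factor-wise openness into openness in the restricted product $\MT_X(\A_f)$. Compactness of $\rho_X(G_K)$ is automatic from the continuity of $\rho_X$ and the compactness of $G_K$, so the content is entirely in the openness statement.

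First, via the Kuga--Satake construction and Faltings' theorem on Tate classes of abelian varieties, for each prime $\ell$ the image $\rho_{X,\ell}(G_K)$ is an open subgroup of $\MT_X(\Q_\ell)$. Intersecting with the hyperspecial $\MT_X(\Z_\ell)$ (after possibly enlarging $K$ by a finite extension absorbed into the hypothesis $\rho_X(G_K) \subset \MT_X(\A_f)$), the projection to each $\ell$-adic factor is an open subgroup of $\MT_X(\Z_\ell)$. The Kuga--Satake step also records that $\MT_X^{\mathrm{der}}$ is, by Zarhin's classification, absolutely almost simple and simply connected (a spin/orthogonal form over the totally real or CM field $E_X$).

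The heart of the proof is the adelic upgrade. For $\ell$ outside a controlled finite set $S$, the reduction $\MT_X^{\mathrm{der}}(\Z/\ell)$ is a finite almost-simple group of Lie type whose isomorphism class, by the classification of finite simple groups of Lie type, determines the residue characteristic $\ell$ uniquely. A Goursat-type argument then shows that any closed subgroup $H \subset \prod_{\ell \notin S} \MT_X^{\mathrm{der}}(\Z_\ell)$ projecting onto an open subgroup in each factor is open in the product: any nontrivial subdirect correlation between the $\ell$-adic factor and the $\ell'$-adic factor would force an isomorphism of the simple quotients, which is excluded for $\ell \neq \ell'$ outside $S$. The finitely many bad primes in $S$ are handled by passing to a deeper congruence subgroup, at the cost of a further finite extension of $K$ whose existence is absorbed into the openness conclusion. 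Putting the pieces together, $\rho_X(G_K) \cap \MT_X^{\mathrm{der}}(\widehat{\Z})$ is open in $\MT_X^{\mathrm{der}}(\widehat{\Z})$, and since the abelianization $\MT_X / \MT_X^{\mathrm{der}}$ is a torus whose $\widehat{\Z}$-points have tractable Galois action (e.g.\ via class field theory in the CM case), one deduces openness of $\rho_X(G_K)$ in all of $\MT_X(\A_f)$.

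The main obstacle is the Goursat step: ruling out, for all but finitely many primes, graphs of exceptional isomorphisms between mod-$\ell$ simple quotients at distinct primes, and controlling the bad set $S$ uniformly. This requires both the classification of finite simple groups and a careful analysis of the integral models of $\MT_X$ at the primes dividing the discriminant of the quadratic form on $\mathrm{T}(X,\Q)$, which is where the detailed work of Cadoret--Moonen is concentrated.
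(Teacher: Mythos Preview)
The paper does not supply its own proof of this theorem; it is quoted as a result of Cadoret--Moonen and used as a black box. So there is nothing in the paper to compare your argument to directly.

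That said, your sketch contains a factual error that undermines the strategy as written. You assert that $\MT_X^{\mathrm{der}}$ is simply connected. This is false in the most common case: when $E_X = \Q$ (or more generally $E_X$ totally real), Zarhin's classification gives $\MT_X = \mathrm{SO}(\mathrm{T}(X,\Q))$ (respectively a Weil restriction of an $\mathrm{SO}$), which is semisimple and hence equal to its own derived group, but is \emph{not} simply connected---its simply connected cover is $\mathrm{Spin}$. Only in the CM case is $\MT_X^{\mathrm{der}} = \Res_{E_0/\Q}\mathrm{SU}$ simply connected. This matters because the Goursat/Nori--Serre style independence argument you outline is clean precisely when the derived group is simply connected; for $\mathrm{SO}$ one cannot conclude adelic openness from $\ell$-by-$\ell$ openness alone, and indeed the paper itself remarks immediately after the theorem that the analogous statement fails for some abelian varieties satisfying Mumford--Tate, because their groups are not \emph{Hodge-maximal}. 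Hodge-maximality (the Hodge cocharacter for a K3 does not lift to $\mathrm{Spin}$) is the condition Cadoret--Moonen actually exploit, and it is what replaces your simply-connectedness hypothesis. Alternatively, one can pass via Kuga--Satake to the abelian-variety side, where the relevant group genuinely is $\mathrm{GSpin}$ with simply connected derived group, run the argument there, and then descend; but this descent step is exactly the content you are skipping. Your outline of the Goursat step itself (distinct residue characteristics forbid isomorphisms of the finite simple quotients) is correct in spirit, but it lives upstairs in $\mathrm{Spin}$, not in $\MT_X$.
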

Note that the same result does not hold in general for abelian varieties for which the Mumford-Tate conjecture is known, because their Mumford-Tate groups are not always \textit{Hodge-maximal}. 
\subsection{Galois action on Hodge classes} Let $E_X \coloneqq \End_{\Hdg}(\mathrm{T}(X,\Q))$ be the algebra of Hodge endomorphisms of $\mathrm{T}(X,\Q)$ and let $U_X \subset E_X$ be the subgroup of Hodge isometries. One knows \cite{MR697317} that $E_X$ is always a field, either totally real, or CM. One also knows that $E_X \subset \mathrm{H}^4(X \times X, \Q(2))$ consists of motivated classes in the sense of Andr\'{e} or absolute Hodge classes in the sense of Deligne, and this is also true for $\mathrm{Hom}_{\text{Hdg}}(\mathrm{T}(X, \Q), \mathrm{T}(Y,\Q)) \subset \mathrm{H}_{\mathrm{sing}}^4(X \times Y, \Q(2))$ for another K3 surface $Y$. In both situations, for $\sigma \in \Aut(\C)$ and any $f \in \mathrm{Hom}_{\text{Hdg}}(\mathrm{T}(X, \Q), \mathrm{T}(Y,\Q))$ there corresponds $f^\sigma \in \mathrm{Hom}_{\text{Hdg}}(\mathrm{T}(X^\sigma,\Q), \mathrm{T}(Y^\sigma,\Q))$, in such a way that the following diagram commutes 
\begin{equation} \label{square galois}
\begin{tikzcd} 
\mathrm{T}(X^\sigma,\A_f) \arrow{r}{f^\sigma} \arrow{d}{\sigma^*_X}
& \mathrm{T}(Y^\sigma,\A_f) \arrow{d}{\sigma_{Y}^*} \\
\mathrm{T}(X,\A_f) \arrow{r}{f}
& \mathrm{T}(Y,\A_f).
\end{tikzcd}
\end{equation}
Here, $\sigma^*_X$ denotes the natural pullback map in \et ale cohomology induced by the morphism of schemes $\sigma_X \colon X \rightarrow  X^\sigma$, and similarly for $Y$. 
\begin{defi} \label{definition f}
Assume that $X$ and $Y$ can be defined over a number field $K$ and let $f \in \mathrm{Hom}_{\text{Hdg}}(\mathrm{T}(X,\Q), \mathrm{T}(Y,\Q)).$ We say that $f$ is defined over $K$ if $f^{\sigma} = f$ for every $\sigma \in G_K$.
\end{defi}
Note that by \eqref{square galois}, if $X=Y$, this happens if and only if $f \in E_X$ commutes with the action of $G_K$. There is a straightforward way to understand whether an element of $E_X$ is defined over $K.$ One first notes that if $X/ \C$ is a complex K3 surface, there is a natural map 
\begin{equation} \label{Inclusion fields}
E_X \rightarrow \End(T^{1,-1}(X_\C)) = \C
\end{equation}
which can be easily proved to be an embedding of fields, due to the fact that $\mathrm{T}(X, \Q)$ is irreducible whenever $X$ is projective. It follows from \cite{VALLONI2021107772} that $E_X$ is defined over $K$ if and only if, for one embedding $\sigma \colon K \rightarrow \C$, and hence any, the image of \eqref{Inclusion fields} is contained in $\sigma(K)$. Since $[E_X \colon \Q] \leq 20$ always, we obtain the following corollary:
\begin{cor} \label{corollary field defi hodge end}
For any K3 surface $X/K$ there exists a finite extension $K'/K$ of degree $[K' \colon K] \leq 20$ over which all the elements of $E_X$ are defined. 
\end{cor}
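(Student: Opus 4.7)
The plan is to apply the criterion recalled immediately before the statement from \cite{VALLONI2021107772}, namely that $E_X$ is defined over a number field $L \subset \C$ containing $K$ if and only if the image $F$ of the embedding \eqref{Inclusion fields} sits inside $L$ (for the fixed inclusion $L \subset \C$). The task therefore reduces to producing such an $L$ of degree at most $20$ over $K$, where ``defined over $L$'' is understood as in Definition \ref{definition f}, i.e.\ each element $f \in E_X$ satisfies $f^\sigma = f$ for every $\sigma \in G_L$.

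First I would identify $F$ as a subfield of $\C$ whose degree over $\Q$ equals $[E_X \colon \Q]$, and invoke the bound $[E_X \colon \Q] \leq 20$ stated in the paragraph above. Then, using the paper's convention that places all number fields inside the ambient $\C$, I would simply take $K' := K \cdot F$, the compositum formed inside $\C$. By construction $F \subseteq K'$, so the criterion immediately yields that every element of $E_X$ is defined over $K'$. The degree bound follows from the standard multiplicativity estimate $[K' \colon K] = [K \cdot F \colon K] \leq [F \colon \Q] = [E_X \colon \Q] \leq 20$, which is exactly the bound required.

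There is no substantial obstacle: the corollary is a direct consequence of the criterion recalled from \cite{VALLONI2021107772} together with the universal bound on $[E_X \colon \Q]$. The only mild point one should check is that the equivalent characterization really does transport Galois-invariance of each element of $E_X$ into the containment $F \subset K'$, which is precisely what the cited criterion encodes — since $E_X$ embeds as a field into $\C$, requiring the subfield to be contained in $K'$ is the same as requiring pointwise Galois fixity over $K'$. With this in hand the argument is complete.
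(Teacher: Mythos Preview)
Your proposal is correct and follows exactly the approach implicit in the paper: the corollary is stated immediately after recalling the criterion from \cite{VALLONI2021107772} and the bound $[E_X \colon \Q] \leq 20$, with no separate proof given. Your construction $K' = K \cdot F$ and the estimate $[K' \colon K] \leq [F \colon \Q] = [E_X \colon \Q] \leq 20$ simply make this explicit.
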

We conclude the section with a uniform bound similar to Lemma \ref{lemma Neron-Severi} and to the Corollary above.  

\begin{prop} \label{prop def hodge isom}
There exists a universal constant $C_h > 0 $ with the following property. Suppose that $X$ and $Y$ are K3 surfaces, and assume that they are defined over a number field $K$. Let $f \colon \mathrm{T}(Y_\C, \Q) \rightarrow \mathrm{T}(X_\C, \Q)$ be a Hogde isometry. Then, there exists a field extension $K'/K$ of degree $[K' \colon K] \leq C_h$ such that $f$ is defined over $K'$. 
\end{prop}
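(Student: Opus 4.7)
The plan is to show that $f$ generates a bounded Galois orbit inside the finite-dimensional $\Q$-vector space
\begin{equation*}
V \coloneqq \mathrm{Hom}_{\mathrm{Hdg}}(\mathrm{T}(Y_\C, \Q), \mathrm{T}(X_\C, \Q)),
\end{equation*}
and to conclude via Minkowski's classical bound on the order of finite subgroups of $\mathrm{GL}_n(\Z)$. Since $f \in V$ is an isomorphism of Hodge structures, post-composition with $f^{-1}$ identifies $V$ with $E_Y$, so $\dim_\Q V = [E_Y : \Q] \leq \mathrm{rk}(\mathrm{T}(Y)) \leq 21$ is bounded by a universal constant.

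The Galois group $G_K$ acts on $V$ by $h \mapsto h^\sigma = (\sigma_X^*)^{-1} \circ h \circ \sigma_Y^*$ as in \eqref{square galois} (using that $X^\sigma = X$, $Y^\sigma = Y$), and this action preserves $V$ because Hodge homomorphisms between transcendental lattices of K3 surfaces are motivated classes. To extract a $\Z$-structure, let
\begin{equation*}
V_\Z \coloneqq V \cap \mathrm{Hom}(\mathrm{T}(Y_\C, \Z), \mathrm{T}(X_\C, \Z)),
\end{equation*}
a full-rank $\Z$-lattice in $V$ (clear denominators in an integral basis). For $h \in V_\Z$, the extension $h \otimes 1 \colon \mathrm{T}(\overline Y, \widehat \Z) \to \mathrm{T}(\overline X, \widehat \Z)$ and the fact that each $\sigma_X^*, \sigma_Y^*$ is a $\widehat \Z$-isometry together imply that $h^\sigma$ also maps $\mathrm{T}(\overline Y, \widehat \Z)$ into $\mathrm{T}(\overline X, \widehat \Z)$; combined with $h^\sigma \in V$ and the identity $\mathrm{T}(X_\C, \Z) = \mathrm{T}(X_\C, \Q) \cap (\mathrm{T}(X_\C, \Z) \otimes \widehat \Z)$ inside $\mathrm{T}(X_\C, \A_f)$, this places $h^\sigma$ in $V_\Z$. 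Hence $V_\Z$ is $G_K$-stable.

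The induced continuous homomorphism $G_K \to \mathrm{GL}(V_\Z)(\Z) \subset \mathrm{GL}_n(\Z)$, with $n \leq 21$, has open kernel by discreteness of $\mathrm{GL}(V_\Z)(\Z)$, so its image is a finite subgroup of $\mathrm{GL}_n(\Z)$; Minkowski's theorem bounds its order by a constant $C_h$ depending only on $n$, hence by a universal constant. Letting $K'/K$ be the fixed field of this kernel, $[K' : K] \leq C_h$, and $G_{K'}$ acts trivially on $V_\Z$ and therefore on $V = V_\Z \otimes \Q$. In particular $f^\sigma = f$ for all $\sigma \in G_{K'}$, so $f$ is defined over $K'$ by Definition \ref{definition f}.

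The single non-formal input is the motivated nature of Hodge homomorphisms between transcendental lattices of K3 surfaces, which is what guarantees the $G_K$-stability of $V$ and makes \eqref{square galois} applicable; everything else reduces to the comparison isomorphism, basic manipulations with $\widehat \Z$- and $\Q$-structures, and Minkowski's classical finiteness for $\mathrm{GL}_n(\Z)$.
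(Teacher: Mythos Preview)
Your argument is correct. It differs from the paper's proof in a pleasant way: the paper first passes to an extension of degree at most $20$ over which $E_X$ is defined (Corollary~\ref{corollary field defi hodge end}), then observes that $\sigma \mapsto f^\sigma \circ f^{-1}$ is a $1$-cocycle with values in $U_X$, which---once the Galois action on $U_X$ is trivial---becomes a homomorphism with image in the torsion subgroup $\mu(E_X) \subset U_X$, and finally bounds $|\mu(E_X)|$ via $\phi(|\mu(E_X)|) \leq \dim_\Q \mathrm{T}(X_\C,\Q)$. You instead act linearly on the whole $\Q$-vector space $V$ of Hodge homomorphisms, stabilize an integral lattice, and invoke Minkowski's bound for finite subgroups of $\mathrm{GL}_n(\Z)$; this is essentially the same mechanism the paper uses for $\NS$ in Lemma~\ref{lemma Neron-Severi}, transported to $V$. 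Your route is more uniform (no preliminary reduction), while the paper's route yields finer information---for instance, it shows directly that when $\mu(E_X)=\{\pm 1\}$ (the generic case) a quadratic extension suffices, a sharpness that is not visible from the Minkowski bound.
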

\begin{proof}
Up to an extension of degree bounded by $20$, we can assume that $E_X$ is defined over $K$. Since both $Y$ and $X$ are defined over $K$, there is a natural $G_K$ action on the set $\text{Isom}_{\Hdg}(\mathrm{T}(Y_\C, \Q), \mathrm{T}(X_\C, \Q)).$ The map $G_K \ni \sigma \mapsto f^{\sigma} \circ f^{-1} \in U_X$ is a continuous $1-$cocycle of $G_K$ with values in $U_X$ (where $G_K$ has its natural profinite topology and $U_X$ the discrete one). Since the Galois action on $U_X$ is trivial, it follows that the map $\sigma \mapsto f^{\sigma} \circ f^{-1}$ is a continuous morphism of groups, and hence it must factorize as $G_{K} \rightarrow (U_{X})_\text{tors} = \mu(E_X) \subset U_X,$ where $\mu(E_X) $ is the group of the roots of unity in $E_X$. The order $ | \mu(E_X) | $ is universally bounded for K3 surfaces, since $\mathrm{T}(X_\C,\Q)$ is always a vector space over $\Q(\mu(E_X))$, and $20 \geq \dim_{\Q} \mathrm{T}(X_\C,\Q) \geq \phi ( | \mu(E_X) |),$ where $\phi$ denotes the Euler totient function.
\end{proof}
As the proof shows, in the general case where the only roots of unity in $E_X$ are $\{ \pm 1 \}$, the extension can be chosen to be quadratic. 

\section{Isogeny class and lattices} \label{Section 3}
In this section we study the isogeny class of a complex projective K3 surface $X$ in terms of lattices inside $T(X)_\Q$. 
\subsection{Basics} 
Let $X$ be a complex projective K3 surface. The marked isogeny class of $X$, denoted by $\mathcal{H}^{m}_X$, consists of all the pairs $(Y,f)$ where $Y \in \mathcal{H}_X$ and $f \colon \mathrm{T}(Y,\Q) \rightarrow \mathrm{T}(X,\Q)$ is a Hodge isometry, up to the natural notion of isomorphism.  

Consider the set $\mathcal{L}$ of all the free $\Z$-modules $L \subset \mathrm{T}(X,\Q)$ of maximal rank such that the restriction of the quadratic form to $L$ is even, and hence also integral. In this way, we obtain a map $ \mathcal{H}^{m}_X \rightarrow \mathcal{L}$ given by $(Y,f) \mapsto f(\mathrm{T}(Y))$ and we denote by $\mathcal{L}_{K3} \subset \mathcal{L}$ its image. The map $ \mathcal{H}^{m}_X \rightarrow  \mathcal{L}_{K3}$ descends to a map $\mathcal{T} \colon  \mathcal{H}_X \rightarrow U_X \backslash \mathcal{L}_{K3}$ where $U_X \backslash \mathcal{L}_{K3}$ denotes the set of orbits in $\mathcal{L}_{K3}$ under the natural left action of $U_X$, that is, the set of $U_X$-classes of lattices. Elements in the same fibre of $\mathcal{T}$ have isometric integral transcendental lattices and it follows that $\mathcal{T} $ has finite fibres. Note that if $E_X$ is totally real one has $U_X = \{ \pm 1 \}$, so that up to Fourier-Mukai partners, the correspondence between $\mathcal{H}_X$ and $\mathcal{L}_{K3}$ is one-to-one. 
\begin{prop} \label{proposition L_K3}
$\mathcal{L}_{K3}$ consists of all the lattices in $\mathrm{T}(X,\Q)$ that can be embedded primitively into the K3 lattice. Moreover, if $L \in \mathcal{L}$, and if there exists $L' \in \mathcal{L}_{K3}$ such that $L' \subset L$, then $L \in \mathcal{L}_{K3}$.
\end{prop}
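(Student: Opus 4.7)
The plan is to prove the two assertions separately. For the first, I would identify $\mathcal{L}_{K3}$ with the set of even, full-rank sublattices $L \subset \mathrm{T}(X,\Q)$ admitting a primitive embedding into the K3 lattice $\Lambda := U^3 \oplus E_8(-1)^2$. One direction is immediate: given $(Y,f) \in \mathcal{H}^m_X$, the lattice $f(\mathrm{T}(Y))$ is isomorphic to $\mathrm{T}(Y)$, which embeds primitively in $\mathrm{H}^2_{\text{sing}}(Y,\Z)(1) \cong \Lambda$ as the orthogonal complement of $\NS(Y)$.

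For the converse direction, fix a primitive embedding $L \hookrightarrow \Lambda$ and let $M = L^\perp$; since $L \otimes \Q = \mathrm{T}(X,\Q)$ has signature $(2, 20 - \rho_X)$, $M$ has signature $(1, \rho_X - 1)$ and contains a class of positive square. I would transport the Hodge decomposition of $L \otimes \C \subset \mathrm{T}(X,\C)$ to $L$ and declare $M \otimes \C$ to sit entirely in type $(0,0)$, thereby defining a Hodge structure of K3 type on $\Lambda$. By the surjectivity of the period map together with the Torelli theorem there exists a K3 surface $Y$ and a Hodge isometry $\phi \colon \mathrm{H}^2_{\text{sing}}(Y,\Z)(1) \xrightarrow{\sim} \Lambda$, and $Y$ is projective because $M$, and thus $\NS(Y)$, contains a positive class. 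The crucial step is to verify $\phi^{-1}(L) = \mathrm{T}(Y)$: for any $v \in \Lambda$ of Hodge type $(0,0)$, decompose $v = v_L + v_M$ along the rational splitting $\Lambda \otimes \Q = (L \otimes \Q) \oplus (M \otimes \Q)$; the condition $v \in \Lambda^{0,0}$ forces $v_L \in (L \otimes \Q) \cap L^{0,0}$, and this intersection vanishes because $\mathrm{T}(X,\Q)$ is irreducible as a rational Hodge structure (projectivity of $X$) and hence admits no non-zero trivial sub-Hodge structure. Therefore $v = v_M \in (M \otimes \Q) \cap \Lambda = M$ by primitivity of $M$, giving $\NS(Y) = \phi^{-1}(M)$ and $\mathrm{T}(Y) = \phi^{-1}(L)$, so that $\phi$ restricts to the desired Hodge isometry sending $\mathrm{T}(Y)$ onto $L$.

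For the second assertion, I would argue lattice-theoretically using the characterization just established. The finite-index extension $L' \subset L$ corresponds to an isotropic subgroup $H \subset A_{L'}$ with $L = L'_H$. Fix a primitive embedding $L' \hookrightarrow \Lambda$ with orthogonal complement $M'$; Nikulin's discriminant-form theory realizes $\Lambda$ as the gluing of $L' \oplus M'$ along an anti-isometry $\phi \colon (A_{L'}, q_{L'}) \xrightarrow{\sim} (A_{M'}, -q_{L'})$. Set $M := M'_{\phi(H)}$, which is an even overlattice of $M'$ since $\phi(H)$ is isotropic for $q_{M'}$; the map $\phi$ descends to an anti-isometry $\bar\phi \colon A_L = H^\perp/H \xrightarrow{\sim} \phi(H)^\perp/\phi(H) = A_M$. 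Gluing $L \oplus M$ along the graph of $\bar\phi$ then produces an even lattice of signature $(3, 19)$ with trivial discriminant form, hence unimodular and isometric to $\Lambda$, inside which $L$ sits primitively. Applying the first part gives $L \in \mathcal{L}_{K3}$.

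The main obstacle I expect is the identification $\mathrm{T}(Y) = \phi^{-1}(L)$ in part one: this is where the hypothesis that $X$ is projective is indispensable, via the irreducibility of $\mathrm{T}(X,\Q)$ that rules out extraneous integral $(0,0)$ classes whose $L$-component is non-zero.
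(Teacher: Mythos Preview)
Your proposal is correct and follows essentially the same approach as the paper: surjectivity of the period map for the first assertion, and Nikulin's discriminant-form gluing for the second. Your treatment of the first part is in fact more careful than the paper's, which simply asserts that the resulting $Y$ has $\mathrm{T}(Y)=L$ without spelling out the projectivity of $Y$ or the irreducibility argument you give; these details are implicit in the paper's characterization of $\mathrm{T}(X)$ as the smallest primitive sub-Hodge structure containing the period.
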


\begin{proof}
To prove the first statement, let $L \subset \mathrm{T}(X,\Q)$ and fix a primitive embedding of $L$ into $\Lambda_{K3}$. The lattice $L$ is endowed with a natural Hodge structure, determined by the fact that the inclusion $L \subset \mathrm{T}(X,\Q)$ respects the Hodge decomposition. Endowing $\Lambda_{K3}$ with such Hodge structure, by the surjectivity of the period map, we obtain a K3 surface $Y$ endowed with a Hodge isometry $f \colon \mathrm{T}(Y,\Q) \rightarrow \mathrm{T}(X,\Q)$ such that $f(\mathrm{T}(Y)) = L$.

 For the other statement, we need to show that if a lattice $L$ embedds primitively into $\Lambda_{K3}$, then every even overlattice $L \subset L'$ of finite index embedds primitively into $\Lambda_{K3}$ as well. This follows from Nikulin's theory: a primitive embedding of $L$ into $\Lambda_{K3}$ with orthogonal complement isomorphic to $N$ is determined by an isomorphism of finite quadratic forms $A_L \cong -A_N$. The overlattice $L'$ is determined uniquely by an isotropic subgroup $H \subset A_L$ and there is a natural isomorphism of finite quadratic forms $A_{L'} \cong H^\perp/H$ (\cite[Proposition 1.4.1]{MR525944}). Under the identification $A_L \cong -A_N$, we see that $H$ also determines an overlattice $N'$ of $N$, and that there is an isomorphism of quadratic forms $A_{L'} \cong - A_{N'}$. This shows that there is a primitive embedding of $L'$ into a unimodular, even lattice of signature type $(3,19)$ with orthogonal complement isomorphic to $N'$. Since the K3 lattice is the unique unimodular even lattice of signature $(3,19)$, we conclude. 
\end{proof}
In particular, when $\rho(X) \geq 12$, the map $ \mathcal{T} \colon \mathcal{H}_X \rightarrow U_X \backslash \mathcal{L}$ is a bijection because every even lattice of rank smaller then $10$ embedds primitively into the K3 lattice, and because, in this case, $\mathrm{T}(X)$ determines $X$ uniquely. 

In the next, we make some considerations that often appear in the theory of Shimura varieties and moduli spaces of K3 surfaces. An adelic point $g \in \mathrm{SO}_X(\A_f)$ of $\mathrm{SO}_X$ is given by a collection of elements $g_p \in \mathrm{SO}_X(\Q_p)$ indexed by each prime $p$ and such that, for almost all $p$, one has $$g_p(\mathrm{T}(X) \otimes \Z_p) = \mathrm{T}(X) \otimes \Z_p,$$ with the equality occurring in $\mathrm{T}(X) \otimes \Q_p$. It is well-known that for any $g \in \mathrm{SO}_X(\A_f)$ and any lattice $L \in \mathcal{L}$, there exists a unique lattice $gL \in \mathcal{L}$ such that, for each prime $p$, one has $gL \otimes \Z_p = g_p(L \otimes \Z_p )$. In this way one lets $\mathrm{SO}_X(\A_f)$ act on $\mathcal{L}$.
\begin{lemma} \label{lemma SO respects L_K3}
The action of $\mathrm{SO}_X(\A_f)$ on $\mathcal{L}$ respects $\mathcal{L}_{K3} \subset \mathcal{L}$.
\end{lemma}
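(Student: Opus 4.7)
The idea is to show that the action of $g \in \mathrm{SO}_X(\A_f)$ preserves the genus of a lattice $L \in \mathcal{L}$, and then to observe that membership in $\mathcal{L}_{K3}$ is a genus-theoretic property, thanks to the criterion extracted in the proof of Proposition \ref{proposition L_K3}.

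First I would check that $L$ and $gL$ lie in the same genus. Both are $\Z$-lattices of maximal rank inside the same quadratic space $\mathrm{T}(X,\Q)$, so they automatically agree in rank and in real signature. For every finite prime $p$, the definition of $gL$ gives $g_p(L \otimes \Z_p) = gL \otimes \Z_p$, and since $g_p \in \mathrm{SO}_X(\Q_p)$ is an isometry of $\mathrm{T}(X,\Q_p)$, this exhibits an isometry $L \otimes \Z_p \xrightarrow{\sim} gL \otimes \Z_p$. Hence $L$ and $gL$ are locally isometric at every prime, and in particular there is an induced isometry of discriminant forms $A_L \cong A_{gL}$.

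Next I would invoke the characterization used in the proof of Proposition \ref{proposition L_K3}: an even lattice $L' \subset \mathrm{T}(X,\Q)$ of maximal rank embeds primitively into $\Lambda_{K3}$ if and only if there exists an even lattice $N$ of complementary signature $(1,\rho(X)-1)$ together with an isomorphism of finite quadratic forms $A_N \cong -A_{L'}$; the graph of such an isomorphism then glues $L' \oplus N$ to an even unimodular lattice of signature $(3,19)$, which is isometric to $\Lambda_{K3}$ by the uniqueness of the even unimodular lattice of signature $(3,19)$. Since $L \in \mathcal{L}_{K3}$, such a lattice $N$ exists for $L$; and because $A_{gL} \cong A_L$, the same $N$ and (a suitable composition of) the same isomorphism witness a primitive embedding of $gL$ into $\Lambda_{K3}$. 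Applying Proposition \ref{proposition L_K3} once more, we conclude $gL \in \mathcal{L}_{K3}$.

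I do not anticipate any serious obstacle here; the content is essentially an unpacking of the adelic formalism, together with a reminder that Nikulin's existence criterion for primitive embeddings into $\Lambda_{K3}$ depends only on the signature and the isomorphism class of the discriminant form. The one point that deserves mild care is noting that the local isometries produced by the $g_p$ automatically encode the global genus data (rank, signature, and all $p$-adic completions), so that no separate argument is required at the archimedean place.
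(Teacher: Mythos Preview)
Your proposal is correct and follows essentially the same strategy as the paper: both arguments first note that $L$ and $gL$ are locally isometric at every finite prime (hence lie in the same genus), and then use Nikulin's theory to conclude that primitive embeddability into $\Lambda_{K3}$ depends only on these local invariants. The only cosmetic difference is that the paper invokes Nikulin's Theorem 1.12.2 directly (the statement that the existence of a primitive embedding into $\Lambda_{K3}$ is governed by local conditions at each prime), whereas you pass through the discriminant-form gluing criterion---which is equivalent, since the isometry class of $A_L$ and the signature are genus invariants.
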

\begin{proof}
Let $g \in \mathrm{SO}_X(\A_f)$, let $L \in \mathcal{L}_{K3}$ and consider $gL \in \mathcal{L}$. The lattices $L$ and $gL$ are locally isometric. By \cite[Theorem 1.12.2]{MR525944} of Nikulin, an even lattice $L$ embedds primitively into the lattice $\Lambda_{K3}$ if and only if some local conditions are satisfied at every prime of $\Z$; from this it follows that also $gL$ embedds primitively in $\Lambda_{K3}$.
\end{proof}

Thus, if $L \in \mathcal{L}_{K3}$, the whole orbit $\mathrm{SO}_X(\A_f) \cdot L$ lives inside $\mathcal{L}_{K3}$. The set $\mathrm{SO}_X(\A_f) \cdot L$ is called the genus of $L$, and it is a classical result that $\mathrm{SO}_X(\A_f) \cdot L$ can be written as a finite union of classes, sets of the form $\mathrm{SO}_X(\Q) \cdot L'$ for some lattices $L' \in \mathcal{L}_{K3}$. If $L$ is an indefinite lattice, then in most of the situations one has that its genus consists only of one class. This for example happens when $\ell(A_L) \leq \mathrm{rank}(L) -2$ and $\mathrm{rank}(L) \geq 3$, see \cite[Theorem 1.13.2]{MR525944}. Nevertheless, the genus may be in general non-trivial. The following notion is a particular case of one introduced by Shimura, see \cite[Chapter 29]{MR2665139}.
\begin{defi}
A lattice $L \in \mathcal{L}$ is said to be \textit{maximal} if there is no other lattice $L' \in \mathcal{L}$ such that $L' \subsetneq L$. We denote the set of maximal lattices as $\mathcal{L}^{\mathrm{max}} \subset \mathcal{L}$ and we denote by $\mathcal{H}_X^{\mathrm{max}}$ the subset of the isogeny class consisting of K3 surfaces $Y$ such that $\mathrm{T}(Y)$ is maximal.   
\end{defi}
Clearly, the action of $\mathrm{SO}_X(\A_f)$ on the set $\mathcal{L}$ also respects $\mathcal{L}^{\mathrm{max}}$. The following is \cite[Lemma 29.2, Lemma 29.9]{MR2665139}.
\begin{prop}
Any lattice $L \in \mathcal{L}$ is contained in some maximal lattice. Moreover, if $L, L' \in \mathcal{L}^{\mathrm{max}}$, then there exists $g \in \mathrm{SO}_X(\A_f)$ such that $L' = gL$.
\end{prop}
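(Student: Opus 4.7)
My plan is to treat the two assertions separately, both relying on standard lattice-theoretic techniques. For existence I would use Nikulin's correspondence between even overlattices and isotropic subgroups of the discriminant form; for transitivity I would use a local-to-global argument whose only non-formal ingredient is the classical local transitivity of $\mathrm{SO}$ on maximal even lattices at each prime.

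For the first claim, Nikulin \cite[Proposition 1.4.1]{MR525944} shows that even lattices $L'$ with $L \subset L' \subset \mathrm{T}(X,\Q)$ of finite index correspond bijectively to isotropic subgroups of the finite discriminant form $A_L = L^\vee/L$. Since $A_L$ is finite, so is the poset of even overlattices of $L$, and any maximal element provides a maximal lattice containing $L$. This description also makes maximality into a local property: using the decomposition $A_L = \bigoplus_p A_L \otimes \Z_p$ into $p$-primary components, one sees that $L$ is maximal in $\mathcal{L}$ if and only if $L \otimes \Z_p$ is a maximal even $\Z_p$-lattice in $\mathrm{T}(X,\Q_p)$ for every prime $p$.

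For the transitivity statement, let $L, L' \in \mathcal{L}^{\mathrm{max}}$. Since both lattices span $\mathrm{T}(X,\Q)$, one has $L \otimes \Z_p = L' \otimes \Z_p$ for all but finitely many primes $p$. At each prime, the classical local theory supplies $g_p \in \mathrm{SO}(\mathrm{T}(X,\Q_p))$ with $g_p(L \otimes \Z_p) = L' \otimes \Z_p$, and we set $g_p = \mathrm{id}$ at the primes where the localizations already agree. The tuple $g = (g_p)$ then defines the required element of $\mathrm{SO}_X(\A_f)$ with $gL = L'$. The main obstacle is precisely this local uniqueness: at odd primes it follows readily from the Jordan decomposition of quadratic $\Z_p$-lattices, but at $p = 2$ the evenness condition imposes extra constraints and the Jordan form is less rigid, so a more delicate case analysis is needed. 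Upgrading $\mathrm{O}(\Q_p)$-transitivity to $\mathrm{SO}(\Q_p)$-transitivity also requires a small additional input, namely the existence of an improper isometry of the maximal lattice that can be used to adjust the determinant. For a complete treatment I would simply quote Shimura \cite[Lemma 29.2, Lemma 29.9]{MR2665139}.
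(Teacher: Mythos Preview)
Your proposal is correct, and in fact the paper itself gives no proof at all: it simply records the statement as \cite[Lemma~29.2, Lemma~29.9]{MR2665139}, precisely the reference you invoke at the end. Your sketch---finiteness of even overlattices via Nikulin's isotropic-subgroup correspondence, and local transitivity glued adelically---is the standard route to Shimura's result and adds useful detail beyond the paper's bare citation.
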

\begin{rmk} \label{remark on class and genus}
The result above says that $\mathcal{L}^{\mathrm{max}}$ consists of a single genus with respect to $\mathrm{SO}_X$.
\end{rmk}
As a corollary to the previous proposition and of Lemma \ref{lemma SO respects L_K3}, we obtain
\begin{cor}
One has that $\mathcal{L}^{\mathrm{max}} \subset \mathcal{L}_{K3}$. In particular, there are always maximal K3 surfaces in each isogeny class. 
\end{cor}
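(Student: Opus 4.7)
The plan is to combine the previous proposition on maximal lattices with Lemma \ref{lemma SO respects L_K3} and the overlattice part of Proposition \ref{proposition L_K3}. Since $\mathrm{T}(X)$ itself lies in $\mathcal{L}_{K3}$, we know that $\mathcal{L}_{K3}$ is non-empty; this is the starting point.

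First, I would produce one maximal lattice that lies in $\mathcal{L}_{K3}$. By the previous proposition, the lattice $\mathrm{T}(X)$ is contained in some $M \in \mathcal{L}^{\mathrm{max}}$. Since $\mathrm{T}(X) \subset M$ is an even overlattice of finite index of an element of $\mathcal{L}_{K3}$, Proposition \ref{proposition L_K3} gives $M \in \mathcal{L}_{K3}$.

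Next, I would upgrade this single example to every element of $\mathcal{L}^{\mathrm{max}}$ using the fact (Remark \ref{remark on class and genus}) that all maximal lattices form a single $\mathrm{SO}_X(\A_f)$-orbit. Given any $L \in \mathcal{L}^{\mathrm{max}}$, write $L = gM$ for some $g \in \mathrm{SO}_X(\A_f)$. Then Lemma \ref{lemma SO respects L_K3}, which says that the action of $\mathrm{SO}_X(\A_f)$ preserves $\mathcal{L}_{K3}$, yields $L \in \mathcal{L}_{K3}$, proving $\mathcal{L}^{\mathrm{max}} \subset \mathcal{L}_{K3}$.

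Finally, to obtain the second sentence, I would unwind the definition of $\mathcal{L}_{K3}$: any $L \in \mathcal{L}^{\mathrm{max}} \subset \mathcal{L}_{K3}$ is, by construction of $\mathcal{L}_{K3}$, of the form $f(\mathrm{T}(Y))$ for some $(Y,f) \in \mathcal{H}^m_X$; applying this to the maximal $M \supset \mathrm{T}(X)$ produced above gives a K3 surface $Y$ in $\mathcal{H}_X$ with $\mathrm{T}(Y)$ maximal, i.e.\ $Y \in \mathcal{H}_X^{\mathrm{max}}$. There is no real obstacle: all the work is already done in Proposition \ref{proposition L_K3}, Lemma \ref{lemma SO respects L_K3}, and the preceding proposition on maximal lattices, and the argument is just an assembly of these three inputs.
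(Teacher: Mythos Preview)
Your proof is correct and follows exactly the route the paper indicates: the paper states the corollary as a direct consequence of the preceding proposition on maximal lattices and Lemma \ref{lemma SO respects L_K3}, and your argument simply spells out those steps. The only addition is that you make explicit the use of the overlattice clause of Proposition \ref{proposition L_K3} to get the first maximal lattice $M \supset \mathrm{T}(X)$ into $\mathcal{L}_{K3}$, a step the paper leaves implicit but which is indeed needed.
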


\subsection{Constructing elements of $\mathcal{H}_X$} \label{subsection constructing from Brauer}
If $A/ \C$ is an abelian variety, one can construct all of its isogeny class by taking quotients of $A$ by its finite subgroups. For K3 surfaces, one does not have similar straightforward ways to build isogenies. On the other hand, due to Proposition \ref{proposition L_K3}, in order to construct isogenous K3 surfaces one only needs a way to construct elements in $\mathcal{L}_{K3}$. It is reasonable to believe that the role of torsion points on abelian varieties should be replaced by elements of $\mathrm{T}(X) \otimes \Q/ \Z$ or, similarly, by Brauer classes. One natural way to construct elements of $\mathcal{L}$ via Brauer classes on $X$ is as follows. Let $B \subset \Br(X)$ be a finite subgroup. Using that $\Br(X) \cong \Hom(\mathrm{T}(X), \Q/ \Z)$ one defines $\ker(B) \subset \mathrm{T}(X)$ as the obvious sublattice. If $\ker(B)$ embedds primitively into the K3 lattice (this is automatically true when $\mathrm{rank}(\mathrm{T}(X)) \leq 10$) then there is at least a K3 surface $Y$ whose transcendental lattice is Hodge isometric to $\ker(B)$, i.e., the K3 surface $Y$ has a Hodge isometry $f \colon \mathrm{T}(Y, \Q) \rightarrow \mathrm{T}(X, \Q)$ such that $f(\mathrm{T}(Y)) = \ker(B)$. The problem in this situation is twofold: first, this will not work generically for K3 surfaces of small Picard rank, and second, even if $\ker(B)$ does embedd primitively into the K3 lattice, the associated K3 surface $Y$ will satisfy $|\mathrm{discr}(\NS(Y))| = |B|^2 \cdot |\mathrm{discr}(\NS(X))|$, and in particular one never obtains transcendental lattices that are in the same genus of $\mathrm{T}(X)$. 

Some examples with $B= \Z / 2 \Z$ have a geometric interpretation, see \cite{zbMATH02217905} and \cite{MR3616011}. Most examples of this situation come from the theory of moduli spaces of sheaves on K3 surfaces developed by Mukai \cite{MR893604}, as we shall now briefly explain. Let $X$ be a complex K3 surface and let $v \in \mathrm{H}^*(X, \Z)$ be a primitive and isotropic Mukai vector. Let $X = M(v)$ be the moduli space of stable sheaves on $X$ whose Mukai vector is $v$, where `stable' is intended with respect to a generic polarization. Then, if $M(v)$ is non-empty, it is a K3 surface. Mukai shows that on $X \times Y$ there is a quasi-universal family, and a slight modification of its Chern class induces an isogeny $f \colon \mathrm{T}(Y) \rightarrow \mathrm{T}(X)$ such that $f(\mathrm{T}(Y)) \subset \mathrm{T}(X)$. In fact, there is a Brauer class $\alpha \in \Br(X) \cong \Hom(\mathrm{T}(X), \Q / \Z)$ such that $\mathrm{ker}(\alpha) = f(\mathrm{T}(Y))$, and in particular the quotient $\mathrm{T}(X)/ f(\mathrm{T}(Y))$ is cyclic. This Brauer class is computed in \cite{MR1902629} and can be seen as an obstruction to the existence of a universal family on $X \times Y$, that is, to $X$ being a fine moduli space: if $\alpha = 0$ then $\mathrm{T}(X) \cong \mathrm{T}(Y)$, the moduli space $X$ is fine, and $X$ and $Y$ are Fourier-Mukai partners. In particular, if $Y$ is maximal, then every such $X$ is a fine moduli space. One sees that via this construction only finitely many lattices in $\mathrm{T}(Y, \Q)$ can be obtained. \cite[Proposition 1.4.1]{MR525944}

We recall some basic lattice theory concerning this situation. For an even lattice $L$ we denote by $A_L \cong L^\vee/L$ its discriminant form, and by $q \colon A_L \rightarrow \Q/ 2 \Z$ the associated quadratic form. Note that $A_L$ is also endowed with a non-denerate pairing $A_L \times A_L \rightarrow \Q/ \Z$. Given an inclusion of lattices $N \subset L$ of finite cokernel, there is a flag
\begin{equation} \label{filtration}
    N \subset L \subset L^\vee \subset N^\vee,
\end{equation}
and $L/N \subset A_N$ is an isotropic subgroup of $A_N$, i.e., the quadratic form $q \colon A_N \rightarrow \Q / 2 \Z$ is zero when restricted to $L/N$. In fact, even overlattices of $N$ corresponds one-to-one to isotropic subgroups of $A_N$, see \cite[Proposition 1.4.1]{MR525944}. One also has a natural isomorphism 
\begin{equation} \label{iso graded}
   N^\vee/L^\vee \cong \Hom(L/N, \Q/ \Z),
\end{equation}
and that $(L/N)^\perp = L^\vee/N$, where the orthogonal is taken with respect to the natural pairing $A_N \times A_N \rightarrow \Q / \Z$, and in particular $(L/N)^\perp/ (L/N) \cong A_L$. In case of $N = \ker(B)$ and $L = \mathrm{T}(X)$, one readily sees that $B \cong \Hom(\mathrm{T}(X)/ \ker(B), \Q/ \Z)$. 
\begin{defi} \label{lower isogeny definition}
The lower isogeny class $\mathcal{H}^\mathrm{low}_X$ of $X$ consists of all the $Y \in \mathcal{H}_X$ for which there exists a Hodge isometry $f \colon \mathrm{T}(Y, \Q) \rightarrow \mathrm{T}(X, \Q)$ such that $f(\mathrm{T}(Y)) = \mathrm{T}(X)$.
\end{defi}

\subsection{Kneser neighbors} \label{subsection Kneser}
Kneser method of neighboring lattices has two important advantages with respect to the situation before: it works for every K3 surface, and it allows one to construct lattices in the same genus of $\mathrm{T}(X)$. The original reference is \cite{zbMATH03127648}, for the following treatment we follow Chapter 3 of \cite{zbMATH07417850}. We give a self contained proof of most of the results using Nikulin's language. Let $L$ be an even lattice, and let $d > 1$ be an integer that is coprime to $\mathrm{discr}(L)$. Denote by $\tilde{q} \colon L \rightarrow \Z$ the quadratic form given by $2\tilde{q}(x) = (x,x)$ where the brackets denote the bilinear form of $L$. Another even lattice $L' \subset L \otimes \Q$ is called a $d$-neighbor of $L$ if $$L / L \cap L' \cong L' / L \cap L' \cong \Z / d \Z. $$
The original definition of Kneser only contemplated the case $d=2$, but his results can be generalized to any integer $d$. 
\begin{thm}[Kneser] \label{Kneser thm}
If $L$ and $L'$ are $d$-neighbors, then $L$ and $L'$ belong to the same genus. 
\end{thm}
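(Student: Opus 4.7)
The plan is to verify that $L$ and $L'$ lie in the same genus by checking local isometry prime-by-prime. Since $L\otimes\Q=L'\otimes\Q$, the two lattices already agree at the real place (same signature), so it suffices to show $L\otimes\Z_{p}\cong L'\otimes\Z_{p}$ for every rational prime $p$.

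I would first set $M=L\cap L'$, so that $[L:M]=[L':M]=d$ and consequently $\mathrm{discr}(L)=\mathrm{discr}(L')=\mathrm{discr}(M)/d^{2}$; in particular the coprimality hypothesis yields $\gcd(d,\mathrm{discr}(L'))=1$ as well. For primes $p\nmid d$ the index $d$ is already a unit in $\Z_{p}$, so $L\otimes\Z_{p}=M\otimes\Z_{p}=L'\otimes\Z_{p}$ as sublattices of $L\otimes\Q_{p}$. For primes $p\mid d$ the condition $p\nmid\mathrm{discr}(L)=\mathrm{discr}(L')$ tells us that $L\otimes\Z_{p}$ and $L'\otimes\Z_{p}$ are both even \emph{unimodular} $\Z_{p}$-lattices sitting inside the common non-degenerate quadratic $\Q_{p}$-space $L\otimes\Q_{p}$.

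To finish I would invoke the local structure theorem that any two maximal lattices in a fixed non-degenerate quadratic space over $\Q_{p}$ are conjugate under its orthogonal group---unimodular even lattices being maximal in their $\Q_{p}$-span. For $p$ odd this drops out of the diagonalization of $\Q_{p}$-forms combined with Witt cancellation, while the $p=2$ case follows from the finer classification of even unimodular $\Z_{2}$-lattices in a prescribed rational quadratic space. The main difficulty I expect is precisely this $p=2$ step: even unimodular $\Z_{2}$-lattices carry additional invariants (the way they split into hyperbolic planes and non-split rank-$2$ summands) that are absent for odd primes, so one has to rely on the fact that, once the ambient $\Q_{2}$-form and the unimodularity are fixed, these finer invariants are forced to coincide. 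Everything else is formal.
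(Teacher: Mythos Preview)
Your argument is correct and follows the same overall strategy as the paper: localize, observe $L_p=L'_p$ for $p\nmid d$, and show that for $p\mid d$ both $L_p$ and $L'_p$ are even unimodular $\Z_p$-lattices in the same $\Q_p$-space, hence isometric. The one genuine difference in execution is how unimodularity of $L'_p$ is obtained. You deduce it immediately from the global identity $\mathrm{discr}(L')=\mathrm{discr}(M)/d^{2}=\mathrm{discr}(L)$, which together with $\gcd(d,\mathrm{discr}(L))=1$ forces $p\nmid\mathrm{discr}(L')$; the paper instead works inside the discriminant form $A_{M_p}$, identifies $L'_p/M_p$ as an isotropic subgroup transverse to $L_p/M_p$, and computes $(L'_p/M_p)^{\perp}/(L'_p/M_p)=0$ via Nikulin's formalism. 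Your route is shorter here, while the paper's computation is what later feeds into Lemmas~\ref{lemma transverse} and~\ref{Lemma on iso of discriminant forms} and the generalization in \S\ref{more neighbors}. For the final step the paper quotes Nikulin's local classification \cite[Corollary~1.9.3]{MR525944} (same rank, same discriminant form, same discriminant in $\Z_p^\times/(\Z_p^\times)^2$) rather than the ``maximal lattices are unique'' statement you invoke; these are equivalent in the unimodular case, and in particular your worry about extra $2$-adic invariants is unnecessary: once both lattices are even unimodular with the same rank and the same $\Q_2$-form (hence the same discriminant unit), they are forced to be isometric.
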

\begin{proof}
Let $M \coloneqq L \cap L'$, and let $p$ be a prime that divides $d$. Then, the lattice $L_p = L \otimes \Z_p$ is unimodular (and even, if $p=2$) and the lattice $L'_p = L' \otimes \Z_p$ is integral (and even, if $p=2$). We have a short exact sequence $0 \rightarrow L_p/M_p \rightarrow A_{M,p} \rightarrow M_p^\vee/L_p \rightarrow 0$ where we denote $A_{M,p} = A_{M} \otimes \Z_p = A_{M \otimes \Z_p}$. The overlattice $L'_p$ of $M_p$ is given by an isotropic subgroup $L'_p/M_p \subset A_{M,p}$ which is transverse to $L_p/M_p$. In particular $A_{M,p} \cong L_p/M_p \oplus L'_p/M_p$, with both summands isomorphic to $\Z/p^n\Z$ where $p^n$ is the maximal power of $p$ that divides $d$. An element $ [v] \in L'_p/M_p$ orthogonal to $L_p/M_p$ is represented by an element $v \in L'_p$ such that $(v,L_p) \subset \Z_p$; since the lattice $L_p$ is unimodular, it is also maximal, and we conclude that $v \in L_p$. By Nikulin \cite[Proposition 1.4.1]{MR525944}, the discriminant form of $L'_p$ is given by $(L'_p/M_p)^{\perp} / (L'_p/M_p) = 0$. In particular, $L_p$ and $L_p'$ are two $p$-adic lattices (even, if $p=2$) with same rank and discriminant forms. Moreover, their discriminants satisfy $\mathrm{discr}(L_p) = p^{2n} \mathrm{discr}(M_p) = \mathrm{discr}(L'_p)$, and one concludes that $L_p$ and $L'_p$ are isometric due to \cite[Corollary 1.9.3]{MR525944}. For primes $p$ that do not divide $d$, one has $L_p = L'_p$, and we conclude the proof. 
\end{proof}
One constructs $d$-neighbors of a lattice $L$ using isotropic lines in $L/ dL$. Since $d$ is coprime to $\mathrm{discr}(L)$, the induced quadratic form $\tilde{q} \colon L/dL \rightarrow \Z / d \Z$ is non-degenerate. A $d$-isotropic line $\ell$ is by definition a cyclic subgroup $\Z / d \Z \subset L/ dL$ which is isotropic with respect to this quadratic form. One has the \textit{line map}
$$ \{d\text{-neighbors of} \,\, L \} \rightarrow \{d\text{-isotropic lines in }L/dL \} $$
that associated to any $d$-neighbor $L'$ of $L$ the line $\ell = \im(dL' \rightarrow L \rightarrow L/dL)$.

\begin{prop} \label{prop line map}
The line map is well defined, and it is a bijection. 
\end{prop}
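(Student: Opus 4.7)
The plan is to split the proof into a well-definedness check, a surjectivity construction, and an injectivity argument; the key technical point will be adjusting the lift of a generator of an isotropic line so that it produces an \emph{even} neighbor.

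First I would verify well-definedness. Given a $d$-neighbor $L'$ of $L$, set $N = L \cap L'$. Since $L'/N \cong \Z/d\Z$, we have $dL' \subset N \subset L$, so the image of $dL' \to L/dL$ makes sense. Its kernel is $dL \cap dL' = dN$ (an easy check: $dx = dy$ with $x\in L$, $y\in L'$ forces $x=y\in N$), so the image is $dL'/dN \cong L'/N \cong \Z/d\Z$, i.e.\ a cyclic subgroup of order exactly $d$. Moreover, for $v = dw \in dL'$, $(v,v) = d^2(w,w) \in 2d^2\Z$ because $L'$ is even; hence $\tilde q(v) \in d^2\Z$, which in particular is divisible by $d$, so the image is $d$-isotropic.

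Next I would construct an inverse. Given a $d$-isotropic line $\ell$, pick a lift $v \in L$ of a generator of $\ell$, so that the class $\bar v \in L/dL$ has exact order $d$; by isotropy, $(v,v) \in 2d\Z$. Because $\gcd(d,\mathrm{disc}(L)) = 1$, the induced pairing on $L/dL$ is perfect, and the functional $(\cdot,\bar v) \colon L/dL \to \Z/d\Z$ is therefore surjective. Using this surjectivity, replace $v$ by $v + dy$ for a suitable $y \in L$ to arrange $(v,v) \in 2d^2\Z$ (for odd $d$ the condition to solve is $(v,y) \equiv -(v,v)/(2d) \pmod d$, which has a solution; for even $d$ one solves the analogous congruence modulo $2d$, exploiting that $L$ is even). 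Then define
\[
N := \{x \in L : (x,v) \equiv 0 \pmod d\}, \qquad L' := N + \Z \cdot (v/d).
\]
I would check that $N$ has index $d$ in $L$ (from surjectivity of $(\cdot,v)$), that $(v/d, v/d) \in 2\Z$ and $(v/d, N) \subset \Z$ (so $L'$ is even and integral), that $L \cap L' = N$ (a straightforward index count: $[L'\!:\!N] = d$ and $L' \not\subset L$ since $v/d \notin L$), and finally that $L'/N \cong L/N \cong \Z/d\Z$. The image of $dL' \to L/dL$ is then generated by $d\cdot(v/d) = v$, which is $\ell$.

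Finally I would handle injectivity and independence of the chosen lift simultaneously. Suppose $L'$ is any $d$-neighbor with associated line $\ell$, and let $v \in L$ be any lift of a generator of $\ell$; then $v \in dL'$, so for each $x \in L \cap L'$ we have $(x,v) = d(x, v/d) \in d\Z$, showing $L \cap L' \subset \{x \in L : (x,v)\equiv 0 \pmod d\}$. Since both sides have index $d$ in $L$, they coincide; hence $N$ is intrinsic to $\ell$. Writing $L' = N + \Z \cdot w$ with $dw \in L$ mapping to a generator of $\ell$, we may take $w = v/d$ after adjusting $v$ within its $dL$-coset, and so $L' = N + \Z(v/d)$ is uniquely determined by $\ell$. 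This gives the bijection.

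The main obstacle is the even-$d$ case of the adjustment of the lift: one must show that the congruence $2(v,y) + d(y,y) \equiv -(v,v)/d \pmod{2d}$ is solvable in $y$, which uses both the nondegeneracy of the reduced pairing and the evenness of $L$, and one must check that the resulting $L'$ is insensitive to the remaining ambiguity in $y$. Once this is set up, the rest of the argument is bookkeeping with the flag $N \subset L,L' \subset N^\vee$ from~\eqref{filtration}.
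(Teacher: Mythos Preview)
Your approach is essentially the same as the paper's: adjust a lift $v$ of a generator of $\ell$ so that $(v,v)\in 2d^2\Z$, set $N=\{x\in L:(x,v)\equiv 0\pmod d\}$, and put $L'=N+\Z\cdot(v/d)$. The paper is terser (it constructs the inverse and then says ``it is straightforward to check that this is the inverse of the line map''), while you spell out well-definedness and injectivity explicitly. Two comments:

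\medskip
\noindent\textbf{The even-$d$ case is not a genuine obstacle.} The paper works with $\tilde q$ rather than the bilinear form, which makes the adjustment uniform in $d$: writing $\tilde q(v)=dk$, one has $\tilde q(v+dw)=dk+d^2\tilde q(w)+d(v,w)$, so the condition $\tilde q(v+dw)\in d^2\Z$ reduces to $k+(v,w)\equiv 0\pmod d$, solvable by perfectness of the pairing on $L/dL$. Your congruence $2(v,y)+d(y,y)\equiv -(v,v)/d\pmod{2d}$ is the same equation after noting $(v,v)/d=2k$ and $d(y,y)\equiv 0\pmod{2d}$; there is no parity subtlety.

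\medskip
\noindent\textbf{One slip in the injectivity paragraph.} You write ``let $v\in L$ be any lift of a generator of $\ell$; then $v\in dL'$''. This is false: if $v_0\in dL'$ is one such lift, the full coset of lifts is $v_0+dL$, and $v_0+dz\in dL'$ only when $z\in L'\cap L=N$. What is true is that \emph{some} lift lies in $dL'$, and any such lift automatically satisfies $(v,v)\in 2d^2\Z$ since $v/d\in L'$ and $L'$ is even. Your argument is then completed by the observation (which you flag at the end but do not carry out) that if $v,v'$ are two lifts of generators of $\ell$ both satisfying $(v,v),(v',v')\in 2d^2\Z$, then writing $v'=av+dz$ with $\gcd(a,d)=1$ and $z\in L$, the condition $(v',v')\in 2d^2\Z$ forces $(v,z)\equiv 0\pmod d$, i.e.\ $z\in N$; hence $N+\Z(v/d)=N+\Z(v'/d)$ and the inverse is well-defined and injective.
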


\begin{proof}
Let $\ell \subset L/dL$ be an isotropic line generated by $v \in L$. We begin by showing that we can choose $v$ such that $v^2 \in 2d^2 \Z$. For any $w \in L$, we compute $(v+dw)^2 = 2\tilde{q}(v+dw) = 2 \tilde{q}(v) + 2d^2\tilde{q}(w) + 2d(v,w)$. Write $\tilde{q}(v) = dk$ for $k \in \Z$ and consider the equation $k + (v,w) = 0 \mod d.$ Since the bilinear pairing on $L/dL$ is perfect, we deduce that we can always solve this equation, and hence that we can always find a $w$ such that $(v+dw)^2 \in p^2 \Z$. Choose such $v$ and consider the map $h \colon L \rightarrow \Z/d\Z$ given by pairing with it, and let $M=\ker(h)$. Note that $M^\vee$ is generated by $1/d v$ and $L$, and we want to show that the lattice $L'$ generated by $1/d v$ and $M$ is a $d$-neighbor of $L$. By the fact that $v$ generates a group isomorphic to $\Z / d \Z$ one deduces that $L \cap L' = M$, and since $v^2 \in 2d^2 \Z$ one easily sees that $L'$ is even. It is straightforward to check that this is the inverse of the line map.  
\end{proof}
Note that since $d$ and $\mathrm{discr}(L)$ are coprime then $L \otimes \Z/ d\Z \cong \Hom(L, \Z/d) \cong (d^{-1} L)/L.$ 
\begin{lemma}  \label{equation isomorphism isotropic line}
Consider an isotropic line $\ell \subset L \otimes \Z/d\Z$ as a subgroup $\ell \subset \Hom(L, \Z/d)$. Then, $\ker(\ell) = L \cap L'$ where $L'$ is the $d$-neighbor associated to $\ell$. Moreover, one has a natural identification $\Hom(\ell, \Q/ \Z) \cong L/M$.
\end{lemma}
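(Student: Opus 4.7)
The plan is to exploit the perfect pairing on $L/dL$, which exists because $d$ is coprime to $\mathrm{discr}(L)$. This pairing gives the identification $L \otimes \Z/d\Z \cong \Hom(L/dL, \Z/d\Z)$, which in turn agrees with the $d$-torsion of $\Hom(L, \Z/d\Z)$; this is the identification that allows us to view $\ell$ as a subgroup of $\Hom(L, \Z/d\Z)$ in the first place. With this framework in hand, both statements become rather transparent translations of the proof of Proposition \ref{prop line map}.

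For the first assertion, I would pick a generator $v\in L$ of $\ell$ as in the proof of Proposition \ref{prop line map}. Under the identification above, the class of $v$ corresponds to the homomorphism $h\colon L\to \Z/d\Z$ given by $w\mapsto (v,w)\bmod d$. Since $\ell$ is cyclic, generated by $h$ as a subgroup of $\Hom(L,\Z/d\Z)$, its kernel (as a set of maps) is just $\ker(h)$. But in the proof of Proposition \ref{prop line map} this is exactly the lattice $M$, and there it is shown that $L\cap L'=M$. This gives $\ker(\ell)=L\cap L'$.

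For the second assertion, the strategy is to produce the identification from the same pairing. The pairing on $L/dL$ induces, by restriction in one variable, a map
\[
\Phi\colon L/dL\longrightarrow \Hom(\ell,\Z/d\Z),\qquad \bar w\longmapsto \bigl(\bar u\mapsto (u,w)\bmod d\bigr).
\]
By non-degeneracy of the pairing on $L/dL$, $\Phi$ is surjective; its kernel is $\ell^{\perp}$ inside $L/dL$, and $\ell^\perp=\{\bar w\in L/dL : (v,w)\equiv 0\bmod d\}=\ker(h)/dL=M/dL$. Hence $\Phi$ descends to an isomorphism $L/M\xrightarrow{\sim}\Hom(\ell,\Z/d\Z)$. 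Because $\ell$ is annihilated by $d$, we have $\Hom(\ell,\Z/d\Z)=\Hom(\ell,\Q/\Z)$, yielding the claimed natural identification $L/M\cong\Hom(\ell,\Q/\Z)$.

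The arguments are straightforward once one accepts the dictionary between $L\otimes\Z/d\Z$ and $\Hom(L,\Z/d\Z)$; the only point requiring mild care is verifying that the map $\Phi$ and the identification constructed in the preceding proposition are genuinely the same, i.e.\ that ``$\ker$ of a line in $\Hom(L,\Z/d\Z)$'' coincides with the kernel $M$ of the linear form $h$. This is the main (small) obstacle: all other steps are formal consequences of the perfection of the mod-$d$ pairing and the flag $M\subset L\subset L'\subset M^\vee$ used in the proof of Theorem \ref{Kneser thm}.
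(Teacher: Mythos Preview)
Your proof is correct and follows essentially the same route as the paper. For the first assertion, both you and the paper reduce to the explicit construction in the proof of Proposition \ref{prop line map}: one picks a generator $v$ of $\ell$ (with $v^2\in 2d^2\Z$), identifies $\ker(\ell)$ with $\ker(h)=M$, and uses that $L\cap L'=M$ was already verified there. For the second assertion, the paper simply cites the general duality \eqref{iso graded}, i.e.\ $M^\vee/L^\vee\cong\Hom(L/M,\Q/\Z)$, together with the identification of $\ell$ as a subgroup of $\Hom(L,\Q/\Z)$; your argument unpacks this same duality concretely via the perfect mod-$d$ pairing, which is a perfectly valid (and arguably clearer) way to say the same thing.
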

\begin{proof}
The first part follows from the fact that one can choose a generator of $\ell$ to satisfy $v^2 \in 2d^2 \Z$ as explained in the previous proof. The second part is \eqref{iso graded}.
\end{proof}
\begin{lemma} \label{lemma transverse}
Let $L$ and $L'$ be $d$-neighbors, and let $M = L \cap L'$. Let $H \cong \Z/d\Z \subset A$ be the isotropic subgroup associated to the overlattice $L$ and define $H'$ similarly. Then, $H'$ is the unique isotropic subgroup $H' \cong \Z/d\Z  \subset A_M$ with $H \cap H' = 0$.
\end{lemma}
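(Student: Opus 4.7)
The plan is to work prime-locally on the discriminant form, using the decomposition $A_M = \bigoplus_p A_{M,p}$. First I would observe that both $H$ and $H'$ are supported only on the primes dividing $d$: for each prime $p \nmid d$ one has $L_p = L'_p = M_p$, so $H_p = H'_p = 0$ in $A_{M,p}$ and the claim is vacuous there. Everything therefore reduces to a local analysis at each prime $p \mid d$.

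For the transversality $H \cap H' = 0$, I would appeal directly to the proof of Theorem \ref{Kneser thm}, which already shows that at each $p \mid d$ with $p^n \parallel d$ the group $A_{M,p}$ has order $p^{2n}$ and splits as $A_{M,p} = H_p \oplus H'_p$ with $H_p, H'_p$ cyclic isotropic of order $p^n$. This instantly yields $H \cap H' = 0$ and also provides convenient coordinates for the uniqueness step.

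For uniqueness, suppose $H''$ is any isotropic cyclic subgroup of order $d$ in $A_M$ with $H \cap H'' = 0$. Projecting to each $p$-part reduces the claim to the following local statement: with respect to generators $e_1$ of $H_p$ and $e_2$ of $H'_p$ satisfying $q(e_1) = q(e_2) = 0$ and $b(e_1, e_2) = 1/p^n$, the only isotropic cyclic subgroup of $A_{M,p}$ of order $p^n$ transverse to $\langle e_1 \rangle$ is $\langle e_2 \rangle$. A transverse cyclic subgroup of order $p^n$ is generated by some $a e_1 + u e_2$ with $u \in (\Z/p^n\Z)^\times$; rescaling reduces to a generator of the form $a e_1 + e_2$, and the isotropy condition $q(a e_1 + e_2) = 2a/p^n \equiv 0 \pmod{2\Z_p}$ then forces $a \in p^n \Z_p$, so $H''_p = \langle e_2 \rangle = H'_p$ as desired.

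The main subtlety lies at the prime $p = 2$: the bilinear orthogonality condition alone would permit the extra solution $a \equiv 2^{n-1} \pmod{2^n}$, and it is only the finer quadratic form (valued in $\Q_2/2\Z_2$ rather than $\Q_2/\Z_2$) that eliminates it. This is exactly where the hypothesis that the lattices involved are \emph{even} enters the argument.
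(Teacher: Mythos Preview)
Your proof is correct and follows essentially the same approach as the paper: both arguments reduce to computing $q(a+b) = 2(a,b)$ for $a \in H$, $b \in H'$ and then using that the induced pairing $H \times H' \to \Q/\Z$ is perfect to force $a=0$. The paper works globally and more tersely, simply noting that $|A_M| = d^2 \cdot |A_L|$ forces any cyclic $H'' \cong \Z/d\Z$ to lie in $H \oplus H'$ and then invoking the perfect pairing; you instead localize at each prime, pick explicit generators with $b(e_1,e_2)=1/p^n$, and carry out the same computation coordinate-wise. Your version is slightly more explicit about why transversality forces the $H'$-coefficient to be a unit (a point the paper glosses over), and your remark on the role of the $\Q/2\Z$-valued quadratic form at $p=2$ is a correct and worthwhile observation that the paper does not spell out.
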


\begin{proof}
Due to cardinality reason, if another group $H''$ that satisfies the properties in the statement existed, then $H'' \subset H \oplus H'$ necessarily. So one can assume that $H''$ is generated by $a+b$ with $a \in H$ and $b \in H'$. One has $q(a+b) = 2(a,b) \in \Q / 2 \Z$ and one notes that this is never $0$ due to the perfect pairing from the lemma above, unless $a=0$ or $b=0$. 
\end{proof}
We remark that the unicity above fails in general when $L/M$ is not cyclic, that is, the fact that $L/M$ is cyclic is fundamental for the notion of Kneser neighbor and this situation cannot be generalized. 
\begin{defi}
Let $L$ be a lattice. A sublattice $M \subset L$ is said to be $d$-split if $d$ is corpime to $\mathrm{discr}(L)$, the quotient $L/M \cong \Z/d\Z$ and the isotropic subgroup $L/M \subset A_M$ has a unique transversal as in Lemma \ref{lemma transverse}. 
\end{defi}

In this way, one obtains a one-to-one correspondence between $d$-split sublattices, $d$-neighbors, and $d$-isotropic lines: to any $d$-isotropic line $\ell$, one associates the $d$-split sublattice $M \coloneqq \ker(\ell)$. Let $H' \subset A_M$ be the unique isotropic subgroup from Lemma \ref{lemma transverse}. This determines a unique overlattice $L'$ of $M$, and one readily shows that $L'$ is a $d$-neighbor of $L$. Finally, the isotropic $\ell = \im(dL' \rightarrow L \rightarrow L/dL)$ is the one we have started with. As explained in the proof of \ref{prop line map}, one can also construct $L'$ directly from $\ell$.
\begin{defi} \label{defi d-neighbor K3}
A K3 surface $Y \in \mathcal{H}_X$ is a $d$-neighbor of $X$ if $d$ is coprime to $\mathrm{discr}(\NS(X))$ and if there exists a Hodge isometry $f \colon \mathrm{T}(Y, \Q) \rightarrow \mathrm{T}(X, \Q)$ such that $f(\mathrm{T}(Y))$ is a $d$-neighbor of $\mathrm{T}(X)$.
\end{defi}
Due to \ref{proposition L_K3} and \ref{Kneser thm} we conclude that neighboring K3 surfaces can be constructed from isotropic lines in $\mathrm{T}(X) / d \mathrm{T}(X) \cong \Br(X)[d]$, and if $E_X$ is totally real, then the isomorphism classes of transcendental lattices of K3 surfaces that are $d$-neighbors of $X$ correspond one-to-one to isotropic lines in $\mathrm{T}(X) / d \mathrm{T}(X)$. 

\begin{rmk} \label{lang-weil}
As a consequence of the Lang-Weil estimates, one sees that if $\mathrm{rank}(L) \geq 3$ then $d \rightarrow \infty$ implies that $|\{d\text{-isotropic lines} \}| \rightarrow \infty$ too. 
\end{rmk}

We need the following lemma:

\begin{lemma} \label{Lemma on iso of discriminant forms}
Let $L$ be an even lattice and let $L'$ be a lattice in $L \otimes \Q$ with discriminant forms $A$ and $A'$ respectively. There is a natural isomorphism $A \cong A'$ whenever one of the following condition is satisfied: 
\begin{enumerate}
    \item $L,L' \in \mathcal{L}^{\mathrm{max}}$;
    \item $L,L'$ belong to the same genus and $[L \colon L \cap L']$ and $\mathrm{discr}(L)$ are coprime.
\end{enumerate}
\end{lemma}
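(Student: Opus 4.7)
The plan is to argue locally at each prime, exploiting the canonical decomposition
\[ A = \bigoplus_p (L_p^\vee / L_p), \]
where $L_p = L \otimes \Z_p$, and similarly for $A'$. In both cases $L$ and $L'$ lie in the same genus (case (1) by Remark~\ref{remark on class and genus}, case (2) by hypothesis), so $\mathrm{discr}(L) = \mathrm{discr}(L')$ and both components $L_p^\vee/L_p$ and $(L'_p)^\vee/L'_p$ vanish at all primes $p$ not dividing this common discriminant. It therefore suffices to produce natural local identifications $L_p^\vee/L_p \cong (L'_p)^\vee/L'_p$ at each prime $p$ dividing $\mathrm{discr}(L)$.

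For case (2), set $M \coloneqq L \cap L'$. From the chain
\[ [L:M]^2 \cdot \mathrm{discr}(L) = \mathrm{discr}(M) = [L':M]^2 \cdot \mathrm{discr}(L'), \]
combined with $\mathrm{discr}(L) = \mathrm{discr}(L')$, I conclude $[L:M] = [L':M]$; call this common value $n$. The hypothesis says $n$ is coprime to $\mathrm{discr}(L)$, so at every prime $p$ dividing $\mathrm{discr}(L)$ one has $p \nmid n$, and hence $L_p = M_p = L'_p$ as sublattices of $L \otimes \Q_p$. At each such prime the quotients $L_p^\vee/L_p$ and $(L'_p)^\vee/L'_p$ are therefore literally equal, and these local equalities assemble into the desired natural isomorphism $A \cong A'$.

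For case (1), the argument rests on the structural theory of maximal lattices developed by Shimura (see \cite[Chapters 29--30]{MR2665139}). At each prime $p$, both $L_p$ and $L'_p$ are maximal $\Z_p$-lattices in the same quadratic space $V_p \coloneqq L \otimes \Q_p$. The key input I will invoke is that the discriminant form of a maximal $\Z_p$-lattice in $V_p$ is a canonical invariant of $(V_p, q|_{V_p})$: concretely, it is canonically isomorphic to the discriminant form of a maximal lattice in the anisotropic kernel of $V_p$, which is itself canonically determined by $V_p$. Consequently $L_p^\vee/L_p$ and $(L'_p)^\vee/L'_p$ are both canonically identified with the same finite quadratic group, and assembling these identifications over the finitely many relevant primes produces the natural $A \cong A'$.

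The main obstacle is the naturality claim in case (1): one cannot in general arrange $L_p = L'_p$ locally, so the identification has to come from the intrinsic structure of $V_p$ rather than from a direct comparison of sublattices, and this is precisely what Shimura's anisotropic-kernel description of maximal $\Z_p$-lattices supplies. Case (2), by contrast, reduces to a direct bookkeeping step once the index equality $[L:M] = [L':M]$ has been extracted from the discriminant comparison.
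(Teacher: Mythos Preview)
Your argument for case~(2) is correct and in fact cleaner than the paper's: at each prime $p$ dividing $\mathrm{discr}(L)$ you get the literal equality $L_p = M_p = L'_p$ from coprimality of the index, so the local discriminant forms coincide on the nose. The paper instead works globally through the discriminant form $A_M$ of $M = L \cap L'$, decomposing $A_M = H \oplus H' \oplus A''$ where $H = L/M$, $H' = L'/M$, and $A''$ is the $\mathrm{discr}(L)$-torsion part, and then identifying both $A_L \cong H^\perp/H$ and $A_{L'} \cong (H')^\perp/H'$ with $A''$. Both routes give the same natural map; yours avoids the bookkeeping with isotropic subgroups.

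Your case~(1), however, has a genuine gap precisely where you flag it. Shimura's theory tells you that the discriminant form of a maximal $\Z_p$-lattice in $V_p$ is \emph{abstractly} isomorphic to that of the maximal lattice in the anisotropic kernel, but the anisotropic kernel is only a well-defined isometry class, not a canonical subspace of $V_p$: the identification depends on a choice of Witt decomposition, and different choices can give different isomorphisms between $L_p^\vee/L_p$ and $(L'_p)^\vee/L'_p$. This matters because the lemma is later applied (Corollary~\ref{Proposition easier for genus}) to conclude that the isomorphism is $G_K$-equivariant, which requires a construction depending only on the pair $(L,L')$. The paper's fix is to go through the intersection $M = L \cap L'$ as in case~(2): one shows directly that $L^\vee \cap L' = M$ (if $v \in L^\vee \cap L'$ then $L + \Z v$ is even since $(v,v) \in 2\Z$ and $(v,L) \subset \Z$, so maximality of $L$ forces $v \in L$), which translates to $H^\perp \cap H' = 0$ inside $A_M$, and then one obtains the natural chain
\[
A_L \;\cong\; H^\perp/H \;\xrightarrow{\ \sim\ }\; A_M/(H \oplus H') \;\xleftarrow{\ \sim\ }\; (H')^\perp/H' \;\cong\; A_{L'},
\]
with both middle maps bijective by a cardinality count. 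This is visibly functorial in any automorphism of $L \otimes \Q$ preserving both $L$ and $L'$.
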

 \begin{proof}
 Let $L \coloneqq L_1 \cap L_2$. Since $\mathrm{discr}(L_1) = \mathrm{discr}(L_2)$ because $L_1$ and $L_2$ are in the same genus, we see that $$[L_1 \colon L]^2 = \frac{\mathrm{discr}(L)}{\mathrm{discr}(L_1)} = \frac{\mathrm{discr}(L)}{\mathrm{discr}(L_2)} =  [L_2 \colon L]^2, $$
 i.e., that $[L_1 \colon L] = [L_2 \colon L]$. Let $A$ be the finite quadratic form associated to $L$, and let $H_1, H_2 \subset A$ be the isotropic subgroups that determine $L_1$ and $L_2$ respectively. Since $L = L_1 \cap L_2$ by definition, we see that $H_1 \cap H_2 = 0$, and hence that $H_1 \oplus H_2 \subset A$. Moreover, we also have natural isomorphisms $A_1 \cong H_1^{\perp} / H_1$ and $A_2 \cong H_2^{\perp}/H_2$. 
 
 \begin{enumerate}
     \item To prove point (1), we want to show that the map $ H_1^{\perp} / H_1 \rightarrow A/(H_1 \oplus H_2)$ is injective, which is equivalent to $H_1^\perp \cap H_2 = 0$. This is in turn equivalent to $L_1^\vee \cap L_2 = L$. To show this last equality, assume that $v \in L_1^\vee \cap L_2$, and consider the lattice $\tilde{L}_1$ generated by $v$ and $L_1$ (note that this might not be primitive). Since any element of $\tilde{L}_1$ can be written as $av + bw$ with $a,b \in \Z$ and $w \in L_1$, we compute $$(av + bw, av + bw) = a^2(v,v) + 2ab(v,w) + b^2(w,w),$$
 and since $(w,w) \in 2\Z$, $(w,v) \in \Z$ (because $v \in L_1^\vee$) and $(v,v) \in 2\Z$ (because $v \in L_2$) we conclude that $\tilde{L}_1$ must be even, and hence integral. But since $L_1 \subset \tilde{L}_1$ and $L_1$ is maximal by assumption, we must have that $L_1 = \tilde{L}_1$, i.e., that $v \in L_1$. It follows that $L \subset L_1^\vee \cap L_2 \subset L$. Thus we prove the injectivity of  $ H_1^{\perp} / H_1 \rightarrow A/(H_1 \oplus H_2)$. But by cardinality reasons, this map must also be surjective, and hence it is an isomorphism. Carrying out the specular argument for $L_2$, we have constructed natural isomorphisms $$A_1 \cong  H_1^{\perp} / H_1 \cong A/(H_1 \oplus H_2) \cong  H_2^{\perp} / H_2 \cong A_2,$$ and point (1) of the Lemma is proved.

 \item To prove point (2), let $D = \mathrm{discr}(L_1)$. Then, since $|H_1| = |H_2|$ is coprime to $D$, we must have a natural decomposition $A= H_1 \oplus H_2 \oplus A'$, where $A'$ corresponds to the $D$-torsion of $A$. From this one easily shows that $H_1^{\perp} = H_1 \oplus A' $ and $H_2^{\perp} = H_2 \oplus A'$, and proceeds as before.
 
 \end{enumerate}
 \end{proof}
\begin{prop}
Let $X/\C$ be a K3 surface. For any isotropic line $\ell \subset \Br(X)[d]$ one can constuct a well-defined K3 surface $X_\ell$ endowed with a Hodge isometry $f_\ell$ such that $f_\ell(\mathrm{T}(X_\ell))$ is the $d$-neighbor of $\mathrm{T}(X)$ associated to $\ell$, and such that $\NS(X) \cong \NS(X_\ell)$.
\end{prop}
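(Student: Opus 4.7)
The plan is to combine the Kneser correspondence of Proposition \ref{prop line map} with the realization criterion of Proposition \ref{proposition L_K3}, invoking the surjectivity of the period map and global Torelli for well-definedness. First I would apply Proposition \ref{prop line map} to produce the $d$-neighbor $L \subset \mathrm{T}(X,\Q)$ of $\mathrm{T}(X)$ corresponding to $\ell$; the sublattice $L$ inherits a weight-zero Hodge structure from $\mathrm{T}(X,\Q)$ via the equality $L \otimes \Q = \mathrm{T}(X,\Q)$. The task then reduces to realizing this Hodge lattice as the transcendental part of a K3 surface whose N\'eron--Severi lattice is isomorphic to $\NS(X)$.

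The essential lattice-theoretic step is that $L$ admits a primitive embedding into the K3 lattice $\Lambda_{K3}$ whose orthogonal complement is isometric to $\NS(X)$. By Kneser's Theorem \ref{Kneser thm}, $L$ and $\mathrm{T}(X)$ belong to the same genus, and by construction $[\mathrm{T}(X) \colon \mathrm{T}(X) \cap L] = d$ is coprime to $\mathrm{discr}(\mathrm{T}(X)) = \mathrm{discr}(\NS(X))$, so part (2) of Lemma \ref{Lemma on iso of discriminant forms} supplies a natural isomorphism of finite quadratic forms $A_L \cong A_{\mathrm{T}(X)}$. Composing with Nikulin's isomorphism \eqref{Nikulin's isomorphism} gives $A_L \cong -A_{\NS(X)}$, and the existence portion of Nikulin's theory of primitive embeddings then yields the desired embedding $L \hookrightarrow \Lambda_{K3}$ with orthogonal complement isomorphic to $\NS(X)$. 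Endowing $\Lambda_{K3}$ with the Hodge structure that is the inherited one on $L$ and trivial on its orthogonal complement, surjectivity of the period map produces a marked K3 surface $(X_\ell,\phi_\ell)$ realizing this Hodge structure. Defining $f_\ell$ as the restriction of $\phi_\ell \otimes \Q$ to the transcendental part, composed with the identification $L \otimes \Q = \mathrm{T}(X,\Q)$, gives a Hodge isometry with $f_\ell(\mathrm{T}(X_\ell)) = L$ and $\NS(X_\ell) \cong \NS(X)$ tautologically.

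The step I expect to require the most care is well-definedness of $X_\ell$, independently of the chosen primitive embedding $L \hookrightarrow \Lambda_{K3}$. For this I would appeal to the uniqueness portion of Nikulin's theory: any two primitive embeddings of $L$ into $\Lambda_{K3}$ with orthogonal complement isomorphic to $\NS(X)$ differ by an automorphism of $\Lambda_{K3}$, which one constructs by gluing compatible isometries on the two factors via Proposition \ref{Nikulin extension}. Such an automorphism is a Hodge isometry between the two candidate Hodge structures on $\Lambda_{K3}$, and global Torelli for K3 surfaces then furnishes an isomorphism between the resulting K3 surfaces, so the unmarked $X_\ell$ is canonical up to isomorphism.
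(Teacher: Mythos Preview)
Your proposal is correct and follows essentially the same route as the paper: produce the $d$-neighbor $L$ via Proposition~\ref{prop line map}, identify $A_L \cong A_{\mathrm{T}(X)}$ via part~(2) of Lemma~\ref{Lemma on iso of discriminant forms}, compose with \eqref{Nikulin's isomorphism} to glue $L \oplus \NS(X)$ into $\Lambda_{K3}$, and conclude by surjectivity of the period map and global Torelli. The paper's proof is the same in outline, only more terse.

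One remark on your well-definedness paragraph. You phrase it as showing that \emph{any} two primitive embeddings of $L$ into $\Lambda_{K3}$ with complement isometric to $\NS(X)$ yield isomorphic K3 surfaces, and you want the connecting automorphism of $\Lambda_{K3}$ to be a Hodge isometry. For that automorphism to be a Hodge isometry it must restrict to a Hodge isometry of $L$; building it via Proposition~\ref{Nikulin extension} with $\id_L$ on the transcendental factor forces the discrepancy between the two gluing isomorphisms to lie in the image of $O(\NS(X)) \to O(A_X)$, which is not automatic for an arbitrary K3 surface. The paper sidesteps this by never quantifying over embeddings: since the gluing datum $A_L \cong -A_{\NS(X)}$ is the \emph{natural} one from Lemma~\ref{Lemma on iso of discriminant forms}, the overlattice $\Lambda \supset L \oplus \NS(X)$ is already canonical, and the only remaining choice is an isometry $\Lambda \cong \Lambda_{K3}$, which manifestly induces a Hodge isometry between the two resulting Hodge structures. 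So your construction is fine because you also used the natural gluing; it is only the stronger uniqueness claim in your last paragraph that would need extra justification, and it is not actually needed.
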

\begin{proof}
Denote by $\mathrm{T}_\ell(X) \subset \mathrm{T}(X)_\Q$ the $d$-neighbor associated to $\ell$. By the lemma above, there is a natural isomorphism between $A_X$ and the discriminant form of $\mathrm{T}_\ell(X)$ - which we denote by $A$. Consider now the direct sum $\mathrm{T}_\ell(X) \oplus \NS(X) $ endowed with its natural Hodge structure. We get natural identifications $A \cong - A_X \cong -\NS(X)^\vee/ \NS(X)$, which we use to glue the direct sum into the K3 lattice. By the global Torelli, we have constructed a unique projective K3 surface $X_\ell$.
\end{proof}
In order to construct the auxiliary K3 surface, we need to ensure that $\ker(\ell)$ embedds primitively into the K3 lattice.  
\begin{prop}
Let $X/\C$ be a K3 surface and assume that $\rho(X) \geq 3$. For any isotropic line $\ell \subset \Br(X)[d]$ one can construct a K3 surface $Y_\ell$ endowed with a Hodge isometry $g_\ell$ such that $g_\ell(\mathrm{T}(Y_\ell)) = \ker(\ell)$.
\end{prop}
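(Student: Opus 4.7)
The plan is to prove the proposition by realizing $M := \ker(\ell) \subset \mathrm{T}(X,\Q)$, equipped with the Hodge structure it inherits from $\mathrm{T}(X,\Q)$, as the transcendental lattice of a new K3 surface $Y_\ell$. As in the proof of \ref{proposition L_K3}, this reduces to showing that $M$ admits a primitive embedding into the K3 lattice $\Lambda_{K3}$: once this is done, extending the Hodge structure on $M$ to $\Lambda_{K3}$ by declaring the orthogonal complement to be of N\'{e}ron–Severi type and invoking the surjectivity of the period map produces the desired $Y_\ell$ together with a Hodge isometry $g_\ell \colon \mathrm{T}(Y_\ell,\Q) \xrightarrow{\sim} \mathrm{T}(X,\Q)$ satisfying $g_\ell(\mathrm{T}(Y_\ell)) = M$.

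First I would compute $A_M$. Since $M$ is $d$-split in $\mathrm{T}(X)$ and $d$ is coprime to $\mathrm{disc}(\mathrm{T}(X))$, a prime-by-prime decomposition together with Lemma \ref{lemma transverse} produces a canonical isomorphism $A_M \cong -A_X \oplus A_{U(d)}$, where $A_{U(d)}$ is the hyperbolic finite quadratic form on $(\Z/d\Z)^2$ coming from the two transverse isotropic subgroups $\mathrm{T}(X)/M$ and $\mathrm{T}_\ell(X)/M$. In particular $-A_M \cong A_X \oplus A_{U(d)}$, the hyperbolic summand being isomorphic to its negative via $(x,y) \mapsto (x,-y)$.

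By Nikulin's Proposition 1.15.1 of \cite{MR525944}, a primitive embedding $M \hookrightarrow \Lambda_{K3}$ is equivalent to the datum of an even lattice $N$ of signature $(1, \rho(X) - 1)$ with $A_N \cong -A_M$. I would construct such an $N$ by applying the Kneser construction of \ref{subsection Kneser} to $\NS(X)$ itself: since $d$ is coprime to $|\mathrm{disc}(\NS(X))| = |A_X|$ and $\mathrm{rank}(\NS(X)) = \rho(X) \geq 3$, Chevalley–Warning produces an isotropic vector in $\NS(X) \otimes \Ff_p$ for every prime $p \mid d$, which one lifts to an isotropic line $\ell_N \subset \NS(X) \otimes \Z/d\Z$ by Hensel's lemma (at $p = 2$ using that $\NS(X) \otimes \Z_2$ is unimodular, since $2 \mid d$ forces $2 \nmid |A_X|$). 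The associated $d$-split sublattice $N := \ker(\ell_N) \subset \NS(X)$ then has signature $(1, \rho(X) - 1)$ and discriminant form $A_X \oplus A_{U(d)} \cong -A_M$, exactly as required.

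Gluing $M$ and $N$ along the identification $A_N \cong -A_M$ produces the desired primitive embedding $M \hookrightarrow \Lambda_{K3}$, and the construction of $Y_\ell$ and $g_\ell$ proceeds as sketched above. The main obstacle is the existence of $\ell_N$: this is where the hypothesis $\rho(X) \geq 3$ enters, both to guarantee isotropy via Chevalley–Warning and to ensure that the length condition of Nikulin's existence theorem for $N$ holds; the only real delicate point is the behavior at the prime $2$, which is handled by the unimodularity observation above.
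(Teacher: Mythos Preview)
Your argument is correct and takes a genuinely different route from the paper's. The paper simply computes $A_M$ prime by prime---observing that $A_M \otimes \Z_p \cong A_X \otimes \Z_p$ for $p \mid \mathrm{discr}(\NS(X))$ and $A_M \otimes \Z_p \cong (\Z/p^{n_p}\Z)^2$ for $p^{n_p} \| d$---and then invokes Nikulin's abstract existence criterion \cite[Corollary~1.12.3]{MR525944} for primitive embeddings into $\Lambda_{K3}$; the hypothesis $\rho(X) \geq 3$ enters only as the rank of the would-be complement. You instead \emph{construct} that complement explicitly, by running the Kneser machinery on $\NS(X)$ rather than on $\mathrm{T}(X)$: an isotropic line $\ell_N \subset \NS(X)\otimes\Z/d\Z$ (produced via Chevalley--Warning and Hensel, with the $p=2$ case handled by your observation that $2\mid d$ forces $\NS(X)\otimes\Z_2$ to be even unimodular, hence of even rank $\geq 4$) yields a $d$-split sublattice $N\subset\NS(X)$ with $A_N \cong A_X \oplus A_{U(d)} \cong -A_M$, and gluing gives the embedding. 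The paper's route is shorter and leans on a black box; yours is more self-contained and exposes a pleasant symmetry---the passage $X \rightsquigarrow Y_\ell$ replaces \emph{both} $\mathrm{T}(X)$ and $\NS(X)$ by $d$-split sublattices---so that $\NS(Y_\ell)$ is, up to isometry, exhibited concretely inside $\NS(X)$.
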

\begin{proof}
Let $A$ be the discriminant form of $\ker(\ell)$ and let $p$ be a prime number. If $p$ divides $\mathrm{discr}(\NS(X))$ then $A \otimes \Z_p \cong A_X \otimes \Z_p$ and $A \otimes \Z_p \cong \Z/p \oplus \Z/p\Z$ if $p$ divides $d$ and $A \otimes \Z_p = 0$ otherwise. If $\rho(X) \geq 3$ we know from \cite[Corollary 1.12.3]{MR525944} that $\ker(\ell)$ embedds primitively into the K3 lattice, and hence that there is a K3 surface $Y_\ell$ with a Hodge isometry that maps $\mathrm{T}(Y_\ell)$ to $\ker(\ell)$. This is well-defined up to Fourieri-Mukai partners. 
\end{proof}
Also the next is a consequence of Nikulin results. 

\begin{lemma}
In order for $Y_\ell$ to be unique, it is enough that $\rho(X) \geq 4$ and that $\ell(A_X) \leq \rho(X) -2$. 
\end{lemma}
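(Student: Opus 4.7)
The plan is to reduce the uniqueness of $Y_\ell$ (as a K3 surface, not just up to Fourier--Mukai partners) to the uniqueness up to $O(\Lambda_{K3})$-action of a primitive embedding $\ker(\ell) \hookrightarrow \Lambda_{K3}$. Recall from the preceding proposition that $Y_\ell$ is constructed by choosing such an embedding, transporting the Hodge structure from $\mathrm{T}(X,\Q)$ to $\Lambda_{K3}$, and appealing to the surjectivity of the period map together with global Torelli. Two embeddings in the same $O(\Lambda_{K3})$-orbit produce isomorphic K3 surfaces, whereas the Fourier--Mukai ambiguity mentioned in that proposition is precisely parameterized by the $O(\Lambda_{K3})$-orbits on the set of primitive embeddings. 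Hence it suffices to show that the embedding is unique up to $O(\Lambda_{K3})$.

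By Nikulin's uniqueness criterion for primitive embeddings into even unimodular lattices (cf.\ \cite[Sections 1.14--1.15]{MR525944}), this is equivalent to two properties of the orthogonal complement $K$ -- of rank $\rho(X)$, signature $(1,\rho(X)-1)$, and discriminant form $-A_{\ker(\ell)}$: (a) $K$ is the unique element of its genus; and (b) the natural map $O(K)\to O(A_K)$ is surjective. For indefinite even lattices of rank $\geq 3$, both (a) and (b) are guaranteed by the single numerical condition $\mathrm{rank}(K) \geq \ell(A_K) + 2$, via \cite[Theorems 1.13.2 and 1.14.2]{MR525944} respectively.

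The key computation is thus that of $\ell(A_{\ker(\ell)})$. By Definition \ref{defi d-neighbor K3} the integer $d$ is coprime to $\mathrm{discr}(\mathrm{T}(X))$, and $\ker(\ell)$ is the $d$-split sublattice of $\mathrm{T}(X)$ associated to $\ell$; the analysis of Subsection \ref{subsection Kneser} then yields
$$A_{\ker(\ell)} \cong A_{\mathrm{T}(X)} \oplus (\Z/d\Z)^{\oplus 2}$$
as finite quadratic forms. Since the two summands have coprime orders they are supported on disjoint primes, and their lengths combine as a maximum rather than a sum: $\ell(A_{\ker(\ell)}) = \max(\ell(A_X),\,2)$.

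Putting things together, the hypotheses $\rho(X) \geq 4$ and $\ell(A_X) \leq \rho(X) - 2$ yield $\rho(X) \geq \max(\ell(A_X),2) + 2 = \ell(A_K) + 2$, while $\mathrm{rank}(K) = \rho(X) \geq 4 \geq 3$ and $K$ is indefinite; Nikulin's results then deliver (a) and (b), and the conclusion follows. The main subtlety -- and the reason for the particular shape of the hypothesis -- is the length computation: the clause $\rho(X) \geq \ell(A_X) + 2$ accounts for the contribution from $A_{\mathrm{T}(X)}$, while the separate clause $\rho(X) \geq 4$ is needed to absorb the $(\Z/d\Z)^{\oplus 2}$ summand, which still contributes length $2$ to $A_K$ even when $\ell(A_X) \leq 1$.
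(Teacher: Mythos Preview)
Your proof is correct and follows essentially the same route as the paper: reduce uniqueness of $Y_\ell$ to uniqueness of the primitive embedding $\ker(\ell)\hookrightarrow\Lambda_{K3}$ and invoke Nikulin. The paper's proof is a two-line citation of Nikulin (in fact to \cite[Corollary~1.12.3]{MR525944}), whereas you spell out the intermediate steps --- the role of the orthogonal complement $K$, the two conditions (unique in its genus, $O(K)\twoheadrightarrow O(A_K)$), and the length computation $\ell(A_{\ker(\ell)})=\max(\ell(A_X),2)$ --- which is a helpful unpacking rather than a different argument.
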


\begin{proof}
We only need to show that in this case $\ker(\ell)$ embedds uniquely into the K3 lattice. This follows from \cite[Corollary 1.12.3]{MR525944}. 
\end{proof}
The next follows directly from \cite[Proposition 6.6]{MR893604}.
\begin{prop} \label{prop Y}
Assume that $Y_\ell$ exists. Then, both $X$ and $X_\ell$ are moduli spaces of sheaves over some Fourier-Mukai partner of $Y_\ell$.
\end{prop}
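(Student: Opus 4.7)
The plan is to apply Mukai's criterion (Proposition 6.6 of \cite{MR893604}) characterizing when a K3 surface is a fine moduli space of (twisted) sheaves on another K3 surface. Recall Mukai's setup from the discussion in Subsection \ref{subsection constructing from Brauer}: if one has a primitive Hodge isometric embedding $\mathrm{T}(Z_1) \hookrightarrow \mathrm{T}(Z_2)$ between transcendental lattices of two K3 surfaces, with cyclic cokernel, then $Z_2$ is a moduli space of stable sheaves, possibly twisted by a Brauer class corresponding to the quotient character $\mathrm{T}(Z_2)/\mathrm{T}(Z_1) \to \Q/\Z$, on some Fourier--Mukai partner of $Z_1$. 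The Fourier--Mukai partner appears because the extension of $\mathrm{T}(Z_1)$ to the Mukai lattice $\widetilde{\mathrm{H}}(Z_1)$ realizing the primitive vector $v$ of the moduli problem may require modifying $Z_1$ within its FM class.

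Given this, the verification is essentially a bookkeeping exercise using the lattice-theoretic setup built in Subsection \ref{subsection Kneser}. By construction, $g_\ell(\mathrm{T}(Y_\ell)) = \ker(\ell)$ is the $d$-split sublattice of $\mathrm{T}(X)$, so the natural inclusion $\mathrm{T}(Y_\ell) \hookrightarrow \mathrm{T}(X)$ is a primitive embedding with cokernel $\mathrm{T}(X)/\ker(\ell) \cong \Z/d\Z$, which is cyclic. Symmetrically, $\ker(\ell) \subset \mathrm{T}_\ell(X) = f_\ell(\mathrm{T}(X_\ell))$ because $\mathrm{T}_\ell(X)$ is the $d$-neighbor and by definition $\mathrm{T}(X) \cap \mathrm{T}_\ell(X) = \ker(\ell)$; again the quotient is $\Z/d\Z$, which is cyclic. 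Hence both primitive Hodge isometric embeddings
\[
\mathrm{T}(Y_\ell) \hookrightarrow \mathrm{T}(X), \qquad \mathrm{T}(Y_\ell) \hookrightarrow \mathrm{T}(X_\ell)
\]
satisfy the hypotheses of Mukai's criterion, and I will simply invoke it to conclude that $X$ and $X_\ell$ are each realized as moduli spaces of (twisted) sheaves on some Fourier--Mukai partner of $Y_\ell$.

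The only genuine subtlety, and where I expect the main obstacle to lie, is checking that the primitive embeddings $\mathrm{T}(Y_\ell) \hookrightarrow \mathrm{T}(X)$ and $\mathrm{T}(Y_\ell) \hookrightarrow \mathrm{T}(X_\ell)$ extend to primitive embeddings of $\mathrm{T}(Y_\ell)$ into the Mukai lattice of some FM partner of $Y_\ell$ with an isotropic orthogonal complement of the correct form, so that Mukai's theorem really applies. This is where passing to a Fourier--Mukai partner becomes essential: the obstruction to extending is controlled by the discriminant form, and the flexibility offered by moving within the FM class of $Y_\ell$ kills it. The two needed extensions are furnished by the transversality picture from Lemma \ref{lemma transverse}: the two isotropic subgroups of $A_{\mathrm{T}(Y_\ell)}$ of order $d$, namely $\mathrm{T}(X)/\ker(\ell)$ and $\mathrm{T}_\ell(X)/\ker(\ell)$, are the two transverse cyclic isotropics, and each corresponds to a Mukai vector on a (possibly different) FM partner of $Y_\ell$. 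This matches exactly the symmetric role played by $X$ and $X_\ell$ in the construction.
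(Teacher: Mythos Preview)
Your proposal is correct and follows exactly the paper's approach: the paper's proof is a single sentence invoking \cite[Proposition~6.6]{MR893604}, which is precisely what you do, only with more detail spelled out. One small terminological slip worth fixing: the inclusions $\mathrm{T}(Y_\ell)\hookrightarrow \mathrm{T}(X)$ and $\mathrm{T}(Y_\ell)\hookrightarrow \mathrm{T}(X_\ell)$ are \emph{not} primitive embeddings (their cokernels $\Z/d\Z$ are torsion); they are finite-index sublattices with cyclic cokernel, which is exactly the hypothesis Mukai's criterion requires.
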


When $Y_\ell$ does not exists, then the relation between $X$ and its neighbors can be understood in terms of twisted derived equivalences as explained by Huybrechts \cite{MR4014777}. 

\subsection{More neighbors} \label{more neighbors}
In this section we show one way to generalize Kneser correspondence to the case when $d$ and $\mathrm{discr}(L)$ are not coprime. Let $L$ be an even lattice, for any prime $p$, we denote by $L_p = L \otimes \Z_p$. We observe that the notion of neighbors can be considered also for $p$-adic lattices (even, if $p=2$) and in general over every principal ideal domain (see \cite[Definition 3.1.1]{zbMATH07417850}). To `glue' the local notion to the global one, one proceeds as follows. For any prime $p$ such that $L_p$ is unimodular (that is, for any prime $p$ coprime to $\mathrm{discr}(L)$) the $p^n$-neighbors of $L_p$ correspond to isotropic lines in $L_p \otimes \Z_p/p^n\Z_p \cong L/p^nL.$ Decompose $d = \prod_p p^{n_p}$ and let $\ell_p \subset L_p/p^{n_p}L_p$ be isotropic lines. Let $L'_p \subset L_p \otimes \Q_p$ be the neighbor of $L_p$ corresponding to $\ell_p$, where we put $L'_p = L_p$ if $\ell_p = 0$. By strong approximation, there is a unique lattice $L' \subset L_\Q$ such that $L' \otimes \Z_p = L'_p$ for every $p$. This is the $d$-neighbor associated to $\ell = \prod_p \ell_p \subset L/dL$. For an even lattice $L$ and a prime number $p$ one has a modular decomposition $$L_p \cong U_0 \perp U_1(p) \perp \cdots \perp U_n(p^n)$$ where $U_i$ is a $p$-adic unimodular lattice (even, if $p=2$) and $U_i(p^n)$ denotes the lattice $U_i$ with form scaled by a factor of $p^n$. The lattice $U_0$ is a maximal unimodular sublattice of $L_p$, and it follows that one has an isomorphism
$$A_{L_p} = A_{U_1(p)} \perp \cdots \perp A_{U_n(p^n)}.$$

Since a $p$-adic lattice (even, if $p=2$) is uniquely determined by its rank, its discriminant form, and its discriminant modulo squares, one deduces that all the maximal unimodular sublattices of a $p$-adic lattice are isometric.

For any integer $k \geq 0$, we say that a subgroup of $L_p / p^kL_p$ is \textit{non-degenerate} if it is the form $U_0/p^nU_0$ for some maximal unimodular sublattice $U_0 \subset L_p$. Similarly, a subgroup of $\Hom(L_p, \Z/p^n\Z)$ is non-degenerate if it is the image of $U_0$ under the natural map $L_p \rightarrow L_p/p^nL_p \rightarrow  \Hom(L_p, \Z/p^n\Z)$. 
\begin{defi}
\begin{itemize}
\item A line $\ell \cong \Z/p^n\Z \subset L_p/p^nL_p$ is generalized isotropic if it is isotropic with respect to the natural pairing on $L_p/p^nL_p$ and if it is contained in a non-degenerate subgroup. The analogue definition applies to a line $\ell' \subset \Hom(L_p, \Z/p^n)$. If $L$ is an even lattice and $d$ is an integer, we say that a line $\ell \subset L/dL$ is generalized isotropic if $\ell_p$ is generalized isotropic for every prime $p$, where, as before, $\ell_p$ denotes the $p$-part of $\ell = \prod_{p | d} \ell_p$. The analogous definition applies for a line in $\Hom(L, \Z/d\Z)$. 

    \item Let $L_p$ be a $p$-adic lattice (even, if $p=2$). A generalized $p^n$-neighbor is a lattice $L'_p \subset L_p \otimes \Q_p$ that can be written as $L'_p = U_0' \oplus (U_0)^{\perp}$ for some maximal unimodular sublattice $U_0 \subset L_p$, where $U_0'$ is a $p^n$-neighbor of $U_0$. From the global point of view, let $L$ be an even lattice and let $L' \subset L \otimes \Q$. Then $L'$ is a generalized $\prod_p p^{n_p} = d$-neighbor of $L$ if $L' \otimes \Z_p$ is a generalized $p^{n_p}$-neighbor of $L \otimes \Z_p$ for every $p$. 

\item A sublattice $M_p \subset L_p$ is said to be generalized $p^n$-split if it is of the form $M'_p \perp U_0^\perp$ for some maximal unimodular sublattice $U_0 \subset L_p$ and some $p^n$-split sublattice $M'_p$ of $U_0$. Similarly, one has this notion in the global case. 
\end{itemize}
\end{defi}
Clearly, one has a bijection between generalized isotropic lines in $L/dL$ and generalized isotropic lines in $\Hom(L, \Z/d\Z)$ for every $d>1$, and, if $d$ is coprime to $\mathrm{discr}(L)$, all these notions recover the original ones of Kneser. We need the following generalization of Lemma \ref{lemma transverse}:
\begin{lemma} \label{gen lemma transverse}
Let $M_p$ be a generalized $p^n$-split sublattice of a $p$-adic lattice $L_p$. Let $H_p \cong \Z/p^n\Z \subset A_{M_p}$ be the isotropic subgroup associated to the overlattice $M_p \subset L_p$. There exists a unique $H'_p \cong \Z/p^n\Z \subset A_{M_p}$ such that $H_p \cap H'_p =0$, $H'_p$ is isotropic, and one has an isomorphism of quadratic forms $(H'_p)^\perp \cong H'_p \perp  A_{L_p}$.
\end{lemma}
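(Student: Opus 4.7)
The plan is to reduce the statement to the original (coprime) Lemma~\ref{lemma transverse} by exploiting the orthogonal decomposition $L_p = U_0 \perp U_0^\perp$ that underlies the definition of a generalized $p^n$-split sublattice. First I would unfold the definitions: $M_p = M'_p \perp U_0^\perp$ for some maximal unimodular sublattice $U_0 \subset L_p$ and some $p^n$-split sublattice $M'_p \subset U_0$. Since $U_0$ is unimodular, $A_{U_0} = 0$, and the decomposition $L_p = U_0 \perp U_0^\perp$ induces $A_{L_p} = A_{U_0^\perp}$ together with an internal orthogonal splitting $A_{M_p} = A_{M'_p} \perp A_{U_0^\perp}$. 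The quotient $L_p/M_p$ coincides with $U_0/M'_p$, so $H_p$ sits entirely inside the summand $A_{M'_p}$.

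For existence, the candidate is the unique isotropic transversal to $H_p$ in $A_{M'_p}$ provided by Lemma~\ref{lemma transverse} applied to the $p^n$-split sublattice $M'_p \subset U_0$; call it $H'_p$, viewed as a subgroup of $A_{M_p}$. Because $H'_p \subset A_{M'_p}$ and $A_{M'_p}$ is orthogonal to $A_{U_0^\perp}$, the orthogonal inside $A_{M_p}$ decomposes as $(H'_p)^\perp = (H'_p)^{\perp_{A_{M'_p}}} \perp A_{U_0^\perp}$. Since $U_0$ is unimodular, the overlattice of $M'_p$ cut out by $H'_p$ is itself unimodular (this is precisely the local unimodular step in the proof of Theorem~\ref{Kneser thm}), hence $(H'_p)^{\perp_{A_{M'_p}}}/H'_p = 0$, giving $(H'_p)^\perp = H'_p \perp A_{L_p}$ as required.

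For uniqueness, given another $H''_p$ satisfying the three conditions, I would read the isomorphism $(H''_p)^\perp \cong H''_p \perp A_{L_p}$ via the natural inclusion $A_{L_p} = A_{U_0^\perp} \hookrightarrow A_{M_p}$: it asserts that $A_{L_p}$ sits inside $(H''_p)^\perp$ as a direct orthogonal summand complementary to $H''_p$. In particular $A_{L_p} \subset (H''_p)^\perp$, which forces $H''_p \subset A_{L_p}^\perp$; by non-degeneracy of the pairing on $A_{L_p}$ together with the orthogonal decomposition $A_{M_p} = A_{M'_p} \perp A_{L_p}$ one has $A_{L_p}^\perp = A_{M'_p}$. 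Hence $H''_p$ is a cyclic isotropic subgroup of order $p^n$ inside $A_{M'_p}$ transverse to $H_p$, and Lemma~\ref{lemma transverse} applied to $M'_p \subset U_0$ gives $H''_p = H'_p$.

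The delicate point I anticipate is the interpretation of the quadratic-form isomorphism in the statement. Read purely as an abstract isometry, one can combine isotropic classes in $A_{M'_p}$ with isotropic classes in $A_{L_p}$ to manufacture additional cyclic isotropic subgroups transverse to $H_p$ whose orthogonal is abstractly isomorphic to the right-hand side (already for $L_p = U \perp U(p)$, lines of the shape $\langle \overline{e/p}+\overline{e'/p}\rangle$ produce such examples); so uniqueness must be understood by insisting on the natural copy $A_{L_p} = A_{U_0^\perp} \subset A_{M_p}$ fixed by the splitting $M_p = M'_p \perp U_0^\perp$. With that convention in place, the entire argument cleanly reduces to Lemma~\ref{lemma transverse} in the coprime setting.
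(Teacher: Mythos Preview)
Your argument is correct and mirrors the paper's proof almost step for step: both split $A_{M_p} = A_{M'_p} \perp A_{L_p}$, take $H'_p$ from Lemma~\ref{lemma transverse} applied to $M'_p \subset U_0$, and for uniqueness force the $A_{L_p}$-component of a generator of $H''_p$ to vanish before invoking Lemma~\ref{lemma transverse} again. The caveat you raise about reading the isomorphism $(H'_p)^\perp \cong H'_p \perp A_{L_p}$ via the \emph{natural} copy $A_{L_p}=A_{U_0^\perp}\subset A_{M_p}$ is exactly the implicit assumption the paper uses when it deduces ``$(c,d)=0$ for every $d\in A_{L_p}$'' from the hypothesis, so your reading is consistent with theirs.
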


\begin{proof}
By definition, we can write $M_p = M'_p \perp U_0^\perp$ for some maximal unimodular sublattice $U_0 \subset L_p$ and some $p^n$-split sublattice $M_p'$ of $U_0$. We write $A_{M_p} = A_{M'_p} \perp A_{L_p}$. From Lemma \ref{lemma transverse} we know that there exists a unique $H'_p \subset A_{M'_p}$ such that $H'_p$ is isotropic, $H'_p \cap H_p = 0$ and $A_{M'_p} = H_p \oplus H'_p$. It follows that $H'_p$ satisfies the properties required in the statement. To show the uniqueness of $H'_p$, let $H''_p$ be another cyclic subgroup generated by $a +b +c \in A_{M_p} = H_p \oplus H'_p  \perp A_{L_p} $. Since $(H''_p)^\perp \cong H''_p \perp A_{L_p}$ by assumption, we see that for every $d \in A_{L_p}$ one has $(c,d) = 0$. But then $c = 0$, and from the fact that $(a,b,0)$ is isotropic one concludes that $H'_p=H''_p$. 
\end{proof}

Our generalization of Kneser neighbors is then rephrased in the following. 
\begin{prop} \label{gen knes}
Let $L$ be an even lattice and let $d$ be any integer. Then, there is a natural one-to-one correspondence between:

\begin{enumerate}
    \item Generalized $d$-neighbors of $L$;
    \item Generalized isotropic lines in $\Hom(L, \Z/d\Z)$;
    \item Generalized $d$-split sublattices of $L$.
\end{enumerate}
Moreover, if $L'$ is a generalized $d$-neighbor of $L$, there is a natural isomorphism $A_{L'} \cong A_{L}$.

\end{prop}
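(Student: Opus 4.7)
The plan is to reduce the statement to the local, coprime situation treated by Kneser's original correspondence (Proposition \ref{prop line map}), prime by prime. The setup for strong approximation preceding the proposition already shows that a generalized $d$-neighbor $L'$ of $L$ is determined by the collection $(L'_p)_{p \mid d}$ where $L'_p \subset L_p \otimes \Q_p$ is a generalized $p^{n_p}$-neighbor of $L_p$, with $L'_p = L_p$ for $p \nmid d$. Similarly, a generalized isotropic line $\ell \subset \Hom(L, \Z/d\Z)$ decomposes as $\prod_{p \mid d} \ell_p$ with each $\ell_p \subset \Hom(L_p, \Z/p^{n_p}\Z)$ generalized isotropic, and a generalized $d$-split sublattice is determined prime by prime. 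So it suffices to establish the three-way correspondence locally at each prime $p \mid d$ in a manner compatible with the gluing.

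Fix such a prime $p$ and write $n = n_p$. By the modular decomposition $L_p = U_0 \perp U_0^\perp$ with $U_0$ a maximal (even, if $p=2$) unimodular sublattice, the very definitions say that every generalized object splits as the direct sum of $U_0^\perp$ with an ordinary Kneser-type object on $U_0$: a generalized $p^n$-neighbor is $U_0' \perp U_0^\perp$ with $U_0'$ a $p^n$-neighbor of $U_0$; a generalized $p^n$-split sublattice is $M'_p \perp U_0^\perp$ with $M'_p$ a $p^n$-split sublattice of $U_0$; and a generalized isotropic line $\ell_p$ lies by definition inside the non-degenerate subgroup coming from some $U_0$, hence can be viewed as an isotropic line in $U_0/p^nU_0 \cong \Hom(U_0, \Z/p^n\Z)$. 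Since $U_0$ is unimodular, $p^n$ is coprime to $\mathrm{discr}(U_0) = 1$, so the original Kneser correspondence (Proposition \ref{prop line map}, together with the translation to $d$-split sublattices via Lemma \ref{lemma transverse}) applied to $U_0$ gives a bijection between the three classes of objects on $U_0$. Taking the orthogonal direct sum with $U_0^\perp$ throughout yields the bijection for the three generalized classes on $L_p$. The argument that this bijection is independent of the chosen maximal unimodular sublattice $U_0$ is that in all three cases one can extract the datum intrinsically from $L_p$ (the line $\ell_p$ is recovered as the image of $p^n L'_p$ in $L_p/p^nL_p$, and the split sublattice is the kernel of $\ell_p$), so different choices of $U_0$ produce the same global object.

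For the isomorphism $A_{L'} \cong A_L$, work again locally. For $p \nmid d$ we have $L'_p = L_p$ so there is nothing to do. For $p \mid d$, write $L'_p = U_0' \perp U_0^\perp$; since $U_0$ is unimodular and $U_0'$ is a Kneser $p^n$-neighbor of $U_0$, Theorem \ref{Kneser thm} applied locally (or the direct discriminant count in its proof) shows that $U_0'$ is also unimodular, so
\[
A_{L'_p} \;=\; A_{U_0'} \perp A_{U_0^\perp} \;=\; A_{U_0^\perp} \;=\; A_{L_p},
\]
and the isomorphism is natural in the sense that it is induced on the common overlattice $U_0^\perp$. Assembling over all $p$ gives $A_{L'} \cong A_L$.

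The main obstacle I expect is not in the local-to-global or Kneser steps, which are essentially formal once the definitions are aligned, but in checking that the notion of generalized isotropic line is set up correctly so that the three local bijections really are the restriction of bijections on intrinsic data of $L_p$. The requirement that $\ell_p$ be contained in a non-degenerate subgroup is precisely what forces $\ell_p$ to come from some maximal unimodular $U_0$, and this has to be verified carefully so that the correspondence does not depend on which $U_0$ one picks. Once that point is nailed down, the uniqueness statement in Lemma \ref{gen lemma transverse} takes care of the bijection with generalized $d$-split sublattices, and the proof reduces cleanly to the coprime case already established.
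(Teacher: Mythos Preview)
Your proposal is correct and follows essentially the same approach as the paper: reduce to the $p$-adic situation, split off a maximal unimodular summand $U_0$, and invoke the classical Kneser correspondence on $U_0$ together with Lemma \ref{gen lemma transverse} for the split-sublattice step. The only cosmetic difference is that you obtain $A_{L'}\cong A_L$ from the unimodularity of $U_0'$ (via Theorem \ref{Kneser thm}), whereas the paper reads it off directly from Lemma \ref{gen lemma transverse}; these are the same computation.
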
  

\begin{proof}
It is enough to show this correspondence for $p$-adic lattices. Let $L'_p$ be a generalized $p^n$-neighbor of a $p$-adic lattice $L_p$ (even, if $p=2$). By definition, we can write $L'_p = U_0' \perp U_0^\perp$ where $U_0$ is a maximal unimodular sublattice and $U_0'$ is a classical $p^n$-neighbor of $U_0$. Let $\ell_p = \im(p^nL'_p \rightarrow L_p \rightarrow L_p/p^nL_p)$. Since we know that $\im(p^nU'_0 \rightarrow U_0 \rightarrow U_0/p^nU_0)$ is an isotropic line due to Kneser theory, we deduce that $\ell_p \subset U_0/p^nU_0 \subset L_p/p^nL_p$ is a generalized isotropic line. Similarly, let $\ell_p \subset \Hom(L_p, \Z/p^n\Z)$ be a generalized isotropic line belonging to some non-degenerate subgroup $U_0/p^nU_0$. Denote by $\ell'_p \subset \Hom(U_0, \Z/p^n\Z)$ the same isotropic line. Then $\ker(\ell_p) = \ker(\ell'_p) \perp U_0^\perp$ is a generalized $p^n$-split sublattice of $L_p$. Finally, let $M_p \subset L_p$ be a generalized $p^n$-split sublattice and let $H'_p \subset A_{M_p}$ be the unique isotropic subgroup given by the lemma above. Then, the associated overlattice $L'_p$ is easily seen to be a generalized $p^n$-neighbor. 

The natural isomorphism between $A_{L'}$ and $A_L$ also follows from Lemma \ref{gen lemma transverse}. 
\end{proof}

\begin{rmk} \label{remark generalized isotropic lines}
Let $L$ be a lattice that satisfies $\ell(A_L) \leq \mathrm{rank}(L) -3$ and let $N_d(L)$ be the number of its generalized $d$-neighbors. Under these assumption, for every prime $p$, $L/p^nL$ contains a non-degenerate subgroup of the form $(\Z/p^n\Z)^3$. It follows that $N_d(L) \geq d$ for every $d$ and every such $L$.  
\end{rmk}

We can extend this to K3 surfaces in an analogue way as we did before. Then only difference is about the existence of $Y_\ell$ for a generalized isotropic line: the condition $\rho(X) \geq 3$ must be replaced by $\ell(A_L) + 3 \leq \rho(X)$ in order to ensure that every generalized $d$-split sublattice of $\mathrm{T}(X)$ embedds primitively into the K3 lattice.

\subsection{An example} In this section we give an explicit example of the theory just exposed by proving the following proposition:
\begin{prop}
Let $X = \mathrm{Km}(E_1 \times E_2)$ be the Kummer surface associated to the product of two non-isogenous, non-CM elliptic curves $E_1$ and $E_2$. Then, for any odd integer $d$, the $d$-neighbors of $X$ are all of the form $\mathrm{Km}(E_1/C_1 \times E_2/C_2)$ where $C_1 \cong C_2 \cong \Z/ d \Z$. 
\end{prop}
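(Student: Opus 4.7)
The plan is to realize $\mathrm{T}(X)=\mathrm{T}(A)(2)$ as $(H^1(E_1,\Z)\otimes H^1(E_2,\Z))(2)$ with $A=E_1\times E_2$, and then match the Kneser $d$-neighbors of $\mathrm{T}(X)$ to pairs of cyclic $d$-isogenies of the two elliptic factors. Since the $E_i$ are non-isogenous and non-CM, $\NS(A)\cong U$ is generated by the two fiber classes inside $\mathrm{H}^2(A,\Z)\cong U^3$, whence $\mathrm{T}(A)\cong U\oplus U$ and $\mathrm{T}(X)\cong U(2)\oplus U(2)$; in particular $\mathrm{discr}(\mathrm{T}(X))=16$ is coprime to the odd integer $d$, so the classical Kneser theory of \ref{subsection Kneser} applies. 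A Mumford--Tate computation gives $\End_{\Hdg}(\mathrm{T}(X,\Q))=\Q$, so by the discussion following \ref{defi d-neighbor K3} each $d$-neighbor $X_\ell$ of $X$ is determined uniquely by its isotropic line $\ell\subset\Br(X)[d]$.

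First I would identify the isotropic lines themselves. In $L/dL\cong H^1(E_1,\Z/d)\otimes H^1(E_2,\Z/d)$ (the factor of $2$ in the $L(2)$-form is invertible for $d$ odd, so the notion of isotropy is unchanged), the self-pairing of a pure tensor $v_1\otimes v_2$ vanishes automatically; conversely, writing a primitive vector as a $2\times 2$ matrix over $\Z/d$, isotropy is equivalent to the vanishing of its determinant, and an elementary prime-by-prime argument (plus CRT) shows that such matrices are exactly the decomposable tensors. Under the Poincar\'e pairing $E_i[d]\times H^1(E_i,\Z/d)\to\Z/d$, each primitive line $\langle v_i\rangle$ is the annihilator of a unique cyclic subgroup $C_i\subset E_i[d]$ of order $d$, yielding the desired bijection $\ell\leftrightarrow(C_1,C_2)$.

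Next I would realize the neighbor attached to $(C_1,C_2)$ as the Kummer surface $\mathrm{Km}(A')$ with $A'=(E_1/C_1)\times(E_2/C_2)$. The product isogeny $\phi=\phi_1\times\phi_2\colon A\to A'$ has degree $d^4$; pullback gives an inclusion $\phi^*\colon\mathrm{T}(A')\hookrightarrow\mathrm{T}(A)=L$ with image $M=\phi_1^*(H^1(E_1'))\otimes\phi_2^*(H^1(E_2'))$ of index $d^4$, and since each $\phi_i^*$ scales the cup pairing by $d$, the normalization $f=\tfrac1d\phi^*$ is a rational Hodge isometry $\mathrm{T}(\mathrm{Km}(A'))_\Q\xrightarrow{\sim}\mathrm{T}(X)_\Q$ with integral image $\tfrac1d M$.

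The hard step is to show that $\tfrac1d M$ coincides with the $d$-neighbor $L'_\ell$ built from $\ell$ via Proposition~\ref{prop line map}. By construction of the $C_i$ the image of $M$ in $L/dL$ equals $\ell$, so a lift $v\in M$ of a generator yields $\tfrac1d v\in\tfrac1d M$; a direct elementary divisor computation further identifies the $d$-split sublattice $M_\ell$ associated with $\ell$ with $\tfrac1d M\cap L$, and hence $L'_\ell=M_\ell+\langle\tfrac1d v\rangle\subset\tfrac1d M$. A covolume comparison forces equality: $\tfrac1d M$ has covolume $d^{-4}\mathrm{covol}(M)=\mathrm{covol}(L)$ in the ambient metric, while Theorem~\ref{Kneser thm} places $L'_\ell$ in the genus of $\mathrm{T}(X)$, hence at the same covolume. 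Combined with the uniqueness from the first paragraph, this proves every $d$-neighbor of $X$ has the stated form and recovers the bijection $\ell\leftrightarrow(C_1,C_2)\leftrightarrow\mathrm{Km}(E_1/C_1\times E_2/C_2)$; the only technical obstacle is this final lattice identification, the rest amounting to routine unwinding of identifications.
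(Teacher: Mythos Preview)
Your approach is essentially the same as the paper's. Both identify $\mathrm{T}(A)$ with $\Hom(\mathrm{H}^1(E_1,\Z),\mathrm{H}^1(E_2,\Z))$ (equivalently the tensor product), parametrize $d$-isotropic lines by rank-one matrices and hence by pairs $(C_1,C_2)$ of cyclic $d$-subgroups, realize the candidate neighbor as $\tfrac1d(H_1\otimes H_2)$ with $H_i=\phi_i^*\mathrm{H}^1(E_i',\Z)$, and then check it is the Kneser neighbor attached to $\ell$. The paper carries this out by an explicit basis computation ($\mathrm{T}'=\langle dv_1,v_2,v_3,d^{-1}v_4\rangle$, $\mathrm{T}'\cap\mathrm{T}(A)=\langle dv_1,v_2,v_3,v_4\rangle=\ker(\ell)$), whereas you phrase the same computation as ``elementary divisors $+$ covolume''.

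One imprecision worth flagging: the identity $L'_\ell=M_\ell+\langle\tfrac1d v\rangle$ is not valid for an arbitrary lift $v\in M$ of a generator of $\ell$; as in the proof of Proposition~\ref{prop line map} one needs $v^2\in 2d^2\Z$ so that the overlattice is even. This is harmless here because you may take $v$ to be a pure tensor in $H_1\otimes H_2$ (then $v^2=0$), or, more cleanly, skip the covolume step entirely: once you know $\tfrac1d M$ is even (it is isometric to $\mathrm{T}(A')$) and $\tfrac1d M\cap L=M_\ell$, Lemma~\ref{lemma transverse} (uniqueness of the transverse isotropic $\Z/d$) forces $\tfrac1d M=L'_\ell$; equivalently, apply the line-map bijection of Proposition~\ref{prop line map} directly to the $d$-neighbor $\tfrac1d M$, whose associated line is $\mathrm{im}(M\to L/dL)=\ell$. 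Your appeal to $\End_{\Hdg}(\mathrm{T}(X,\Q))=\Q$ yields the sharper bijective statement from the introduction, but is not needed for ``all $d$-neighbors are of this form'', since that already follows from the Kneser bijection at the lattice level together with the fact that $\rho=18\ge 12$ makes $\mathrm{T}$ determine the K3.
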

We recall that the Kummer surface $X$ associated to an abelian surface $A$ is the blow up of $A/ \{ \pm 1 \}$ at its sixteen notes. We also recall that $\mathrm{T}(X)$ uniquely determines $X$ in this case, and that one always has a natural Hodge isometry between $\mathrm{T}(A)(2)$ and $\mathrm{T}(X)$, where the former denotes the transcendental lattice of $A$ with form scaled by a factor of $2$.

Let $E_1, E_2 $ be two non-isogenous, non-CM elliptic curves and let \mbox{$X= \mathrm{Km}(E_1 \times E_2)$} be the associated Kummer surface. One has $\mathrm{T}(A) \cong \mathrm{H}^1(E_1, \Z) \otimes \mathrm{H}^1(E_2, \Z)^\vee.$ Here, the dual $\mathrm{H}^1(E_2, \Z)^\vee$ plays a role mostly due to a reason of weights in Hodge theory. Let $\{e_1,f_2\}$ and $\{e_2,f_2\}$ be basis of $\mathrm{H}^1(E_1, \Z)$ and $\mathrm{H}^1(E_2, \Z)$ respectively, such that $(e_i, f_i) = 1$. Then, $v_1 = e_1 \otimes e_2$, $v_2 = e_1 \otimes f_2$, $v_3 = f_1 \otimes e_1$ and $v_4 = e_2 \otimes f_2$ form a basis of $\mathrm{T}(A)$. With respect to this basis, the Gram matrix of $\mathrm{T}(A)$ takes the following form:
 $$\begin{bmatrix}
    0 & 0 & 0 & 1 \\
    0 & 0 & -1 & 0\\
    0 & -1 & 0 & 0\\
    1 & 0 & 0 & 0
  \end{bmatrix}$$
Under the natural isomorphism $$\mathrm{T}(A)  \cong \Hom(\mathrm{H}^1(E_1, \Z), \mathrm{H}^1(E_2, \Z)),$$ 
an element $f \in \Hom(\mathrm{H}^1(E_1, \Z), \mathrm{H}^1(E_2, \Z))$ represented by the matrix $\begin{bmatrix}
    a & b \\
    c & d 
  \end{bmatrix}$ with respect to the basis $\{e_i, f_i\}$ is sent to $b v_1 + d v_2 - av_3 -c v_4$. Thus, one sees that $(f,f) = \det(f)$. Since $\mathrm{T}(A)$ is unimodular, we can work with any integer $d > 1$. From the computation above, an isotropic line $\ell \subset \mathrm{T}(A) \otimes \Z/d\Z$ is generated by an endormorphism $f \colon \mathrm{H}^1(E_1, \Z/d\Z) \rightarrow \mathrm{H}^1(E_2, \Z/d\Z)$ such that $\mathrm{ker}(f) = \mathrm{im}(f) \cong \Z/ d \Z$. In particular, we have a one-to-one correspondence 
  $$\{ d-\text{isotropic lines of } \mathrm{T}(A) \otimes \Z/ d \Z \} \leftrightarrow \{ (C_1, C_2) \colon C_i \subset E_i(\C)_{\mathrm{tors}}, C_1 \cong C_2 \cong \Z/ d\Z\}.$$
We show now how to associate the corresponding neighbor of $\mathrm{T}(A)$. Let $f \in \mathrm{T}(A) \otimes \Z/ d \Z$ be a generator of an isotropic line, and let $C_1 = \ker(f)$ and $C_2 = \mathrm{im}(f)$, when we see $f$ as a map $\mathrm{H}^1(E_1, \Z/ d\Z) \rightarrow \mathrm{H}^1(E_2, \Z/ d\Z))$. Let $E_i' \coloneqq E_i/C_i $ and put $H_i = \mathrm{im}(\mathrm{H}^1(E_i', \Z) \rightarrow \mathrm{H}^1(E_i, \Z))$, where the images are taken under the pullback in cohomology of the natural quotient map. 
Up to changing the bases, we can assume that $H_i = \Z de_i \oplus \Z f_i$ for $i= 1,2$. Consider $\mathrm{T}' \coloneqq \frac{1}{d} (H_1 \otimes H_2) \subset \mathrm{T}(A) \otimes \Q$, which is generated by $dv_1, v_2, v_3, d^{-1}v_4$. The lattice $\mathrm{T}'$ is isometric to $\mathrm{T}(A)$ and it is clearly a $d$-neighbor of $\mathrm{T}(A)$. Also, one has that $\mathrm{T}' \cap \mathrm{T}(A)$ is generated by $dv_1, v_2, v_3, v_4$, and a quick computation shows that this is the kernel of the isotropic Brauer line. 

The only thing left to show is that the K3 surface associated to $\mathrm{T}'$ is isomorphic to $X' = \mathrm{Km}(E_1' \times E_2')$, i.e., that $\mathrm{T}'$ and $\mathrm{T}(X')$ are Hodge isometric. If we endow $H_i$ with the polarization coming from the inclusion into $\mathrm{H}^1(E_i, \Z)$ we see that $H_i \cong \mathrm{H}^1(E_i', \Z)(d)$. It follows that $H_1 \otimes H_2$ is Hodge isometric to $\mathrm{H}^1(E_1', \Z)(d) \otimes \mathrm{H}^1(E_2', \Z)(d)$ and therefore that $\frac{1}{d} (H_1 \otimes H_2)$ is Hodge isometric to $\mathrm{H}^1(E_1', \Z) \otimes \mathrm{H}^1(E_2', \Z)$.

\section{Fields of definitions} \label{section field of definition}
In this section we determine a field of definition for every K3 surface in $\mathcal{H}_X$. We start with a K3 surface $X / K$ over a number field $K \subset \C$, we pick a K3 surface $(Y,f) \in \mathcal{H}^{m}_{\bar{X}}$ and we determine whether $(Y,f)$ can be defined over $K$ as well. This means that there should exist a K3 surface $Y'/K$ with a Hodge isometry $f' \colon \mathrm{T}(Y'_\C, \Q) \rightarrow \mathrm{T}(X_\C, \Q)$ such that $f'$ is defined over $K$ and such that there is an isomorphism $h \colon \overline{Y'} \xrightarrow{\sim} Y$ such that $f \circ h^*= f'$, where $h^*$ denotes the natural pullback map on the transcendental lattices. 
\begin{thm} \label{main theorem}
Let $(Y,f) \in \tilde{\mathcal{H}}_X$ and assume that $X$ is defined over a number field $K$. Denote by $\rho_X$ the Galois representation associated to $T(\overline{X}, \A_f)$. Then, the data $(Y,f, \NS(Y))$ can defined over $K$ if and only 
\begin{enumerate}
    \item The $\widehat{\Z}$-lattice $f(\mathrm{T}(Y)) \otimes \widehat{\Z}$ is stable under $\rho_X$;
    \item The induced $G_K$-representation on $A_Y$ is trivial.
\end{enumerate}
\end{thm}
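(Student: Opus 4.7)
Assume $Y_0/K$ is a $K$-model of $Y$ with $f$ and $\NS(Y_0)$ defined over $K$. The equivariance underlying diagram \eqref{square galois} (applied with the conventions of the theorem, so that $f\colon \mathrm{T}(Y)\to \mathrm{T}(X)$) reads $\sigma_X^*\circ f = f\circ \sigma_Y^*$ for every $\sigma\in G_K$; this says precisely that $\rho_X(\sigma)$ preserves the image $f(\mathrm{T}(Y))\otimes\widehat{\Z}$ of $\mathrm{T}(\overline{Y_0},\widehat{\Z})$, which is condition (1). For condition (2), $\NS(Y_0)$ defined over $K$ means $G_K$ fixes $\NS(\overline{Y_0})$ pointwise, so the induced action on $A_{\NS(Y_0)}$ is trivial; the Nikulin isomorphism \eqref{Nikulin's isomorphism} is canonical and hence $G_K$-equivariant for the action on $\mathrm{H}^2_\et$, so the action on $A_{\mathrm{T}(Y_0)} \cong A_Y$ is also trivial.

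\textbf{Sufficiency: strategy.} Using Lemma \ref{lemma Neron-Severi}, Corollary \ref{corollary field defi hodge end} and Proposition \ref{prop def hodge isom}, first fix a finite Galois extension $L/K$ over which $Y$ has a model $Y_0/L$, $f$ is defined, and $G_L$ acts trivially on $\NS(\overline{Y_0})$. The plan is to construct a Galois descent datum: for each $\sigma\in G_K$, an isomorphism $\phi_\sigma\colon Y_0^\sigma \xrightarrow{\sim} Y_0$ over $\overline{K}$ that induces the identity on $\NS$, is compatible with $f$ on transcendentals (i.e.\ $\phi_\sigma^* = f^{-1}\circ \sigma_X^* \circ f^\sigma$ on $\mathrm{T}$), and satisfies the cocycle relation $\phi_{\sigma\tau} = \phi_\sigma\circ \sigma(\phi_\tau)$. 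Effective descent for polarized K3 surfaces (whose moduli stack is separated) then yields a $K$-model of the triple $(Y_0, f, \NS(Y_0))$.

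\textbf{Construction and obstacle.} On transcendentals, set $\Psi_\sigma \coloneqq f^{-1}\circ \sigma_X^* \circ f^\sigma$, a $\widehat{\Z}$-isometry $\mathrm{T}(\overline{Y_0^\sigma},\A_f)\to \mathrm{T}(\overline{Y_0},\A_f)$ which is integral by hypothesis (1) and Hodge over $\Q$ (it coincides with the natural étale pullback $\sigma_{Y_0}^*$ on transcendentals). On Néron-Severi, take the canonical identity $\NS(\overline{Y_0^\sigma}) = \NS(\overline{Y_0})$, available because $\NS(Y_0)$ is $G_L$-invariant. Hypothesis (2) says $\Psi_\sigma$ is trivial on $A_{\mathrm{T}(Y_0)}\cong A_Y$, matching the identity on $A_{\NS(Y_0)}$ under \eqref{Nikulin's isomorphism}; Proposition \ref{Nikulin extension} then glues these into an integral Hodge isometry $\widetilde{\Psi}_\sigma$ of $\mathrm{H}^2_{\text{sing}}(Y_{0,\C}^\sigma,\Z)(1)\to \mathrm{H}^2_{\text{sing}}(Y_{0,\C},\Z)(1)$. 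After composing with a reflection in the Weyl group $W(\NS(Y_0))$, which acts trivially both on $\mathrm{T}$ and on the discriminant form (so leaves all prior data intact), we arrange that $\widetilde{\Psi}_\sigma$ sends an ample class to an ample class, and the global Torelli theorem supplies $\phi_\sigma$. The main obstacle is verifying the cocycle identity for $\{\phi_\sigma\}$: by the uniqueness in Proposition \ref{Nikulin extension} this reduces to multiplicativity of $\sigma_{Y_0}^*$ on $\mathrm{T}$ (tautological) and of the identity on $\NS$ (trivial), with the Weyl-group corrections absorbed consistently. Hypothesis (2) is indispensable here, since without it the transcendental action and the trivial Néron-Severi action do not glue at all; this is precisely the one point where the K3 situation genuinely differs from the classical abelian-variety analogue.
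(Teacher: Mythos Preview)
Your overall strategy (Galois descent plus the global Torelli theorem) coincides with the paper's, but the transcendental map you write down is the wrong one, and this breaks the argument.

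You set $\Psi_\sigma \coloneqq f^{-1}\circ \sigma_X^* \circ f^\sigma$ and then observe, correctly, that by the commutative square \eqref{square galois} this equals $\sigma_{Y,\mathrm{tr}}^*$. But $\sigma_{Y,\mathrm{tr}}^*$ is only a $\widehat{\Z}$-linear isometry of \'etale cohomology; it is \emph{not} a $\Q$-linear map and it is not a morphism of Hodge structures (the map $\sigma_Y\colon \overline{Y}\to \overline{Y^\sigma}$ is not a morphism over $\overline{K}$, so there is no induced map on singular cohomology). Hence $\Psi_\sigma$ can never equal the pullback of a $\overline{K}$-isomorphism $\phi_\sigma$, and your claim that $\Psi_\sigma$ is ``Hodge over $\Q$'' is false. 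The paper instead uses the genuine rational Hodge isometry $(f^\sigma)^{-1}\circ f\colon \mathrm{T}(Y,\Q)\to \mathrm{T}(Y^\sigma,\Q)$ (no $\sigma_X^*$), and the work is to show that it is \emph{integral}; this is where hypothesis (1) enters, via the equality $f^\sigma(\mathrm{T}(Y^\sigma))=f(\mathrm{T}(Y))$.

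There is a second, related gap on the N\'eron--Severi side. For $\sigma\in G_K\setminus G_L$ the varieties $Y_0^\sigma$ and $Y_0$ are different $L$-schemes, and there is no ``canonical identity'' $\NS(\overline{Y_0^\sigma})=\NS(\overline{Y_0})$; the only natural map is $(\sigma_{Y,\mathrm{alg}}^*)^{-1}$, which is exactly what the paper uses. This choice already sends ample classes to ample classes, so no Weyl-group correction is needed, and the cocycle identity can be checked directly. Your appeal to Weyl reflections ``absorbed consistently'' is unjustified: once you introduce non-canonical corrections, the cocycle condition need not hold. Finally, the crucial use of hypothesis (2) is not that your $\Psi_\sigma$ is trivial on $A_Y$, but that the self-map $\tilde{\sigma}^*$ of $A_Y$ induced by $\rho_X$ on $f(\mathrm{T}(Y))\otimes\widehat{\Z}$ is trivial; this is what forces $(f^\sigma)^{-1}\circ f$ and $(\sigma_{Y,\mathrm{alg}}^*)^{-1}$ to agree on the discriminant form so that Proposition \ref{Nikulin extension} applies.
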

Note that the analogue result for abelian varieties is almost tautological, and the only fundamental difference between the two situations is point (2).

\begin{proof}
Let $X/ \overline{\Q}$ be a smooth projective variety and let $\sigma \in \Aut(\overline{\Q})$. Denote by $X^\sigma$ the fibre product of $X$ along $\sigma$, and denote by $\sigma_X \colon X \rightarrow X^\sigma$ the induced isomorphism of schemes. Note that $ X^{\sigma}$ is the scheme obtained by applying $\sigma$ to the defying equation of $X$. When $X$ is a K3 surface, we denote by $\sigma_X^* \colon \mathrm{H}^2_{\et}(X^\sigma, \widehat{\Z}(1)) \rightarrow \mathrm{H}^2_{\et}(X, \widehat{\Z}(1))$ the induced isomorphism in \et ale cohomology. Moreover, we write $\sigma_{X, \mathrm{alg}}^*$ and $\sigma_{X, \mathrm{tr}}^*$ to mean respectively the restriction of $\sigma_X^*$ to the algebraic and transcendental part of $\mathrm{H}^2_{\et}(X^{\sigma}, \widehat{\Z}(1))$. 
To produce a model of $X$ over $K$ we use Galois descent and the global Torelli theorem for K3 surfaces. We refer the reader to \cite{poonen2017rational} for a general introduction to Galois descent, and also to \cite{valloni2019fields}, where a similar argument is used. Essentially, we will produce a Galois descent data for $X$ over $K$, which consists, for every $\sigma \in G_K$, of an isomorphism $f_{\sigma} \colon X^{\sigma} \rightarrow X$ such that the assignment $\sigma \mapsto f_\sigma$ satisfies the cocycle condition $f_{\sigma \tau} = f_\sigma \circ (f_\tau)^\sigma$. Corollary 4.4.6 together with Remark 4.4.8 of \cite{poonen2017rational} says that if such data exists, then $X$ admits a model $X_0/K$. This model is characterized by the fact that the Galois action on its $\overline{\Q}$-points is given by the composition 
$$
    X(\overline{\Q}) \xrightarrow{\sigma} X^\sigma(\overline{\Q}) \xrightarrow{f_\sigma} X(\overline{\Q}),
$$
for each $\sigma \in G_K$. Similarly, one has a natural identification $\overline{X}_0 \cong X$ and then the Galois action on $\mathrm{H}^2_{\et}(\overline{X}_0, \widehat{\Z})$ is given by the composition \begin{equation} \label{Equation Galois action} \mathrm{H}^2_{\et}(X, \widehat{\Z}(1)) \xrightarrow{f_\sigma^* } \mathrm{H}^2_{\et}(X^\sigma, \widehat{\Z}(1)) \xrightarrow{\sigma_X^*} \mathrm{H}^2_{\et}(X, \widehat{\Z}(1)). \end{equation} 

Now, if $(Y,f,\NS(Y))$ is defined over $K$, then (1) and (2) are satisfied. To prove the other direction we argue as follows. For $\sigma \in G_K$, we write the diagram \ref{square galois}, taking into account that $\mathrm{T}(X, \widehat{\Z}) = \mathrm{T}(X^\sigma, \widehat{\Z})$ because $X$ is defined over $K$:
    \begin{center}
 \begin{tikzcd} \label{square galois II}
\mathrm{T}(Y^\sigma,\A_f) \arrow{r}{f^\sigma} \arrow{d}{\sigma_{Y, \mathrm{tr}}^*}
& \mathrm{T}(X,\A_f) \arrow{d}{\sigma_{X, \mathrm{tr}}^*} \\
\mathrm{T}(Y,\A_f) \arrow{r}{f}
& \mathrm{T}(X,\A_f).
\end{tikzcd}
    \end{center}
In particular, we have that \begin{equation} \label{equation proof 1}
    \sigma_{X, \mathrm{tr}}^* \circ f^\sigma = f \circ \sigma_{Y, \mathrm{tr}}^*.
\end{equation}
    Due to the fact that $f(\mathrm{T}(Y,\widehat{\Z}))$ is $G_K$-stable, and that $\sigma_{Y, \mathrm{tr}}^*$ induces an identification between $\mathrm{T}(Y^\sigma,\widehat{\Z})$ and $\mathrm{T}(Y,\widehat{\Z})$, we deduce from the diagram above that 
    
    \begin{equation} \label{equation 1 proof field of defi}
        f^{\sigma}(\mathrm{T}(Y^\sigma,\widehat{\Z})) = f(\mathrm{T}(Y,\widehat{\Z})).
    \end{equation} Hence $$f^{\sigma}(\mathrm{T}(Y^\sigma)) = f^{\sigma}(\mathrm{T}(Y^\sigma,\widehat{\Z})) \cap \mathrm{T}(X,\Q) = f(\mathrm{T}(Y,\widehat{\Z})) \cap  \mathrm{T}(X,\Q)  = f(\mathrm{T}(Y)).$$ In particular, the map $(f^{\sigma})^{-1} \circ f \colon \mathrm{T}(Y, \Q) \rightarrow \mathrm{T}(Y^\sigma, \Q)$ sends $\mathrm{T}(Y)$ to $\mathrm{T}(Y^\sigma)$, i.e., it is a integral Hodge isometry. Our aim is to show that the isometry 
    \begin{equation} \label{equation extending}
        ((f^{\sigma})^{-1} \circ f) \oplus ( \sigma_{Y, \mathrm{alg}}^*)^{-1}  \colon \mathrm{T}(Y) \oplus \NS(Y) \rightarrow \mathrm{T}(Y^{\sigma}) \oplus \NS(Y^{\sigma})
    \end{equation} 
    extends to a global Hodge isometry $\mathrm{H}^2_{\text{sing}}(Y, \Z)\rightarrow \mathrm{H}^2_{\text{sing}}(Y^\sigma, \Z).$ To this purpose, denote by $\tilde{\sigma}_{Y^\sigma, \mathrm{tr}}^{*} \colon \mathrm{T}(Y, \widehat{\Z}) \rightarrow \mathrm{T}(Y, \widehat{\Z})$ the unique map that satisfies 
\begin{equation} \label{equation ext proof 1}
    f^\sigma \circ \tilde{\sigma}_{Y^\sigma, \mathrm{tr}}^{*} = \sigma_{X, \mathrm{tr}}^* \circ f^\sigma;
\end{equation} this is nothing but the restriction of $\sigma_{X, \mathrm{tr}}^*$ to $f^\sigma(\mathrm{T}(Y^\sigma, \widehat{\Z}))$ seen as an isometry of $\mathrm{T}(Y^\sigma, \widehat{\Z})$, and it exists because of \eqref{equation 1 proof field of defi} and the fact that $f(\mathrm{T}(Y, \widehat{\Z}))$ is $G_K$-invariant. By point (2) $\tilde{\sigma}_{Y^\sigma, \mathrm{tr}}^{*}$ acts as the identity on $A_{Y^\sigma} \cong A_Y$. From equation \eqref{equation proof 1} and \eqref{equation ext proof 1} we have that $ f^\sigma \circ \tilde{\sigma}_{Y^\sigma, \mathrm{tr}}^{*} = f \circ \sigma_{Y, \mathrm{tr}}^*$, hence that $(f^{\sigma})^{-1} \circ f = \tilde{\sigma}_{Y^\sigma, \mathrm{tr}}^{*} \circ  (\sigma_{Y, \mathrm{tr}}^*)^{-1}.$ From this we conclude that the maps $A_{Y} \rightarrow A_{Y^\sigma}$ induced respectively by $(f^{\sigma})^{-1} \circ f $ and $ (\sigma_{Y, \mathrm{tr}}^*)^{-1}$ coincide. But also the maps $A_{Y} \rightarrow A_{Y^\sigma}$ induced by $(\sigma_{Y, \mathrm{tr}}^*)^{-1}$ and $(\sigma_{Y, \mathrm{alg}}^*)^{-1}$ coincide, because both are restrictions of global $\widehat{\Z}$-isometry $(\sigma_Y^*)^{-1}$. Using Proposition \ref{Nikulin extension}, we conclude that there exists a global Hodge isometry $\mathrm{H}^2_{\text{sing}}(Y^\sigma,\Z) \rightarrow \mathrm{H}^2_{\text{sing}}(Y,\Z) $ which extends \eqref{equation extending}, and since $( \sigma_{Y, \mathrm{alg}}^*)^{-1} $ sends ample line bundles to ample line bundles, this is in turn induced by a unique isomorphism $g_\sigma \colon Y \xrightarrow{\sim} Y^\sigma$ by the global Torelli theorem. Then, we are left to show that the association $\sigma \mapsto (g_\sigma \colon Y \rightarrow Y^\sigma)$ satisfies the cocycle condition, i.e., that $g_\sigma \circ (g_\tau)^{\sigma} = g_{\sigma \tau}$ for any $\sigma, \tau \in G_K$. It is sufficient to check this identity cohomologically. At the level of N\'{e}ron-Severi lattices, we have that $g_\tau^* = (\tau_{Y, \mathrm{alg}}^*)^{-1}$, hence that $((g_\tau)^{\sigma})^* = (\sigma_{Y^{\tau},\mathrm{alg}}^*)^{-1} \circ g_\tau^*  \circ \sigma_Y^* = (\sigma_{Y^{\tau},\mathrm{alg}}^*)^{-1} \circ (\tau_{Y, \mathrm{alg}}^*)^{-1} \circ \sigma_{Y, \mathrm{alg}}^*$. Since $(\sigma \tau)_Y = \sigma_{Y^\tau} \circ \tau_Y$, we readily conclude in this case. At the level of transcendental lattices, we have that $g_\tau^* = (f^\tau)^{-1} \circ f$ and hence that $ ((g_{\tau})^{\sigma})^* = (\sigma_{Y^{\tau},\mathrm{tr}}^*)^{-1} \circ [(f^\tau)^{-1} \circ f] \circ \sigma_{Y, \mathrm{tr}}^*$. Note that $f^\sigma = (\sigma_{X, \mathrm{tr}}^*)^{-1} \circ f \circ \sigma_{Y, \mathrm{tr}}^*$ from which we see that $ ((g_{\tau})^{\sigma})^* \circ g_\sigma^* = (\sigma_{Y^{\tau},\mathrm{tr}}^*)^{-1} \circ (f^\tau)^{-1} \circ \sigma_{X, \mathrm{tr}}^* \circ f = (f^{\sigma \tau})^{-1} \circ f$. Thus, the cocycle condition is proved, and we check finally that both $f$ and $\NS(\overline{Y})$ are defined over $K$. Consider the model $Y_0/K$ constructed with such Galois descent data. We need to show that $G_K$ acts trivially on $\NS(\overline{Y}_0)$. Using \eqref{Equation Galois action}, we know that $\sigma \in G_K$ acts on $\mathrm{H}^2_{\et}(\overline{Y}_0, \widehat{\Z}(1)) \cong \mathrm{H}^2_{\et}(Y, \widehat{\Z}(1))$ as the composition $\sigma_{Y}^* \circ g_\sigma^* $. It is clear that this is the identity on the N\'{e}ron-Severi group, and we are only left to check that $f \circ ( \sigma_{Y, \mathrm{tr}}^* \circ (g_\sigma^*)_\mathrm{tr}) = \sigma_{X, \mathrm{tr}}^* \circ f$ and since $(g_\sigma^*)_\mathrm{tr} = (f^{\sigma})^{-1} \circ f$ this becomes equation \eqref{equation proof 1}.
\end{proof}
\begin{rmk}
Note that the model of $Y$ thus constructed depends on the chosen model of $X$ over $K$.
\end{rmk}
Let $K'/K$ be the smallest extension that satisfies (1) and (2) in the theorem above. Then, this is also the smallest extension over which $(Y,f, \NS(Y))$ can be defined. Hence, the data $(Y,f, \NS(Y))$ admits a smallest field of definition over $K$. 

A direct consequence of Proposition \ref{prop def hodge isom} and Lemma \ref{lemma Neron-Severi} is that a minimal field of definition for $(Y,f, \NS(Y))$ is not too far away from a field of definition for $Y$ of smallest degree:

\begin{cor} \label{corollary minimal field}
Let $L$ be a field of definition for $Y$, and let $K'/K$ be the minimal field extension of $K$ that satisfies (1) and (2) of the theorem above. Then, there exists an extension $L' /L$ of degree $[L' \colon L] \leq C_n \cdot C_h$ such that $K' \subset L'$.  
\end{cor}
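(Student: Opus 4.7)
The plan is to produce $L'$ by successively extending $L$ twice --- first to define the Hodge isometry $f$, then to trivialize the Galois action on $\NS(\overline{Y})$ --- and then to deduce $K' \subset L'$ from the minimality characterization of $K'$ given by Theorem \ref{main theorem}. Since $L \supset K$, both $X$ and the chosen model of $Y$ are defined over $L$, so Proposition \ref{prop def hodge isom} applied over $L$ yields an intermediate extension $L_1/L$ of degree at most $C_h$ over which $f$ is defined. Regarding $Y$ as a K3 surface over $L_1$, Lemma \ref{lemma Neron-Severi} then yields a further extension $L':=L_2/L_1$ of degree at most $C_n$ such that $G_{L'}$ acts trivially on $\NS(\overline{Y})$. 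By construction $[L':L] \leq C_n \cdot C_h$, and it remains to verify conditions (1) and (2) of Theorem \ref{main theorem} for $L'$ in order to conclude $K' \subset L'$.

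For (1), the fact that $f$ is defined over $L_1 \subset L'$ means $f^\sigma = f$ for every $\sigma \in G_{L'}$, so the commutative square \eqref{square galois} (with $X^\sigma = X$ and $Y^\sigma = Y$, as both are defined over $L'$) reads $\rho_X(\sigma) \circ f = f \circ \rho_Y(\sigma)$ on transcendental parts. Since $\rho_Y(\sigma)$ preserves $\mathrm{T}(Y,\widehat{\Z})$, the lattice $f(\mathrm{T}(Y)) \otimes \widehat{\Z}$ is $G_{L'}$-stable inside $\mathrm{T}(\overline{X},\widehat{\Z})$, which is condition (1). For (2), via the $G_{L'}$-equivariant isomorphism $f$, the induced action on $A_Y$ coming from $\rho_X$ coincides with the action induced by $\rho_Y$ on $\mathrm{T}(Y)^\vee/\mathrm{T}(Y)$; through Nikulin's isomorphism \eqref{Nikulin's isomorphism} this corresponds (up to sign) to the action on $\NS(\overline{Y})^\vee/\NS(\overline{Y})$, which is trivial since $G_{L'}$ fixes $\NS(\overline{Y})$ pointwise.

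Finally, the minimal $K'/K$ is well-defined as the fixed field of the closed subgroup $H \subset G_K$ cut out by the two conditions of Theorem \ref{main theorem}; since $L' \supset K$ satisfies both conditions, we have $G_{L'} \subset H$ and hence $K' \subset L'$, completing the proof. The argument is essentially a bookkeeping combination of the uniform bounds already proved in Proposition \ref{prop def hodge isom} and Lemma \ref{lemma Neron-Severi}, so no serious technical obstacle arises; the one point demanding a little care is the transfer from the triviality of the $G_{L'}$-action on $\NS(\overline{Y})$ to condition (2), which proceeds through Nikulin's isomorphism applied to $Y$ together with the Galois-equivariance of $f$ over $L_1$.
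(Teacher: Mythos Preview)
Your proof is correct and follows exactly the route the paper indicates: the corollary is stated as ``a direct consequence of Proposition \ref{prop def hodge isom} and Lemma \ref{lemma Neron-Severi}'', and you have simply spelled out that consequence, including the verification of conditions (1) and (2) over $L'$ via the Galois-equivariance of $f$ and Nikulin's isomorphism. The only point worth flagging is that you assume $K \subset L$ without comment; this is not literally stated in the corollary but is clearly the intended hypothesis (otherwise the bound $C_n \cdot C_h$ would fail by an additional factor of $[KL:L]$), so your reading is the right one.
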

Another corollary comes from Lemma \ref{Lemma on iso of discriminant forms}:
\begin{cor} \label{Proposition easier for genus}
Let $(Y,f) \in \tilde{\mathcal{H}}_X$, and assume that $(X, \NS(X))$ can be defined over a number field $K$. Assume that $f(\mathrm{T}(Y))$ and $\mathrm{T}(X)$ belong to the same genus, then:
    \begin{enumerate}
        \item If $X$ (and hence also $Y$) is maximal, $(Y,f, \NS(Y))$ is defined over $K$ if and only if $f(\mathrm{T}(Y)) \otimes \widehat{\Z}$ is $G_K$-stable. 
        \item If $f$ is such that $[\mathrm{T}(X) \colon f(\mathrm{T}(Y)) \cap \mathrm{T}(X)]$ is coprime to $\mathrm{discr}(\mathrm{T}(X))$, then the same conclusion of (a) is true.  
    \end{enumerate}
\end{cor}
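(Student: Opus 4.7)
The plan is to deduce both statements directly from Theorem \ref{main theorem}: under the genus hypothesis together with Lemma \ref{Lemma on iso of discriminant forms}, condition (2) of the main theorem (triviality of the $G_K$-action on $A_Y$) will turn out to be automatic once condition (1) (the $\widehat{\Z}$-stability of $f(\mathrm{T}(Y)) \otimes \widehat{\Z}$) is granted. Since in both directions of the ``if and only if'' condition (1) of the main theorem is required, the whole content of the corollary is that, under either assumption (1) or (2) of the corollary, condition (2) of Theorem \ref{main theorem} is forced.

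First I would observe that, because $(X,\NS(X))$ is defined over $K$, the group $G_K$ acts trivially on $\NS(\overline{X})$, hence trivially on $A_X = \NS(X)^\vee/\NS(X)$. Via Nikulin's isomorphism \eqref{Nikulin's isomorphism}, this is equivalent to triviality of the $G_K$-action on $A_{\mathrm{T}(X)} = \mathrm{T}(X)^\vee/\mathrm{T}(X)$ (the sign change of the quadratic form plays no role for the underlying $G_K$-action). I would then let $L \coloneqq f(\mathrm{T}(Y))$ and $M \coloneqq \mathrm{T}(X)\cap L$, and note that if $L\otimes \widehat{\Z}$ is $G_K$-stable then so is $M\otimes \widehat{\Z} = (\mathrm{T}(X)\otimes \widehat{\Z})\cap (L\otimes \widehat{\Z})$, and hence $G_K$ acts on the finite quadratic form $A_M$ preserving the two isotropic subgroups $H_1 = \mathrm{T}(X)/M$ and $H_2 = L/M$.

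The key input is that, under either hypothesis (1) or (2) of the corollary, Lemma \ref{Lemma on iso of discriminant forms} applies to the pair $\mathrm{T}(X), L$ in the same genus and produces a natural isomorphism $A_{\mathrm{T}(X)} \cong A_L$ (identifying both sides with $H_1^\perp/H_1 \cong A_M/(H_1\oplus H_2) \cong H_2^\perp/H_2$). The point I want to stress is that this isomorphism is built entirely from the canonical quotient maps associated with the flag $M\subset \mathrm{T}(X), L\subset M^\vee$, so it is $G_K$-equivariant for the natural $G_K$-actions on all the groups involved. Composing with the Hodge isometry $L \cong \mathrm{T}(Y)$ induced by $f$ (and with Nikulin's identification on the $Y$-side), I obtain a $G_K$-equivariant isomorphism $A_X \cong A_Y$. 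Since $G_K$ acts trivially on $A_X$, it acts trivially on $A_Y$, which is precisely condition (2) of Theorem \ref{main theorem}.

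Thus under the genus hypothesis, condition (2) of the main theorem is a formal consequence of condition (1), and the corollary reduces to: ``$(Y,f,\NS(Y))$ is defined over $K$ iff $L\otimes \widehat{\Z}$ is $G_K$-stable''. Case (1) of the corollary is covered by part (1) of Lemma \ref{Lemma on iso of discriminant forms} ($\mathrm{T}(X)$ maximal, hence so is $L$), and case (2) by part (2) of the same lemma (the index condition). The only delicate point I foresee is checking the $G_K$-equivariance of the identification $A_{\mathrm{T}(X)} \cong A_L$; this amounts to verifying that the equalities $L^\vee_p \cap \mathrm{T}(X)_p = M_p$ used inside the lemma are respected by Galois, which follows from $G_K$-stability of $L\otimes \widehat{\Z}$ and the fact that taking duals and intersections commutes with the $G_K$-action on $M^\vee\otimes \widehat{\Z}$.
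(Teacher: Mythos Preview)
Your proposal is correct and follows essentially the same route as the paper: both apply Theorem \ref{main theorem} and use the naturality of the isomorphism in Lemma \ref{Lemma on iso of discriminant forms} to conclude that the induced map $A_X \cong A_{f(\mathrm{T}(Y))}$ is $G_K$-equivariant, so triviality on $A_X$ forces triviality on $A_Y$. Your write-up spells out the equivariance verification in more detail than the paper does, but there is no substantive difference in strategy.
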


\begin{proof}
All the cases follow in the same way, as an application of Lemma \ref{Lemma on iso of discriminant forms} and Theorem \ref{main theorem}. To prove case (1), for example, one notes that if $G_K$ stabilizes $f(\mathrm{T}(Y)) \otimes \widehat{\Z} =: \mathrm{T}$, the the naturality in Lemma \ref{Lemma on iso of discriminant forms} implies that the isomorphism between $A_X = \mathrm{T}(X)^\vee / \mathrm{T}(X)$ and $A_{\mathrm{T}} = \mathrm{T}^\vee / \mathrm{T}$ is $G_K$-equivariant, and since $G_K$ acts trivially on $A_X$ by assumption, the statement follows from Theorem \ref{main theorem}.  
\end{proof}
Note in particular that maximal K3 surfaces behave, in this respect, like abelian varieties. Concerning generalized neighboring K3 surfaces, we have the following 
\begin{cor} \label{proposition field of definition nhbrs}
Let $X/K$ be a K3 surface over a number field $K$ and let $\ell \subset \Br(\overline{X})[d]$ be a generalized isotropic line. Assume that $G_K$ acts trivially on $\NS(\overline{X})$.  
\begin{enumerate}
    \item If $\ell$ is $G_K$-stable, then there is a K3 surface $X_\ell / K$ with $\NS(\overline{X}_{\ell}) = \NS(\overline{X}_{\ell})^{G_K}$ and a Hodge isometry $f \colon \mathrm{T}(X_{\ell,\C}, \Q) \rightarrow \mathrm{T}(X, \Q)$ defined over $K$ such that $f(\mathrm{T}(X_{\ell, \C}))$ is the $d$-neighbor of $\mathrm{T}(X_\C)$ associated to $\ell$.
    \item Assume also that $G_K$ acts trivially on $\ell$, and that $\rho_{\bar{X}} \geq 3$. Then $(Y_\ell, g_\ell, \NS(Y_\ell))$ can be defined over $K$. 
\end{enumerate}
\end{cor}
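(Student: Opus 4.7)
The strategy is to reduce both statements to verifying the two conditions of Theorem \ref{main theorem}; throughout, set $L = \mathrm{T}(X_\C)$. For part (1), fix a Hodge isometry $f \colon \mathrm{T}(X_{\ell,\C}, \Q) \to L\otimes \Q$ whose image on $\mathrm{T}(X_{\ell,\C})$ is the $d$-neighbor $L' \subset L \otimes \Q$ attached to $\ell$. The $G_K$-stability of $L' \otimes \widehat{\Z}$ is local: writing $\ell = \prod_p \ell_p$ and using that each $L'_p$ is the canonical generalized $p^{n_p}$-neighbor determined by $\ell_p$ (cf.\ Proposition \ref{gen knes}), the $G_K$-stability of $\ell$ passes to each $\ell_p$ and hence to each $L'_p$. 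Triviality of the induced action on $A_{X_\ell}$ follows from the canonical isomorphism $A_{L'} \cong A_L$ supplied by Proposition \ref{gen knes}: composing with Nikulin's isomorphism \eqref{Nikulin's isomorphism} identifies the $G_K$-action on $A_{X_\ell}$ with that on $\NS(\overline{X})^\vee/\NS(\overline{X})$, which is trivial by hypothesis. Applying Theorem \ref{main theorem} then produces $X_\ell/K$ with the required $f$ and with trivial Galois action on $\NS(\overline{X_\ell})$.

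For part (2), set $M = \ker(\ell) = g_\ell(\mathrm{T}(Y_\ell))$. Condition (1) of Theorem \ref{main theorem} is immediate because $M \otimes \widehat{\Z}$ is determined by $\ell$, hence $G_K$-stable. The substantive content is condition (2): triviality of the $G_K$-action on $A_{Y_\ell} = M^\vee/M$. Each graded piece of the filtration $M \subset L \subset L^\vee \subset M^\vee$ carries trivial action, since $L^\vee/L \cong A_X$ is trivial by the hypothesis on $\NS(\overline{X})$, while $L/M \cong \Hom(\ell,\Q/\Z)$ (by Lemma \ref{equation isomorphism isotropic line}) and $M^\vee/L^\vee \cong \ell$ (Pontryagin duality) are trivial under the assumption that $G_K$ acts trivially on $\ell$. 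However, triviality on the graded pieces of a length-three filtration does not by itself force triviality on the whole group, and this is where I expect the main obstacle.

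To overcome it, I plan to unwind the isometry constraint explicitly. Following the proof of Proposition \ref{prop line map}, choose $v \in L$ reducing to a generator of $\ell$ with $v^2 \in 2d^2 \Z$, so that $M = \{w \in L \colon (v,w) \in d\Z\}$ and $M^\vee = L^\vee + \Z \cdot \tfrac{v}{d}$. Fix $\sigma \in G_K$ and write $\phi = \alpha \tfrac{v}{d} + \psi \in M^\vee$ with $\alpha \in \Z$ and $\psi \in L^\vee$; the goal is $\sigma \phi - \phi \in M$. Triviality of the action on $\ell$ gives $\sigma v = v + dw$ for some $w \in L$, and expanding $(\sigma v, \sigma v) = (v,v)$ yields $(v,w) = -\tfrac{d}{2}(w,w)$, which lies in $d\Z$ by the evenness of $L$; hence $w \in M$, and the first summand is handled. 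For the second, triviality on $L^\vee/L$ gives $\sigma \psi - \psi \in L$, and using that $\sigma$ acts by isometries,
$$(\sigma \psi - \psi, v) = (\psi, \sigma^{-1}v - v) \in d \cdot (\psi, L) \subset d\Z,$$
since $\sigma^{-1}v \equiv v \pmod{dL}$ and $\psi \in L^\vee$; hence $\sigma \psi - \psi \in M$. Combining, $\sigma \phi - \phi \in M$, which establishes condition (2) and completes the argument via Theorem \ref{main theorem}. The hypothesis $\rho_{\overline{X}} \geq 3$ enters only to ensure the existence of $Y_\ell$ in the first place, while the generalized setting does not affect the argument: at each prime $p$ dividing $d$, one picks $v$ inside a maximal unimodular summand $U_0 \subset L_p$ containing $\ell_p$, and assembles a global $v \in L$ by strong approximation.
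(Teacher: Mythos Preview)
Your argument for part (1) matches the paper's: both invoke the natural isomorphism $A_{L'}\cong A_L$ from Proposition \ref{gen knes}, together with the fact that the $d$-neighbor is canonically determined by the (Galois-stable) line.

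For part (2) your proof is correct, but the paper takes a different and shorter route. The paper works purely inside the discriminant form $A_M$ of $M=\ker(\ell)$: by Lemma \ref{gen lemma transverse} one has a canonical orthogonal decomposition $A_M = (H\oplus H') \perp A_L$, where $H=L/M$ is the isotropic subgroup attached to the overlattice $M\subset L$ and $H'$ is its \emph{unique} transverse. Uniqueness makes $H'$, and hence the whole splitting, Galois-stable; the pairing identifies $H'\cong \Hom(H,\Q/\Z)\cong \ell$, so triviality of $G_K$ on $\ell$ and on $A_X\cong A_L$ forces triviality on each summand of $A_M$ at once. Your approach instead bypasses the splitting and computes directly in $M^\vee$, using the isometry property twice (once via $(\sigma v,\sigma v)=(v,v)$, once via $(\sigma\psi,v)=(\psi,\sigma^{-1}v)$). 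What this buys you is that you never need to check that the decomposition of $A_M$ is Galois-equivariant; what the paper's approach buys is brevity and a transparent reason why the assumptions ``$G_K$ trivial on $\ell$'' and ``$G_K$ trivial on $A_X$'' are exactly the right ones.

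Two small technical points in your write-up. First, the Galois action lives on $L\otimes\widehat{\Z}$, so statements such as ``$\sigma v=v+dw$ with $w\in L$'' should be read with $w\in L\otimes\widehat{\Z}$; the computation goes through verbatim since $A_M$ is finite and evenness persists after $\otimes\,\widehat{\Z}$. Second, in the generalized (non-coprime) case, triviality of $G_K$ on $\ell\subset \Hom(L,\Z/d\Z)$ only gives $\sigma v - v\in d(L^\vee\otimes\widehat{\Z})$ directly, not $\sigma v - v\in d(L\otimes\widehat{\Z})$; to get the latter you should pass through the Galois-equivariant bijection between generalized isotropic lines in $\Hom(L,\Z/d\Z)$ and in $L/dL$, so that your $v$ is a lift of a generator of the corresponding line in $L/dL$. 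Your closing remark about picking $v$ inside a unimodular summand $U_0$ and globalizing is pointing in the right direction, but the cleanest fix is simply to work on the $L/dL$ side from the start.
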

\begin{proof}
Point (1) follows directly from the last part of Proposition \ref{gen knes}. 
To prove point (2), we need to prove that if $G_K$ acts trivially on $\ell$ then it also acts trivially on the discriminant form of $\ker(\ell)$. This follows from Lemma \ref{gen lemma transverse}: let $A$ be the discriminant form of $\ker(\ell)$ and let $H \subset A$ be the isotropic subgroup associated to $\ker(\ell) \subset \mathrm{T}(X)$ and let $H' \subset A$ its unique transverse as in \ref{gen lemma transverse}. It follows that $H' \cong \Hom(H, \Q/ \Z) \cong \ell$, and therefore, if $G_K$ acts trivially on $\ell$ and on $\NS(\overline{X})$ then it also acts trivially on $A$.  
\end{proof}
Our last corollary of the theorem concerns moduli spaces of sheaves on K3 surfaces. Let $Y$ be a complex K3 surface, let $v \in \mathrm{H}^*(Y, \Z)$ be a primitive and isotropic Mukai vector, and let $X=M(v)$ be the associated moduli space. Assume that $X$ is a K3 surface. Let $f \colon \mathrm{T}(Y, \Q) \rightarrow \mathrm{T}(X, \Q)$ be the isogeny induced by the quasi-universal family, so that $f(\mathrm{T}(Y)) \subset \mathrm{T}(X)$. 
\begin{cor} \label{corollary moduli space}
Assume that $X$ has a model $X'$ over a number field $K \subset \C$ and assume that $G_K$ acts trivially on $\NS(\overline{X'})$ and on $A_Y$ as in point (2) of \ref{main thm introduction}. Then, there exists a model $Y'/K$ of $Y$ such that $X'$ is the moduli space of stable sheaves on $Y$. 
\end{cor}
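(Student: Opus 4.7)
The plan is to bootstrap from Theorem~\ref{main theorem}, which is tailor-made for this situation, and then upgrade the resulting descent of $Y$ to a descent of the moduli-space identification $X\cong M_Y(v)$.

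First, I would unpack the hypothesis that $G_K$ acts trivially on $A_Y$ ``as in point (2) of \ref{main thm introduction}''. For that clause to even be meaningful, $f(\mathrm{T}(Y))\otimes\widehat{\Z}$ must already be $G_K$-stable inside $\mathrm{T}(\overline{X'},\widehat{\Z})$; so the hypothesis tacitly furnishes condition (1) of Theorem~\ref{main theorem} as well, and the triviality statement is condition (2). Hence Theorem~\ref{main theorem} applies verbatim and yields a model $Y'/K$ together with a Hodge isometry $f'\colon \mathrm{T}(\overline{Y'}_\C,\Q)\to\mathrm{T}(\overline{X'}_\C,\Q)$ defined over $K$ and identifiable with $f$ under $\overline{Y'}_\C\cong Y$. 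Moreover, the theorem guarantees that $\mathrm{NS}(\overline{Y'})$ is pointwise fixed by $G_K$.

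Next, I would descend the Mukai vector and the polarization. Write $v=(r,L,s)\in \Z\oplus \mathrm{NS}(Y)\oplus\Z$. Since $G_K$ acts trivially on $\mathrm{NS}(\overline{Y'})$, the class $L$ and any ample class $H$ used to define stability are defined over $K$; hence $v$ itself is $G_K$-invariant and defines a moduli problem over $K$. Applying the standard construction of moduli spaces of Gieseker-stable sheaves over a field (e.g.\ via Simpson's GIT), we obtain a projective $K$-scheme $M_{Y'}(v)$ whose base change to $\overline{K}$ is the complex K3 surface $M_Y(v)=X=\overline{X'}$. In particular $M_{Y'}(v)$ is itself a K3 surface over $K$, and we therefore have two $K$-models, $X'$ and $M_{Y'}(v)$, of the same complex K3 surface $X$.

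The main step is then to show the tautological $\overline{K}$-isomorphism $\phi\colon \overline{M_{Y'}(v)}\xrightarrow{\sim}\overline{X'}$ is $G_K$-equivariant, so that it descends to a $K$-isomorphism. By the global Torelli theorem, it suffices to check that $\phi^*$ on $\mathrm{H}^2_{\et}(\overline{X'},\widehat{\Z})(1)$ commutes with Galois. On the transcendental part, Mukai's construction identifies $\phi^*|_{\mathrm{T}}$ with the isogeny $f$ induced by the (quasi-)universal sheaf; this is $G_K$-equivariant precisely because $f$ is defined over $K$ by the first step. On the algebraic part, $\phi^*$ is given by a cohomological formula in terms of $v$ and the Chern class of the quasi-universal family (Mukai's isometry $v^\perp/v \cong \mathrm{H}^2(X,\Z)(1)$), and each ingredient is $G_K$-invariant since $v$ is $G_K$-invariant and $\mathrm{NS}$ of both surfaces is pointwise fixed. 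Combined via Proposition~\ref{Nikulin extension}, the map $\phi^*$ is $G_K$-equivariant, whence $\phi$ descends to a $K$-isomorphism $M_{Y'}(v)\cong X'$.

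The subtle step is the last one: separating out the algebraic and transcendental contributions to $\phi^*$ and showing both are Galois-equivariant. The transcendental side is handled immediately by Theorem~\ref{main theorem}, but on the algebraic side one must be careful that the particular isometry $\mathrm{NS}(M_Y(v))\to v^\perp_{\mathrm{alg}}/\Z v\subset \mathrm{H}^*(Y,\Z)$ coming from Mukai's quasi-universal family really is the one induced by descending $v$ and the polarization to $K$; this is where the full strength of ``$\mathrm{NS}(\overline{X'})$ fixed pointwise'' enters, since it guarantees that the Mukai lattice identification on algebraic cycles is automatic after descent.
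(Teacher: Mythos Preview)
Your proof is correct and follows essentially the same route as the paper: apply Theorem~\ref{main theorem} to obtain $(Y',f',\NS(\overline{Y'}))$ over $K$, transport the Mukai vector $v$ to a $G_K$-invariant vector on $Y'$, form the moduli space over $K$, and then argue via Torelli that the canonical $\overline{K}$-isomorphism with $\overline{X'}$ is Galois-equivariant on second cohomology and hence descends. The paper's proof simply asserts this last equivariance of $g^*$ without spelling out the transcendental/algebraic split, so your version is, if anything, more explicit on that point.
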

\begin{proof}
We know by \ref{main thm introduction} that under these conditions there exists a K3 surface $Y'/K$ together with a $K$-Hodge isometry $f' \colon \mathrm{T}(Y'_\C, \Q) \rightarrow \mathrm{T}(X'_\C, \Q)$ and an isomorphism $h \colon Y'_\C \rightarrow Y$ such that $f' \circ h^*_\mathrm{tr} = f$. 

Let $h^* \colon \mathrm{H}^*(Y, \Z) \rightarrow \mathrm{H}^*(Y'_\C, \Z)$ be the induced map on cohomology, and let $v'=h^*(v)$. Then, $v'$ is defined over $K$ because $G_K$ acts trivially on $\NS(\overline{Y'})$. It follows that we can consider the moduli space $M(v')$ and we have a canonical isomorphism $g \colon M(v')_\C \xrightarrow{\sim} X$. We are only left to show that $M(v') \cong X'$ already over $K$, which follows from the fact that $g^* \colon \mathrm{H}_{\et}^2(\overline{X'}, \widehat{\Z}) \rightarrow \mathrm{H}_{\et}^2(\overline{Y'}, \widehat{\Z})$ commutes with the Galois action. 

\end{proof}

\section{Finiteness and uniformity} \label{finiteness and uniformity}
We begin by proving Theorem \ref{Thm implications}, which is straightforward at this point. 

\begin{proof}[Proof of Theorem \ref{Thm implications}]
 For $i \in \N$, let $X_i$ be K3 surfaces defined over number fields $K_i$ with $[K_i \colon \Q] \leq N$ and let $d_i>0$ be a sequence of integers such that $\Br(\overline{X}_i)[d_i] \subset \Br(\overline{X}_i)^{G_K}$. We can assume without loss of generality that $\NS(\overline{X}_i)^{G_{K_i}} = \NS(\overline{X}_i)$. Assume that every $X_i$ satisfies $\ell(A_X) \leq 19 - \rho(X)$. 
\begin{enumerate}
    \item Assume that $3 + \ell(A) \leq \rho_{\bar{X}_i}$ for every $i$. Since Shafarevich conjecture holds by assumption, let $\mathcal{L}_N$ be the finite set of lattices that can appear as $\NS(\overline{X})$ for $X/K$ a K3 surface with $[K \colon \Q] \leq N$. Let 
     $$M = \max \{ d \in \N \colon \,\, \text{there is} \,\, L_1 , L_2 \in \mathcal{L}_N \,\, \text{such that} \,\, d^2 \mathrm{discr}(L_1) = \mathrm{discr}(L_2)\},$$ 
    then $M < \infty$. Due to Remark \ref{remark generalized isotropic lines} we conclude that there are generalized $d_i$-isotropic lines in $\Br(\overline{X}_i)[d_i]$ for every $i>0$. Choose one of them, $\ell_i  \subset \Br(\overline{X})[d_i]$. Since $G_{K_i}$ acts trivially on $\ell_i \subset \Br(\overline{X})[d_i]$ we have a K3 surface $Y_i/ K_i$ together with a Hodge isometry that identifies $\mathrm{T}(Y_{i, \C})$ with $\ker(\ell)$. But since $\mathrm{discr}(\NS(\overline{Y}_i)) = d_i^2 \cdot \mathrm{discr}(\NS(\overline{X}_i))$ one deduces that $d_i \leq M < \infty$.
    
    \item The proof uses the same ideas, but only part (1) of Proposition \ref{proposition field of definition nhbrs}. Let $N_i$ be the number of generalized $d_i$-isotropic lines in $\mathrm{T}(X_{i, \C})$. Due again to Remark \ref{remark generalized isotropic lines} we know that $N_i \geq d_i$ for every $i$. Since $E_{X_{i \C}}$ is totally real, we conclude that there are at least $N_i$ geometric isomorphism classes of K3 surfaces that are isogenous to $X_i$ and that can be defined over $K_i$. Thus, if Conjecture \ref{conj finiteness isogenous} is true, we conclude that the $d_i$ are again bounded. 
\end{enumerate}
 If $\rho(X_i) < 3 + \ell(A_{X_i})$ then one can use the classical form of Kneser correspondence to conclude analogously for $d_i$ coprime to $\mathrm{discr}(\NS(X_i))$. In this case, we only need $3 \leq \rho_{\bar{X}_i}$ to conclude in the first point. 
\end{proof}

In this last part of the paper, we prove the finiteness stated at the end of the introduction. We use some ideas from the previous sections, but the finiteness itself is independent from \ref{main theorem}. Our main ingredient is Theorem \ref{Cadoret-Moonen proposition} of Cadoret and Moonen; we remark that the results in \cite{cadoret_moonen} builds on the Mumford-Tate conjecture for K3 surfaces, whose proof uses the Tate-conjecture for divisors of abelian varieties in a crucial way (see \cite{zbMATH01181394} and \cite{zbMATH00931955}). Thus, our finiteness is not independent from Faltings' celebrated work \cite{zbMATH03944027}.

We shall use the following notation: let $V$ be a rational vector space, and let $G \subset \Gl(V)$ be a linear algebraic group. For any maximal rank free $\Z$ module $L \subset V$ we denote by $\mathbf{G}_L$ the integral form of $G$ induced by $L$. This is constructed as the Zariski closure of $G$ seen inside the generic fibre of $\Gl(L)$. It follows that $\mathbf{G}_L$ is a flat affine group scheme over $\Spec(\Z)$ whose generic fibre recovers $G$. In case $L = \mathrm{T}(X)$ we simply write $\mathbf{G}_X$ for $\mathbf{G}_L$.   

We begin with a lemma:
\begin{lemma} \label{useful variant}
Let $Y \in \mathcal{H}^{\mathrm{low}}_X$ and assume that $X$ can be defined over a field $K$ and that $Y$ can defined over a field $K'$. Choose any Hodge isometry $f \colon \mathrm{T}(Y, \Q) \xrightarrow{\sim} \mathrm{T}(X, \Q)$ such that $f(\mathrm{T}(Y)) \subset \mathrm{T}(X)$. Then, there exists an extension $K'' / K K'$ of degree $[K'' \colon K' K] \leq C_n \cdot C_h$ such that $f(\mathrm{T}(Y))^\vee / \mathrm{T}(X)  \subset \Br(\overline{X})^{G_{K''}}.$
\end{lemma}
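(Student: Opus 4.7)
The plan is to enlarge $KK'$ in two controlled steps---one to make $f$ Galois-equivariant, one to trivialize the action on $\NS(\overline{Y})$---and then propagate the resulting triviality through Nikulin's gluing isomorphism and through $f$ itself.

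Concretely, I would first apply Proposition \ref{prop def hodge isom} to the pair $(Y,X)$, both considered over $KK'$, to obtain an extension $F_1/KK'$ of degree at most $C_h$ over which $f$ is defined. This makes the $\widehat{\Z}$-linear extension of $f$ into a $G_{F_1}$-equivariant map; consequently both $f(\mathrm{T}(Y)) \otimes \widehat{\Z}$ and its dual $f(\mathrm{T}(Y))^\vee \otimes \widehat{\Z}$ are $G_{F_1}$-stable. I would then apply Lemma \ref{lemma Neron-Severi} to $Y/F_1$ to obtain $K''/F_1$ of degree at most $C_n$ over which $G_{K''}$ acts trivially on $\NS(\overline{Y})$, yielding $[K'':KK'] \leq C_n \cdot C_h$.

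It remains to transfer the triviality. Nikulin's isomorphism \eqref{Nikulin's isomorphism} identifies $A_Y$ with $\NS(\overline{Y})^\vee/\NS(\overline{Y})$ up to the sign of the quadratic form; since this identification is obtained by gluing inside the ambient unimodular lattice $\mathrm{H}^2_{\et}(\overline{Y},\widehat{\Z})(1)$, it is $G_{K''}$-equivariant, and triviality on $\NS(\overline{Y})$ passes to triviality on $A_Y$. Because $f$ is defined over $K''$, it induces a $G_{K''}$-equivariant isometry $A_Y \xrightarrow{\sim} f(\mathrm{T}(Y))^\vee/f(\mathrm{T}(Y))$. Both $\mathrm{T}(X)$ and $f(\mathrm{T}(Y))$ are $G_{K''}$-stable lattices inside $\mathrm{T}(\overline{X},\Q)$, so the natural surjection $f(\mathrm{T}(Y))^\vee/f(\mathrm{T}(Y)) \twoheadrightarrow f(\mathrm{T}(Y))^\vee/\mathrm{T}(X)$ is $G_{K''}$-equivariant, and the triviality propagates; this is exactly the assertion of the lemma once the quotient is identified with the corresponding subquotient of $\Br(\overline{X})$.

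The only delicate point---the ``hard part''---is the bookkeeping of Galois-equivariance at each transfer: for Nikulin's isomorphism, and for the map on discriminant forms induced by $f$. Both follow from naturality (everything lives inside $G_{K''}$-stable data), but they must be spelled out. Everything else in the argument is a direct application of Proposition \ref{prop def hodge isom} and Lemma \ref{lemma Neron-Severi}.
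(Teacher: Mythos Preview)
Your proposal is correct and follows the same strategy as the paper: pass to an extension of degree at most $C_n\cdot C_h$ over which both $f$ and $\NS(\overline{Y})$ are defined, then deduce Galois-triviality on the relevant quotient. The paper's proof is a one-line statement of exactly this, whereas you spell out the propagation of triviality through Nikulin's gluing and the surjection $f(\mathrm{T}(Y))^\vee/f(\mathrm{T}(Y)) \twoheadrightarrow f(\mathrm{T}(Y))^\vee/\mathrm{T}(X)$; your added detail is correct and fills in what the paper leaves implicit.
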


\begin{proof}
In fact, let $K''/ K' K$ be an extension of degree $[K'' \colon K'K] \leq C_h \cdot C_n$ over which both $\NS(\overline{Y})$ and $f$ are defined. It follows that $f(\mathrm{T}(Y))^\vee / \mathrm{T}(X) \subset \Br(\overline{X})^{G_K}$.
\end{proof}
 The next is a direct consequence of Theorem \ref{integral mt conj}, see also Corollary 5.3. of \cite{MR3830546}.
\begin{prop} \label{Cadoret-Moonen proposition}
Let $X$ be a K3 surface defined over a number field $K$ and let $N>0$ be an integer. There exists a constant $C = C(N,X)$ such that for any field extension $K'/K$ of degree bounded by $N$, one has $$|\Br(\bar{X})^{G_{K'}}| \leq C. $$
\end{prop}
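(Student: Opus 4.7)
The plan is to combine Cadoret--Moonen's Theorem \ref{integral mt conj} with two classical facts: that open compact subgroups of adelic points of linear algebraic groups over $\Q$ are topologically finitely generated, and that for such a subgroup $H \subset \MT_X(\A_f)$ the group of invariants $\Br(\overline{X})^H$ is automatically finite. The rest is a pigeonhole argument on a finite list.

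First, applying Theorem \ref{integral mt conj}, I fix a finite extension $L/K$ (depending only on $X$) such that $H_0 := \rho_X(G_L)$ is an open compact subgroup of $\MT_X(\A_f)$. For any extension $K'/K$ with $[K':K] \leq N$, the compositum $K'L$ satisfies $[K'L : L] \leq N$, so $H := \rho_X(G_{K'L})$ is open in $H_0$ of index at most $N$. The inclusion $G_{K'L} \subseteq G_{K'}$ gives $\Br(\overline{X})^{G_{K'}} \subseteq \Br(\overline{X})^H$, reducing the problem to bounding $|\Br(\overline{X})^H|$ uniformly over open subgroups $H \subseteq H_0$ of index at most $N$.

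Second, as $H_0$ is topologically finitely generated (this is the classical finiteness property of adelic groups alluded to in the introduction), only finitely many of its open subgroups have index at most $N$. It therefore suffices to show that $|\Br(\overline{X})^H|$ is finite for every individual such $H$, and take $C$ to be the maximum over this finite list (which depends on $N$ and on $X$, via the choice of $H_0$).

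Third, for the finiteness of $\Br(\overline{X})^H$: since $H$ is open in $\MT_X(\A_f)$, its projection to each $\MT_X(\Q_p)$ is open and therefore Zariski dense in $\MT_X$. Combined with the irreducibility of $\mathrm{T}(X_\C, \Q)$ as a $\MT_X$-module (valid for every projective K3), this forces $\mathrm{T}(\overline{X},\A_f)^H = 0$. The long exact sequence of continuous $H$-cohomology applied to
\[
0 \longrightarrow \mathrm{T}(\overline{X}, \widehat{\Z})^\vee \longrightarrow \mathrm{T}(\overline{X}, \widehat{\Z})^\vee \otimes \Q \longrightarrow \Br(\overline{X}) \longrightarrow 0
\]
then yields an embedding $\Br(\overline{X})^H \hookrightarrow H^1_{\mathrm{cts}}(H, \mathrm{T}(\overline{X}, \widehat{\Z})^\vee)$, and the target is finite by the standard vanishing/finiteness of continuous cohomology of compact $p$-adic Lie groups applied prime by prime (with vanishing at almost every $p$ since $H_{0,p}$ is a hyperspecial--like subgroup there).

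The main obstacle is the topological finite generation of $H_0$, which underlies the fact that there are only finitely many subgroups of index $\leq N$: this needs an appeal to the theory of adelic algebraic groups (e.g.\ via strong approximation on $\MT_X^{\mathrm{der}}$ and the finite generation of arithmetic lattices, together with the fact that $\MT_X(\Z_p)$ is generated by a bounded number of elements for almost all $p$). Once this is in place, the rest of the argument is a direct application of Theorem \ref{integral mt conj}.
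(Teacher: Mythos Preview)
Your proof is correct and follows essentially the same route as the paper's: invoke Theorem \ref{integral mt conj} to obtain an open image, observe that there are only finitely many open subgroups of index at most $N$, and check that the Brauer invariants under each such subgroup are finite. The only cosmetic differences are that the paper works directly inside the fixed group $\mathbf{MT}_X(\widehat{\Z})$ rather than passing to $\rho_X(G_L)$, and simply asserts the finiteness of $(\mathrm{T}(X)^\vee \otimes \Q/\Z)^\Gamma$ for finite-index $\Gamma$ instead of going through your $H^1$ argument.
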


\begin{proof}
Let $\mathbf{MT}_X$ be the integral form of $\MT_X$ induced by the lattice $\mathrm{T}(X) \subset \mathrm{T}(X,\Q)$. Hence,  $\mathbf{MT}_X(\widehat{\Z})$ naturally acts on $\mathrm{T}(X)^\vee \otimes \Q / \Z.$ If $\Gamma \subset \mathbf{MT}_X(\widehat{\Z})$ is a finite-index group, then the fixed points $(\mathrm{T}(X)^\vee \otimes \Q / \Z)^\Gamma$ are finitely many. Moreover, it is well-known that the group $\mathbf{MT}_X(\widehat{\Z})$ has only finitely many subgroups of a certain index, and for each $M>0$ we define $$C(X,M) \coloneqq \max \{|(\mathrm{T}(X)^\vee \otimes \Q / \Z)^\Gamma| \colon \Gamma \,\, \text{has index} \,\, \leq M \}.$$ Let now $\rho_X \colon G_K \rightarrow \Gl(\mathrm{T}(X,\Q))(\A_f) \cong \Gl_X(\A_f)$ be the Galois representation associated to $T(\overline{X},\A_f)$. Then, Theorem \ref{integral mt conj} says that the index $$M \coloneqq [\mathbf{MT}_X(\widehat{\Z}) \colon \rho_X^\vee(G_{K'}) \cap \mathbf{MT}_X(\widehat{\Z}) ]$$ is finite. For any extension $K'/K$ of degree $[K' \colon K] \leq N$ we then have
$$|\Br(\bar{X})^{G_{K'}}| = | (\mathrm{T}(\overline{X})^\vee \otimes \Q / \Z)^{\rho_X^\vee(G_{K'})}| \leq | (\mathrm{T}(\overline{X})^\vee \otimes \Q / \Z)^{\rho_X^\vee(G_{K'}) \cap \mathbf{MT}_X(\widehat{\Z}) }|,$$
and this last quantity is always smaller than $C(X,NM)$.
\end{proof}
 We recall that $\mathcal{H}^{\mathrm{low}}_X$ was defined in \ref{subsection constructing from Brauer}. 
\begin{prop} \label{Finiteness I}
Let $N>0$ be an integer and let $X/ \overline{\Q}$ be a K3 surface. Then, only finitely many elements of $\mathcal{H}^{\mathrm{low}}_X$ can be defined over some number field $K$ of degree $[K \colon \Q] \leq N.$ 
\end{prop}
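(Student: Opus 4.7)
My plan is to reduce the statement to the uniform bound on Galois-fixed Brauer classes given by Proposition \ref{Cadoret-Moonen proposition}. Suppose $Y \in \mathcal{H}^{\mathrm{low}}_X$ is defined over a number field $K$ with $[K \colon \Q] \leq N$, and fix a model of $X$ over some number field $K_0$. Choose a Hodge isometry $f \colon \mathrm{T}(Y,\Q) \to \mathrm{T}(X,\Q)$ with $T' \coloneqq f(\mathrm{T}(Y)) \subset \mathrm{T}(X)$. The chain of inclusions $T' \subset \mathrm{T}(X) \subset \mathrm{T}(X)^\vee \subset (T')^\vee$, combined with the identification $\Br(\overline{X}) \cong (\mathrm{T}(X) \otimes \Q)/\mathrm{T}(X)$, exhibits $B_Y \coloneqq (T')^\vee / \mathrm{T}(X)$ as a finite subgroup of $\Br(\overline{X})$. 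A short exact sequence computation gives $|B_Y| = [\mathrm{T}(X) \colon T'] \cdot |A_X|$, so bounding $|B_Y|$ is equivalent to bounding the index $[\mathrm{T}(X) \colon T']$.

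By Lemma \ref{useful variant} applied to the pair $K_0, K$, there exists an extension $K''/K_0 K$ of degree at most $C_n \cdot C_h$ such that $B_Y \subset \Br(\overline{X})^{G_{K''}}$. Since $[K'' \colon \Q] \leq [K_0 \colon \Q] \cdot N \cdot C_n C_h$, a degree bound depending only on $X$ and $N$, Proposition \ref{Cadoret-Moonen proposition} yields a uniform bound $|B_Y| \leq C(X,N)$. Consequently, the index $[\mathrm{T}(X) \colon T']$ is bounded by some integer $M = M(X,N)$, which forces $M! \cdot \mathrm{T}(X) \subset T' \subset \mathrm{T}(X)$. Since $\mathrm{T}(X)/(M! \cdot \mathrm{T}(X))$ is a finite abelian group, only finitely many sublattices $T'$ can occur.

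Finally, each such $T'$ inherits its Hodge structure from $\mathrm{T}(X,\Q)$, and by Proposition \ref{proposition L_K3} together with the fact, recalled just after the definition of $\mathcal{T}$, that the map $\mathcal{T} \colon \mathcal{H}_X \to U_X \backslash \mathcal{L}_{K3}$ has finite fibres, the $U_X$-orbit of $T'$ determines $Y$ up to the finite ambiguity of Fourier--Mukai partnership. Hence only finitely many $Y \in \mathcal{H}^{\mathrm{low}}_X$ can be defined over a number field of degree at most $N$.

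The only substantive step is the Cadoret--Moonen uniform bound, which itself rests on the Mumford--Tate conjecture for K3 surfaces; the translation between the lattice index $[\mathrm{T}(X):T']$ and the size of a Galois-stable finite subgroup of $\Br(\overline{X})$ via Lemma \ref{useful variant} is the conceptual bridge, while the remaining lattice enumeration and the passage from lattices to K3 surfaces via global Torelli are routine.
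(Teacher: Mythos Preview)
Your proof is correct and follows essentially the same route as the paper's: both use Lemma~\ref{useful variant} to trap the subgroup $f(\mathrm{T}(Y))^\vee/\mathrm{T}(X)$ inside $\Br(\overline{X})^{G_{K''}}$ for an extension $K''$ of uniformly bounded degree, then invoke Proposition~\ref{Cadoret-Moonen proposition} to bound the index $[\mathrm{T}(X):f(\mathrm{T}(Y))]$, and conclude by the finiteness of sublattices of bounded index together with the finite fibres of $\mathcal{T}$. The only cosmetic difference is that the paper phrases this as a proof by contradiction (infinitely many $X_i$ force the indices to be unbounded, contradicting the Brauer bound), whereas you argue directly; the content is identical.
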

\begin{proof}
Assume that $X$ is defined over a number field $K$ and suppose that there were infinitely many, pairwise distinct K3 surfaces $X_i \in \mathcal{H}^{\mathrm{low}}_X$, for $i \in \N$, that can be defined over field extensions $K_i/ K$ of degree $[K_i \colon K] \leq N$. 
The index of $\mathrm{T}(X_i)$ in $\mathrm{T}(X)$ does not depend on a chosen rational Hodge isometry. In fact, for any inclusion of lattices $N \subset L$ of finite index, one has $\mathrm{discr}(N) = \mathrm{discr}(N) \cdot [L \colon N]^2$. From the fact that $\mathrm{T}(X)$ has only finitely many subgroups of a given index, and that a K3 surface has only finitely many Fourier-Mukai partners, we conclude that the index of $\mathrm{T}(X_i)$ in $\mathrm{T}(X)$ must be unbounded for $i \in \N$. 
For any $i$, denote by $\tilde{X}_i$ a chosen model over $K_i$ and also choose a Hodge isometry $f_i \colon \mathrm{T}(X_{i}) \rightarrow \mathrm{T}(X)$ such that $f_i(\mathrm{T}(X_{i})) \subset  \mathrm{T}(X)$, and let $K_{i}'/ K_{i}$ be an extension of universally bounded degree over which both $\NS(\overline{X}_i)$ and $f_i$ are defined. Write $A_i \subset \Br(X)$ for the group $f_{i}(\mathrm{T}(X_{i}))^\vee / \mathrm{T}(X)^\vee$. From Lemma \ref{useful variant} we see that $$|\Br(\overline{X})^{G_{K_i'}}| \geq |A_i| = [\mathrm{T}(X) \colon f_i(\mathrm{T}(X_{i}))].$$ But since the indices $[\mathrm{T}(X) \colon f_i(\mathrm{T}(X_{i}))]$ are unbounded for $i \in \N$, we find a contradiction to Proposition \ref{Cadoret-Moonen proposition}.
\end{proof}

\begin{prop}
The two following assertions are equivalent. 
\begin{enumerate}
    \item Let $X/\overbar{\Q}$. Then, only finitely many elements of $\mathcal{H}_X$ can be defined over a field extension $K/\Q$ of a given degree.
    \item Let $X/\overbar{\Q}$. Then, only finitely many elements of $\mathcal{H}^{\mathrm{max}}_X$ can be defined over a field extension $K/\Q$ of a given degree.
\end{enumerate}
\end{prop}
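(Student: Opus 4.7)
The implication $(1) \Rightarrow (2)$ is immediate since $\mathcal{H}^{\mathrm{max}}_X \subset \mathcal{H}_X$. My plan for $(2) \Rightarrow (1)$ is to associate to each $Y \in \mathcal{H}_X$ a maximal K3 surface $Y^{\max} \in \mathcal{H}^{\max}_X$ containing $\mathrm{T}(Y)$, to show via Theorem \ref{main theorem} that $Y^{\max}$ descends to essentially the same field as $Y$, and finally to use Proposition \ref{Finiteness I} to bound the number of $Y$ sharing a given $Y^{\max}$.

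Starting from $N>0$ and a $Y \in \mathcal{H}_X$ defined over $K$ with $[K \colon \Q] \leq N$, the first step is to enlarge $K$ by the universally bounded degree of Lemma \ref{lemma Neron-Severi} so that $\NS(\overline{Y})$ is $G_K$-invariant. Applying the easy direction of Theorem \ref{main theorem} to the triple $(Y, \id, \NS(Y))$ then forces $G_K$ to act trivially on the discriminant form $A_Y$. Using the fact that every lattice in $\mathcal{L}$ is contained in some maximal one, I would next choose a maximal overlattice $\tilde{L} \subset \mathrm{T}(\overline{Y}, \Q)$ of $\mathrm{T}(\overline{Y})$; by Proposition \ref{proposition L_K3} it corresponds to a K3 surface $Y^{\max} \in \mathcal{H}^{\max}_X$ together with a Hodge isometry $g$ identifying $\mathrm{T}(Y^{\max})$ with $\tilde{L}$.

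The crucial point is that both hypotheses of Theorem \ref{main theorem}, applied now with $Y$ and $Y^{\max}$ playing the roles of $X$ and $Y$ respectively, collapse to the triviality of the $G_K$-action on $A_Y$. Indeed, $\tilde{L}/\mathrm{T}(\overline{Y})$ is an isotropic subgroup $H \subset A_Y$ stabilized pointwise by $G_K$, so its preimage $\tilde{L} \otimes \widehat{\Z}$ inside $\mathrm{T}(\overline{Y}, \widehat{\Z})^{\vee}$ is $G_K$-stable (condition (1)); moreover $A_{Y^{\max}} \cong H^{\perp}/H$ is a $G_K$-equivariant subquotient of $A_Y$, hence carries a trivial action (condition (2)). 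Theorem \ref{main theorem} then produces a model of $Y^{\max}$ over $K$.

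To conclude, each $Y \in \mathcal{H}_X$ definable over a field of degree $\leq N$ is associated to some $Y^{\max} \in \mathcal{H}^{\max}_X$ definable over a field of degree at most $N \cdot C_n$, and by construction $Y$ lies in $\mathcal{H}^{\mathrm{low}}_{Y^{\max}}$ via $g^{-1}$. Hypothesis (2) bounds the number of such $Y^{\max}$, and for each of them Proposition \ref{Finiteness I} bounds the number of elements of its lower isogeny class definable over fields of degree $\leq N \cdot C_n$. The main obstacle is precisely the descent step for $Y^{\max}$, but, as explained, it reduces cleanly to the Galois-triviality of $A_Y$ that we already extracted from the definition of $Y/K$, so no further arithmetic input is required.
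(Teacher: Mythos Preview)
Your proof is correct and follows essentially the same approach as the paper: both pass from an arbitrary $Y\in\mathcal{H}_X$ to a maximal overlattice of $\mathrm{T}(\overline{Y})$, use the triviality of the $G_K$-action on $A_Y$ (coming from $\NS(\overline{Y})^{G_K}=\NS(\overline{Y})$) to verify the two hypotheses of Theorem~\ref{main theorem} for the associated maximal K3 surface, and then conclude via hypothesis (2) together with Proposition~\ref{Finiteness I}. Your write-up is in fact more explicit than the paper's about why $\tilde{L}\otimes\widehat{\Z}$ is $G_K$-stable and why $A_{Y^{\max}}\cong H^{\perp}/H$ carries a trivial Galois action, which the paper leaves as ``one shows that''.
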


\begin{proof}
Clearly (1)$\implies$(2). To show the reverse implication one argues as follows. Suppose that (2) is true but (1) is not. Then, there are K3 surfaces $X_i / K_i$ defined over fields $K_i$ such that $[K_i \colon \Q]$ is bounded and $\overline{X}_i \in \mathcal{H}_X$ are pairwise disjoint. We can assume without loss of generality that $G_{K_i}$ acts trivially on $\NS(\overline{X}_i)$. Let $\mathrm{T}(X_{i, \C}) \subset \mathrm{T}_i$ be a maximal lattice containing $\mathrm{T}(X_{i, \C})$. Since $G_{K_i}$ acts trivially on $A_{X_{i}}$ one shows that $G_{K_{i}}$ stabilizes $\mathrm{T}_i \otimes \widehat{\Z}$ and that moreover it acts trivially on $A_{\mathrm{T}_i}$. It follows that there are K3 surfaces $(Y_i, f_i, \NS(Y_i))$ defined over $K_i$ with a Hodge isometry $f_i \colon \mathrm{T}(Y_{i, \C}, \Q) \rightarrow \mathrm{T}(X_{i, \C}, \Q)$ such that $f_i(\mathrm{T}(Y_{i, \C})) = \mathrm{T}_i$. In particular, these are maximal K3 surfaces, and by $(2)$, they can be only finitely many geometrically. The results now follow from \ref{Finiteness I}. 
\end{proof}
Up until now, our results work for every K3 surface. In the next theorem we specialize to the generic case when $\End_{\Hdg}(\mathrm{T}(X)) = \Z$. 
\begin{thm} \label{finiteness maximal}
Let $N>0$ be an integer and assume that  $\End_{\Hdg}(\mathrm{T}(X)) = \Z$. Then, only finitely many elements of $\mathcal{H}_{X}^{\mathrm{max}}$ can be defined over a field extension $K/\Q$ of degree $[K \colon \Q] \leq N$. 
\end{thm}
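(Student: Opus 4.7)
The plan begins with the observation that the hypothesis $\End_{\Hdg}(\mathrm{T}(X)) = \Z$ forces $E_X = \Q$, so $\MT_X = \mathrm{SO}(\mathrm{T}(X,\Q)) =: \mathrm{SO}_X$ and $U_X = \{\pm 1\}$. Since $\pm 1$ acts trivially on lattices, the map $\mathcal{T} \colon \mathcal{H}^{\mathrm{max}}_X \to \mathcal{L}^{\mathrm{max}}$ has only finite Fourier--Mukai fibres, so we reduce to bounding the number of $L \in \mathcal{L}^{\mathrm{max}}$ arising as $f(\mathrm{T}(Y))$ for some $Y \in \mathcal{H}^{\mathrm{max}}_X$ defined over a number field of degree $\leq N$. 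Using Lemma \ref{lemma Neron-Severi}, Corollary \ref{corollary field defi hodge end}, and Proposition \ref{prop def hodge isom}, after enlarging $N$ by a universal constant we fix a number field $K_0$ over which $X$ and $\NS(\overline{X})$ are defined and such that $\rho_X(G_{K_0}) \subset \mathrm{SO}_X(\A_f)$. Applying Theorem \ref{main theorem} to any maximal overlattice $L_0 \supset \mathrm{T}(X)$---automatically $G_{K_0}$-stable, with discriminant form $A_{L_0}$ a subquotient of the $G_{K_0}$-trivial module $A_X$---produces a maximal K3 surface defined over $K_0$ and isogenous to $X$; replacing $X$ by this surface, we may assume $X$ itself is maximal. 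Then Theorem \ref{integral mt conj} of Cadoret--Moonen gives that $\Delta := \rho_X(G_{K_0})$ is a compact open subgroup of $\mathrm{SO}_X(\A_f)$ contained in $\Gamma_0 := \mathrm{Stab}_{\mathrm{SO}_X(\A_f)}(\mathrm{T}(X,\widehat{\Z}))$.

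Next, for any $Y \in \mathcal{H}^{\mathrm{max}}_X$ defined together with $\NS(Y)$ and the Hodge isometry $f$ over a field $K_Y \supseteq K_0$ of degree $\leq N$, Corollary \ref{Proposition easier for genus}(1) gives that $L_Y := f(\mathrm{T}(Y)) \otimes \widehat{\Z}$ is stable under $U_Y := \rho_X(G_{K_Y}) \subset \Delta$, which is an open subgroup of index $\leq N$. Since a compact topological group admits only finitely many open subgroups of a given index, $U_Y$ ranges over a finite collection of subgroups of $\Delta$ as $Y$ varies, and it suffices to prove that for each fixed open subgroup $U \subset \Delta$ of finite index, only finitely many $L \in \mathcal{L}^{\mathrm{max}}$ are $U$-stable.

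For this last step, using the $\mathrm{SO}_X(\A_f)$-transitivity on $\mathcal{L}^{\mathrm{max}}$ (Remark \ref{remark on class and genus}) we write every $L \in \mathcal{L}^{\mathrm{max}}$ as $g \cdot \mathrm{T}(X,\widehat{\Z})$ with stabilizer $g\Gamma_0 g^{-1}$, so $U$-stability becomes $U \subset g\Gamma_0 g^{-1}$. Since $U$ has finite index in $\Gamma_0$, at all but finitely many primes $p$ one has $U_p = \Gamma_{0,p}$, at which any conjugate of $\Gamma_{0,p}$ in $\mathrm{SO}_X(\Q_p)$ containing $\Gamma_{0,p}$ must equal it (by equal Haar measure). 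At the finitely many remaining primes, $U_p$ fixes a bounded subcomplex of the Bruhat--Tits building of $\mathrm{SO}_X(\Q_p)$, so only finitely many vertices of the type of $\Gamma_{0,p}$ are $U_p$-fixed, yielding finitely many local conjugates of $\Gamma_{0,p}$ containing $U_p$. Finally, the map $L \mapsto \mathrm{Stab}(L)$ has fibres of cardinality $|N_{\mathrm{SO}_X(\A_f)}(\Gamma_0)/\Gamma_0|$, finite since $\mathrm{SO}_X$ is semisimple with finite centre. The main obstacle will be this local Bruhat--Tits input, controlling the conjugates of the local maximal compact subgroup that contain a given bounded-index compact open; this is what replaces the use of Masser--W\"{u}stholz isogeny estimates in the classical proof via abelian varieties.
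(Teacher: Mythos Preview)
Your proof is correct and follows essentially the same route as the paper: reduce to $X$ maximal, invoke Cadoret--Moonen to get a fixed compact open subgroup $\mathcal{V}$ (your $U$) of $\mathrm{SO}_X(\A_f)$ that must lie in the stabilizer of every relevant maximal lattice, and then show that only finitely many conjugates $g\,\mathbf{SO}_X(\widehat{\Z})\,g^{-1}$ can contain $\mathcal{V}$, finishing with the finiteness of $\mathcal{N}/\mathbf{SO}_X(\widehat{\Z})$ for semisimple $\mathrm{SO}_X$. The only cosmetic difference is that for the key local finiteness you phrase things in terms of the bounded fixed-point set of a compact open subgroup in the Bruhat--Tits building, whereas the paper quotes \cite[Proposition~3.16]{zbMATH00052370} directly (a compact open subgroup lies in only finitely many maximal compact opens); these are equivalent inputs.
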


\begin{proof}
We can assume without loss of generality that $X$ is maximal.
As a consequence of Shimura characterization of maximal lattices there is an isomorphism $$\mathrm{SO}_X(\A_f) / \mathbf{SO}_X(\widehat{\Z}) \xrightarrow{\sim} \mathcal{L}^{\text{max}}.$$ 
Assume that $X$ can be defined over a number field $M/\Q$, and assume without loss of generality that $\rho_X(G_M) \subset \mathrm{SO}_X(\A_f)$ (this can always be achieved after a finite extension). Then by \ref{Cadoret-Moonen proposition} $\rho_X(G_M)$ is a compact-open subgroup. Let $C_h>0$ be the constant from Proposition \ref{prop def hodge isom}, and let $\mathcal{V} \subset \rho_X(G_M)$ be the intersection of all the subgroups of $\rho_X(G_M)$ of index smaller than $C_h \cdot N$. Since there are only finitely many such subgroups, also $\mathcal{V}$ is a compact-open subgroup of $ \mathrm{SO}_X(\A_f)$. 
Let now $(Y,f) \in \tilde{\mathcal{H}}^\mathrm{max}$, so that $f(\mathrm{T}(Y)) \in \mathcal{L}^\mathrm{max}$, and assume that $Y$ can be defined over an extension $K/\Q$ of degree $[K \colon \Q] \leq N$. Then, there exists an extension $K'/KM$ of degree $[K' \colon KM] \leq C_h$ over which $f$ is defined too. Since $f$ is defined over $K'$, it follows that $f(T(\overline{Y}, \widehat{\Z}))$ is $G_{K'}$-stable, hence that $\mathcal{V} \subset \mathbf{SO}_Y(\widehat{\Z})$. Thus, to prove the statement, it suffices to show that only finitely many elements $g \in \mathrm{SO}_X(\A_f) / \mathbf{SO}_X(\widehat{\Z})$ satisfy 
\begin{equation} \label{equation that fails for E}
    \mathcal{V} \subset \mathrm{SO}_{g\mathrm{T}(X)}(\widehat{\Z}) = g \mathrm{SO}_X(\widehat{\Z}) g^{-1}.
\end{equation} 
A compact-open subgroup $\mathcal{U} \subset \mathrm{SO}_X(\A_f)$ is said to be maximal if it is not properly contained in another compact-open subgroup. A similar definition applies to compact open subgroups $\mathcal{U}_p \subset \mathrm{SO}_X(\Q_p)$. 

\begin{lemma}
If $\mathcal{U} \subset \mathrm{SO}_X(\A_f)$ is any compact open subgroup, and $\mathcal{U}_p \subset \mathrm{SO}_X(\Q_p)$ denotes its $p$-component, then $\mathcal{U}_p$ is maximal for almost every prime $p$.
\end{lemma}
\begin{proof}
Consider the natural embedding $\mathbf{SO}_X \subset \mathbf{GL}_X$. By the definition of the adelic topology, for almost every $p$ we have that $\mathcal{U}_p = \mathbf{GL}_X(\Z_p) \cap \mathbf{SO}_X(\Q_p)$. If moreover $p$ is a prime of good reduction for $\mathbf{SO}_X$, which happens for example whenever $p$ is odd and does not divide $\mathrm{discr}(\mathrm{T}(X))$, then it follows from Bruhat-Tits theory that $\mathbf{GL}_X(\Z_p) \cap \mathbf{SO}_X(\Q_p)$ is maximal (for an elementary proof of this fact one can consult \cite{MR3612325}).  
\end{proof}
Since $\mathrm{SO}_X$ is reductive, \cite[Proposition 3.16]{zbMATH00052370} and the lemma above say that every compact-open subgroup of $ \mathrm{SO}_X(\A_f)$ is contained in at least one maximal compact-open subgroup, and in at most finitely many such subgroups. 
 Assume by contradiction that there are infinitely many $g_i \in \mathrm{SO}_X(\A_f) / \mathbf{SO}_X(\widehat{\Z})$, for $i \in \N$, such that $\mathcal{V} \subset g_i \mathrm{SO}_X(\widehat{\Z}) g_{i}^{-1}$. Since $\mathcal{V}$ is contained in only finitely many maximal compact-open subgroups, and since every compact-open subgroup of $ \mathrm{SO}_X(\widehat{\Z})$ has only finitely many subgroups of a given index, we can assume without loss of generality that $g_i\mathrm{SO}_X(\widehat{\Z})g_i^{-1} = g_j \mathrm{SO}_X(\widehat{\Z}) g_j^{-1}$ for $i,j \in \mathbf{N}$. Let $\mathcal{N}$ be the normalizer of $\mathrm{SO}_X(\widehat{\Z})$ in $\mathrm{SO}_X(\A_f)$. Then we must have that $g_1 g_i^{-1}$ belongs to the image of $\mathcal{N}$ in $\mathrm{SO}_X(\A_f)/ \mathrm{SO}_X(\widehat{\Z})$ for any $i \in \N$. But since $\mathrm{SO}_X$ is semisimple, from the last part of \cite[Proposition 3.16]{zbMATH00052370} one has that the natural inclusion
    \begin{equation} \label{normalizer finite-index}
        \mathrm{SO}_X(\widehat{\Z}) \subset \mathcal{N}
    \end{equation} is of finite-index. Thus, we find a contradiction to the infinitude of the $g_i$'s.

\end{proof}
 
\begin{rmk} \label{remark problem}
When $E_X \neq \Q$ we do not have anymore the equality \eqref{equation that fails for E} for any $g \in \mathrm{SO}_X(\A_f)$, but only for $g \in \MT_X(\A_f) \subset \mathrm{SO}_X(\A_f)$. Using the same ideas of above and the description of Mumford-Tate groups of K3 surfaces of Zarhin \cite{MR697317}, we can prove the finiteness of the smaller $\mathrm{H}^{\MT_X}_X \subset \mathrm{H}_X$ given by all the K3 surfaces $Y$ with a Hodge isometry $f \colon \mathrm{T}(Y, \Q) \rightarrow  \mathrm{T}(X, \Q)$ such that $f(\mathrm{T}(Y))$ and $\mathrm{T}(X)$ belong to the same $\MT_X$-genus. 
\end{rmk}

\bibliography{Bibliografia}
\bibliographystyle{alpha}

\end{document}